\newtheorem{thm}{Theorem}[section]
\newtheorem{lem}[thm]{Lemma}
\newtheorem{prop}[thm]{Proposition}
\theoremstyle{definition}
\newtheorem{dft}[thm]{Definition}
\theoremstyle{remark}
\newtheorem{ntn}[thm]{Notation}
\newtheorem{num}[thm]{}
\def\syl{{\rm Syl}}
\def\fk{{\mathrm{k}}}
\def\ind{{\rm Ind}}
\def\Bb{{\mathcal B}}
\def\Cc{{\mathcal C}}
\def\Dd{{\mathcal D}}
\def\Ss{{\mathcal S}}
\def\wBb{\widehat{\mathcal B}}
\def\wCc{\widehat{\mathcal C}}
\def\wSs{\widehat{\mathcal S}}
\def\tCc{\widetilde{\mathcal C}}
\def\tSs{\widetilde{\mathcal S}}
\def\tH{\widetilde{H}}
\def\tL{\widetilde{L}}
\begin{document}

\title[Vertices of Lefschetz modules]{On the vertices of indecomposable summands of certain Lefschetz modules}

\author{John Maginnis}
\author{Silvia Onofrei}

\address{Department of Mathematics Department, Kansas State University, 137 Cardwell Hall, Manhattan, Kansas 66506}
\email{maginnis@math.ksu.edu}

\address{Department of Mathematics, The Ohio State University, 100 Mathematics Tower, 231 West $18^{\rm th}$ Avenue, Columbus, Ohio 43210}
\email{onofrei@math.ohio-state.edu}

\date{2 September 2011}

\thanks{The research of the second author was partially supported by the National Security Agency under grant number H98230-10-1-0174.}

\subjclass{20J05, 20C05, 20C20, 20C34, 20D08, 51E24, 51D20}
\maketitle

\begin{abstract}
We study the reduced Lefschetz module of the complex of $p$-radical and $p$-centric subgroups. We assume that the underlying group $G$ has parabolic characteristic $p$ and the centralizer of a certain noncentral $p$-element has a component with central quotient $H$ a finite group of Lie type in characteristic $p$. A non-projective indecomposable summand of the associated Lefschetz module lies in a non-principal block of $G$ and it is a Green correspondent of an inflated, extended Steinberg module for a Lie subgroup of $H$. The vertex of this summand is the defect group of the block in which it lies.
\end{abstract}

%%%%%%%%%%%%%%%%%%%%%%%%%%%%%%%%%%%%%%%%%%%
\section{Introduction}%%%%%%%%%%%%%%%%%%%%%
%%%%%%%%%%%%%%%%%%%%%%%%%%%%%%%%%%%%%%%%%%%

The best known example of a reduced Lefschetz module is the Steinberg module for a finite simple group of Lie type, with coefficients having the defining characteristic. This modular representation is both irreducible and projective. The Steinberg module equals the top homology module of the associated Tits building, but can also be described using an alternating sum of the modules of the chain complex of the building (and it is this type of virtual module which we will refer to as a Lefschetz module).

Another important example is the Brown complex of an arbitrary finite group, with simplices the chains of inclusions of nontrivial $p$-subgroups for a fixed prime $p$. The reduced Lefschetz module of the Brown complex, with coefficients in a field of characteristic $p$, although not necessarily indecomposable is always projective both on the level of characters in the Grothendieck ring \cite[Corollary 4.3]{qu78} and as a virtual module in the Green ring \cite{wb87a}. More generally, Webb \cite[Theorem $A'$]{wb87a} proved that if the fixed point sets of elements of order $p$ are contractible, for an admissible action of a finite group on a finite simplicial complex, then the reduced Lefschetz module is a projective virtual module (he worked with $p$-adic integers as coefficients).

Ryba, Smith and Yoshiara \cite{rsy} applied Webb's Theorem to various geometries admitting a flag transitive action by a sporadic or alternating group, or such geometries other than the building for Lie type groups.  Their Table 1 summarizes information on 18 cases where the reduced Lefschetz module is projective, including descriptions of many indecomposable summands as projective covers of irreducible modules. Later work of Smith and Yoshiara \cite{sy} used a different approach, verifying equivariant homotopy equivalences between such sporadic geometries and the simplicial complex for the Quillen poset of nontrivial elementary abelian $p$-subgroups.

Other subgroup complexes have been constructed with applications in representation theory and to homology decompositions of classifying spaces; see for example \cite{dwh}. The collection of nontrivial $p$-centric and $p$-radical subgroups $\Dd_p(G)$ is often regarded as the best candidate for a $p$-local geometry and a generalization of the notion of building to general finite groups.

A comprehensive study of $2$-local sporadic geometries (in which all the vertex stabilizers are $2$-local subgroups) with applications to mod-$2$ cohomology can be found in Benson and Smith \cite{bs04}. In many cases, notably when the group $G$ has parabolic characteristic $2$ (see Section $2$ for an explanation of this term), the $2$-local geometries studied by Benson and Smith are $G$-homotopy equivalent with the complex of $2$-centric and $2$-radical subgroups.

One of the first to study nonprojective reduced Lefschetz modules,  Sawabe \cite[Prop. 5]{sa06} determined an upper bound for the orders of vertices of their indecomposable summands. Further results regarding the Lefschetz characters associated to several $2$-local sporadic geometries were obtained by Grizzard \cite{gri}. He studied the distribution into blocks of the indecomposable summands, for several sporadic groups whose $2$-local geometries have nonprojective Lefschetz characters.

Since the properties of the reduced Lefschetz modules are strongly influenced by the group structure and, in particular by the centralizers of $p$-elements, it seems natural to study these properties under appropriately chosen conditions imposed on the $p$-local structure of $G$. A $p$-central element is a nontrivial element of $G$ that lies in the center of a Sylow $p$-subgroup of $G$. In \cite{mgo3} we introduced distinguished collections of $p$-subgroups, those $p$-subgroups of some standard collection that contain $p$-central elements in their centers. They have similar properties as their standard counterparts. If $G$ has parabolic characteristic $p$, then the distinguished collection of $p$-radical subgroups $\wBb_p(G)$ is equal to the collection of $p$-centric and $p$-radical subgroups $\Dd_p(G)$.

In the present paper we study the reduced Lefschetz module $\tL=\tL_G(\wBb_p(G); \fk) = \tL_G(\Dd_p(G); \fk)$ when $G$ has parabolic characteristic $p$. Our approach is to use a theorem of Robinson, quoted in Section $5$ of this paper as Theorem \ref{fixedrob}, which relates the indecomposable summands with vertex $V$ in $\tL_G(\Delta; \fk)$ to the indecomposable summands with vertex $V$ in $\tL_{N_G(V)}(\Delta^V; \fk)$, for any simplicial complex $\Delta$.

We assume for most of this paper that $G$ has parabolic characteristic $p$. We first determine the fixed point sets of $p$-subgroups $T$ acting on $\Delta=\wBb_p(G)$. If the largest normal $p$-subgroup $O_p(T \cdot C_G(T))$ of $T$ times its centralizer contains $p$-central elements then the fixed point set $\Delta^T$ is contractible; this is Theorem \ref{ocintilde}. If $O_p(T \cdot C_G(T))$ does not contain $p$-central elements and if $T \cdot C_G(T)$ has a single component $H$ that also has parabolic characteristic $p$ then the fixed point set is equivariantly homotopy equivalent to the complex of distinguished $p$-radical subgroups in $H$. This result is proved in Theorem \ref{noncenthm}.

Next we study the vertices of the indecomposable summands of $\widetilde{L}$. We focus on the case when the centralizer of certain noncentral $p$-elements has a component $H$ which is a finite group of Lie type in characteristic $p$. We find in Theorem \ref{defT}, that a nonprojective indecomposable summand of $\tL$ is the Green correspondent of an inflated, extended Steinberg module which corresponds to $H$. This summand lies in a non-principal block of the group ring $\fk G$, and has vertex equal to the defect group of the block in which it lies.

We discuss applications to the sporadic simple groups of parabolic characteristic $p$ for $p=2$ and $3$. Our results verifies for these groups a conjecture of Grizzard \cite{gri} and of Benson and Smith \cite[p.164]{bs04} who noticed: {\it``...for all $15$ sporadic groups $G$ ... for which the $2$-local geometry $\Delta$ is not homotopy equivalent to $\Ss_2(G)$, it seems that the reduced Lefschetz module involves an indecomposable in a suitable non-principal $2$-block of $G$ of small positive defect."} Specifically, we give an explanation for $12$ of the sporadic simple groups that have parabolic characteristic $2$.

\medskip
We use the standard notation for finite groups as in the Atlas \cite{atlas}. If $p$ is a prime, then $p^n$ denotes an elementary abelian $p$-group of rank $n$ and $p$ a cyclic group of order $p$. Also $H.K$ denotes an extension of the group $H$ by a group $K$ (it does make no statement if the extension splits or not) and $H:K$ denotes a split extension, while $H ^.K$ indicates a nonsplit extension. Finally $H \cdot K$ denotes the product of two subgroups of a group $G$, with at least one normal in $G$. Regarding the conjugacy classes of elements of order $p$ we follow the Atlas \cite{atlas} notation. The simplified notation of the form $5A^2$ stands for an elementary abelian group $5^2$ whose $24$ nontrivial elements are all of type $5A$. The notation of the form $2A_2B_1$ stands for a group of order $4$ which contains two elements from class $2A$ and one element from class $2B$.

\medskip
The structure of our paper is as follows. In Section $2$ we review properties of groups of local characteristic $p$ and parabolic characteristic $p$, as well as standard facts on collections of $p$-subgroups. We also recollect a few useful results on distinguished collection of $p$-subgroups. In Section $3$ we determine the nature of the fixed point sets under the action of $p$-subgroups. Section $4$ is dedicated to standard facts from modular representation theory. In Section $5$ we prove a theorem that characterizes the Lefschetz module of the collection of $p$-centric and $p$-radical subgroups. Applications to the sporadic simple groups are discussed in Section $6$.

\subsubsection*{Acknowledgements} We would like to thank Ron Solomon for providing us with an elegant argument for the existence of central elements in the $3$-core of the centralizer of an element of type $3D$ in the groups $Fi_{22}$ and $Fi_{23}$ respectively.

%%%%%%%%%%%%%%%%%%%%%%%%%%%%%%%%%%%%%%%%%%%%%%%%%%%%%%%%
\section{Collections of p-subgroups}%%%%%%%%%%%%%%%%%%%%
%%%%%%%%%%%%%%%%%%%%%%%%%%%%%%%%%%%%%%%%%%%%%%%%%%%%%%%%

In this section we introduce the necessary notation, review the basic terminology regarding subgroup complexes and list a few results from the literature which will be used later in the proofs. We assume that $G$ is a finite group and $p$ is a prime dividing its order.

\subsection{Groups of characteristic p and local characteristic p}

The finite groups we are working with in the present paper have properties known as characteristic $p$, local characteristic $p$ and parabolic characteristic $p$. For completeness of our presentation, in this section we define these notions and list some of their features. Most of the results mentioned here are well known and can be found elsewhere in the literature.

If $G$ is a finite group, we denote by $O_p(G)$ the largest normal $p$-subgroup in $G$. A {\it $p$-local subgroup} is the normalizer of a nontrivial $p$-subgroup in $G$.

\begin{dft}\label{defchar}
The group $G$ has {\it characteristic} $p$ if $C_G(O_p(G)) \leq O_p(G)$. If all $p$-local subgroups of $G$ have characteristic $p$ then $G$ has {\it local characteristic} $p$.
\end{dft}

We remark here that the notion of ``local characteristic $p$" is what group theorists usually call ``characteristic $p$-type".

\begin{prop}\label{chaprop}
Let $G$ be a finite group and let $p$ be a prime dividing its order.
\begin{list}{\upshape\bfseries}
{\setlength{\leftmargin}{.8cm}
\setlength{\labelwidth}{1cm}
\setlength{\labelsep}{0.2cm}
\setlength{\parsep}{0.5ex plus 0.2ex minus 0.1ex}
\setlength{\itemsep}{.6ex plus 0.2ex minus 0ex}}
\item[${\rm(i).}$] Assume $G$ has characteristic $p$. If $P$ is a $p$-subgroup of $G$ and $H$ a subgroup of $G$ with $PC_G(P) \le H \le N_G(P)$, then $H$ has characteristic $p$, see \cite[Lem. 1]{sol74}. In particular, $G$ has local characteristic $p$, see \cite[12.6]{gls2b}.
\item[${\rm(ii).}$] Let $P$ be a $p$-subgroup of $G$. The subgroup $C_G(P)$ has characteristic $p$ if and only if $N_G(P)$ has characteristic $p$, see \cite[5.12]{gls2b}.
\end{list}
\end{prop}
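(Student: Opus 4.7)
The plan is to follow the $p$-local arguments of the cited references \cite[Lem. 1]{sol74} and \cite[5.12, 12.6]{gls2b}.

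For part (i), set $Q = O_p(H)$. Since $P \trianglelefteq H$ we have $P \le Q$, and the goal is $C_H(Q) \le Q$. Any $x \in C_H(Q)$ centralizes $P \le Q$ and so lies in $C_G(P) \le H$. Normality of $C_H(Q)$ in $H$ forces $O_p(C_H(Q)) = Z(Q)$, so $C_H(Q)/Z(Q)$ has trivial $O_p$; if this quotient were nontrivial it would contain a $p'$-element, lifting to a nontrivial $p'$-element $y \in C_H(Q)$. I would then apply Thompson's $A \times B$ lemma with $A = P$ and $B = \langle y \rangle$ acting on the $p$-subgroup $M = P \cdot O_p(G)$: the two actions commute because $y \in C_G(P)$, and $y$ centralizes $C_M(P) = Z(P) \cdot C_{O_p(G)}(P)$ (the second factor lies in $O_p(G) \cap H \le O_p(H) = Q$ and is therefore centralized by $y$). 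The lemma forces $y$ to centralize $M$, hence $O_p(G)$, so $y \in C_G(O_p(G)) \le O_p(G)$, contradicting $y$ being a nontrivial $p'$-element. The ``in particular'' clause then follows by specializing to $H = N_G(P)$.

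For part (ii), set $N = N_G(P)$, $C = C_G(P)$, $Q = O_p(N)$ and $R = O_p(C)$. Normality of $C$ in $N$ gives $R \le Q$, and $Q \cap C$ is a normal $p$-subgroup of $C$ lying inside $R$, so $R = Q \cap C = C_Q(P)$. For the forward direction, given $x \in C_N(Q)$, we have $x \in C$ (as $x$ centralizes $P \le Q$) and $x$ centralizes $R \le Q$, so by the characteristic $p$ property of $C$ we get $x \in R \le Q$. For the reverse direction, given $y \in C_C(R)$, Thompson's $A \times B$ lemma applied with $A = P$ and $B$ the cyclic group generated by the $p'$-part of $y$ (acting on $Q$, with $C_Q(A) = R$) forces the $p'$-part of $y$ to centralize $Q$ and hence lie in $C_N(Q) \le Q$, so it is trivial. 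Thus $C_C(R)$ consists of $p$-elements; and since $C_C(R) = C \cap C_N(R)$ is the intersection of two subgroups normal in $N$, it is itself a normal $p$-subgroup of $N$, whence $C_C(R) \le Q \cap C = R$.

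The step I expect to be the main obstacle is verifying the hypotheses of Thompson's $A \times B$ lemma in part (i); in particular, tracking that $y$ centralizes $C_M(P)$ requires the nontrivial containment $O_p(G) \cap H \le O_p(H)$, and it is at this point that the characteristic $p$ hypothesis on $G$ is substantively used.
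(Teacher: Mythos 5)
The paper states Proposition \ref{chaprop} with references only and gives no proof, so I evaluate your argument on its own terms. Your proof of part (ii) is correct.

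In part (i) you use the right tool (Thompson's $A\times B$ lemma), but the crucial step rests on an unjustified identity: you assert $C_M(P) = Z(P)\cdot C_{O_p(G)}(P)$ for $M = P\cdot O_p(G)$. The containment $Z(P)\,C_{O_p(G)}(P)\le C_M(P)$ is clear, but the reverse inclusion is not a formal consequence of $M=PN$ with $N\trianglelefteq M$; already among finite $p$-groups one can have $C_{PN}(P)$ strictly larger than $Z(P)C_N(P)$ when $P\cap N\ne 1$. Since the $A\times B$ lemma requires $B$ to centralize all of $C_M(A)$, showing that $y$ centralizes only the subgroup $Z(P)C_{O_p(G)}(P)$ does not close the argument.

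The repair is easier than the identity you tried to prove: take $M=O_p(G)$ rather than $P\cdot O_p(G)$. Then $A=P$ and $B=\langle y\rangle$ both act on $M$ (since $O_p(G)\trianglelefteq G$), the actions commute because $y\in C_G(P)$, and
\[
C_M(A)=C_{O_p(G)}(P)\le O_p(G)\cap C_G(P)\le O_p(G)\cap H\le O_p(H)=Q,
\]
which is centralized by $y\in C_H(Q)$. The lemma then forces $y$ to centralize $O_p(G)$, so $y\in C_G(O_p(G))\le O_p(G)$, contradicting that $y$ is a nontrivial $p'$-element. (Equivalently, keep $M=P\cdot O_p(G)$ but observe that $C_M(P)=M\cap C_G(P)$ is a $p$-subgroup of $H$ that $H$ normalizes, since $H$ normalizes both $P$ and $O_p(G)$; hence $C_M(P)\le O_p(H)=Q$, which is all the $A\times B$ lemma needs.) With this correction, part (i) is complete.
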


\begin{dft}\label{defpar}
A {\it parabolic subgroup} of $G$ is an overgroup of a Sylow $p$-subgroup of $G$. The group $G$ has {\it parabolic characteristic} $p$ if all $p$-local, parabolic subgroups of $G$ have characteristic $p$.
\end{dft}

We would like to mention that our usage of the term ``parabolic" differs from the classical notion of parabolics, in the special case of a Lie type group over a field of characteristic $p$, for in the latter, the parabolics are actually the overgroups not of just a Sylow $p$-subgroup $S$, but of a full Borel subgroup, i.e. of the normalizer $N_G(S)$. Our notion of ``parabolic characteristic $p$" appears elsewhere in group theory: notably for $p=2$ it appears as ``even characteristic" in Aschbacher and Smith \cite[p.3]{ascsm1}; this provides further evidence for the naturalness of this concept.

\subsection*{Collections of p-subgroups. Notation and known results}

A {\it collection} $\Cc$ of $p$-subgroups of $G$ is a set of $p$-subgroups which is closed under conjugation; a collection is a $G$-poset under the inclusion relation with $G$ acting by conjugation. The {\it order complex} or the {\it nerve} $\Delta(\Cc)$ is the simplicial complex which has as simplices proper inclusion chains from $\Cc$; for a more detailed discussion see \cite[2.7]{bs04}. If we let $|\Delta(\Cc)|$ to denote the geometric realization of the nerve $\Delta(\Cc)$, then the correspondence $\Cc \rightarrow |\Delta(\Cc)|$ allows assignment of topological concepts to posets \cite[Sect. 1]{qu78}. A collection $\mathcal{C}$ is contractible if $|\Delta(\Cc)|$ is contractible.

\begin{ntn}\label{fixpt}
Given a subgroup $P$ of $G$, let $\mathcal{C}^P = \lbrace Q \in \mathcal{C} \; | \; P \leq N_G(Q) \rbrace$ denote the subcollection of $\mathcal{C}$ fixed under the action of $P$. Next $\mathcal{C}_{>P} = \lbrace Q \in \mathcal{C} \; | \; P<Q \rbrace$. Similarly define $\mathcal{C}_{\geq P}$ and also $\mathcal{C}_{< P}$ and $\mathcal{C}_{\leq P}$. We will use the notation $\mathcal{C}_{>P}^{\leq H}$ for the set $\mathcal{C}_{>P}^{\leq H} = \lbrace Q \in \mathcal{C} \; | \;P<Q \leq H \rbrace$.
\end{ntn}

The following statement is a well known result, initially proved in a more general context, see for example \cite[8.2.4]{tdieck1} and \cite{jaseg}:

\begin{num}\label{equivfixed}
A poset map $f: \Cc \rightarrow \Cc'$ is a $G$-homotopy equivalence if and only if the induced map $f^H: \Cc^H \rightarrow \Cc'^H$ on $H$-fixed points is a homotopy equivalence for all subgroups $H$ of $G$.
\end{num}

\begin{thm}\label{homotopies}
Let $\Cc$ and $\Cc'$ be two collections of subgroups of $G$.
\begin{list}{\upshape\bfseries}
{\setlength{\leftmargin}{.8cm}
\setlength{\labelwidth}{1cm}
\setlength{\labelsep}{0.2cm}
\setlength{\parsep}{0.5ex plus 0.2ex minus 0.1ex}
\setlength{\itemsep}{.6ex plus 0.2ex minus 0ex}}
\item[${\rm(i).}$]\cite[Thm. 1]{tw91} Let $\Cc \subseteq \Cc'$. Assume either that $\Cc_{\geq P}$ is $N_G(P)$-contractible for all $P \in \Cc'$, or that $\Cc_{\leq P}$ is $N_G(P)$-contractible for all $P \in \Cc'$. Then the inclusion $\Cc \hookrightarrow \Cc'$ is a $G$-homotopy equivalence.
\item[${\rm(ii).}$]\cite[2.2(3)]{gs} Suppose that $F$ is a $G$-equivariant poset endomorphism of $\Cc$ satisfying either $F(P) \geq P$ or $F(P) \leq P$, for all $P \in \Cc$. Then, for any collection $\Cc'$ containing the image of $F$, the inclusions $F(\Cc) \subseteq \Cc' \subseteq \Cc$ are $G$-homotopy equivalences.
\item[${\rm(iii).}$]\cite[in proof Lem. 2.7(1)]{gs} Let $\Cc$ be a collection of $p$-subgroups that is closed under passage to $p$-overgroups. Let $Q$ be an arbitrary $p$-subgroup in $G$. Then the inclusion $\Cc_{\geq Q} \hookrightarrow \Cc^Q$ is a $N_G(Q)$-homotopy equivalence.
\end{list}
\end{thm}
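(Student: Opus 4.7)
The plan for part (iii) --- parts (i) and (ii) being attributed to the literature and therefore available --- is to manufacture an ascending, $N_G(Q)$-equivariant closure operator on $\Cc^Q$ whose image lies inside $\Cc_{\ge Q}$, and then to invoke part (ii) of the theorem with ambient group $N_G(Q)$.

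First I would define $F\colon \Cc^Q \to \Cc^Q$ by $F(P) = QP$. Three things need checking so that $F$ actually takes values in $\Cc^Q$. Because $Q$ normalizes $P$ (by membership in $\Cc^Q$), the product $QP$ is a genuine subgroup, and it is a $p$-subgroup since $P$ and $Q$ both are. The hypothesis that $\Cc$ is closed under passage to $p$-overgroups then forces $QP \in \Cc$, since $QP$ contains $P \in \Cc$. Finally $Q$ normalizes $QP$ trivially, as $Q \le QP$, so $QP \in \Cc^Q$.

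Next I would record the remaining formal properties. The map $F$ is order-preserving, it satisfies $F(P) \ge P$, and it is $N_G(Q)$-equivariant: for $g \in N_G(Q)$ and $P \in \Cc^Q$ one has $F(gPg^{-1}) = Q\cdot gPg^{-1} = g(g^{-1}Qg)Pg^{-1} = gQPg^{-1} = g\,F(P)\,g^{-1}$, using that $g^{-1}Qg = Q$. The image $F(\Cc^Q)$ manifestly consists of subgroups containing $Q$ and therefore lies in $\Cc_{\ge Q}$; conversely, any $P \in \Cc$ with $P \ge Q$ is automatically fixed by $Q$, so $\Cc_{\ge Q} \subseteq \Cc^Q$. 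Applying part (ii) to the chain of inclusions $F(\Cc^Q) \subseteq \Cc_{\ge Q} \subseteq \Cc^Q$, with ambient collection $\Cc^Q$ and $\Cc' = \Cc_{\ge Q}$, yields the asserted $N_G(Q)$-homotopy equivalence.

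The only delicate point, and the step I would verify carefully, is the closure check $QP \in \Cc$. This is precisely where the hypothesis that $\Cc$ is closed under passage to $p$-overgroups is used; without it the endomorphism $F$ would leave $\Cc$ and the ascending-closure argument, and hence the reduction to part (ii), would break down.
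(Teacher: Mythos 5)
The paper itself offers no proof of this theorem: all three parts are cited directly to the literature, with (iii) cited as appearing inside the proof of Lemma 2.7(1) of Grodal--Smith. Your derivation of (iii) from (ii) is correct and is exactly the standard argument one expects (and the one implicit in the cited source): the ascending $N_G(Q)$-equivariant endomorphism $P \mapsto QP$ on $\Cc^Q$, whose image lands in $\Cc_{\geq Q}$, combined with the observation $\Cc_{\geq Q} \subseteq \Cc^Q$, immediately yields the equivalence via (ii) applied over the group $N_G(Q)$. All the verification steps you flag --- that $QP$ is a $p$-subgroup because $Q$ normalizes $P$, that $QP \in \Cc$ requires closure under $p$-overgroups, and that $N_G(Q)$-equivariance comes from $gQg^{-1}=Q$ --- are exactly the points that need checking and are checked correctly. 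There is nothing to compare against in the paper since the paper records this result as background, but your argument is sound and complete.
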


\begin{thm}\cite[Thm. 1]{tw91}\label{hmap}
Let $G$ be a group and let $\varphi: \Cc \rightarrow \Cc'$ be a poset map between two collections of subgroups of $G$. Suppose that for all $P \in \Cc'$, the subcollection $\varphi^{-1}(\Cc'_{\ge P})$ is $N_G(P)$-contractible. Then $\varphi$ is a $G$-homotopy equivalence.
\end{thm}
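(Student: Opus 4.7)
The plan is to reduce the equivariant statement to the classical (non-equivariant) Quillen Theorem A via the fixed-point criterion \ref{equivfixed}. By that criterion, to show that $\varphi$ is a $G$-homotopy equivalence it suffices to show that, for every subgroup $H \le G$, the induced map on $H$-fixed points
\[
\varphi^H : \Cc^H \longrightarrow \Cc'^H
\]
is an (ordinary) homotopy equivalence of posets. Fix such an $H$; the remainder of the argument then takes place inside a single subgroup.

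To the map $\varphi^H$ I would apply the non-equivariant Quillen Theorem A for poset maps: if, for every $P$ in the target poset, the preimage of the upper set at $P$ is contractible, then the map is a homotopy equivalence. So pick $P \in \Cc'^H$; since $H$ fixes $P$ we have $H \le N_G(P)$. The key identification is
\[
(\varphi^H)^{-1}\!\bigl(\Cc'^H_{\ge P}\bigr) \;=\; \bigl(\varphi^{-1}(\Cc'_{\ge P})\bigr)^H.
\]
Indeed, the left-hand side consists of those $Q \in \Cc$ fixed by $H$ with $\varphi(Q) \ge P$, and since $\varphi$ is $G$-equivariant, $H$-fixedness of $Q$ automatically implies $H$-fixedness of $\varphi(Q)$; this is exactly the right-hand side.

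Now invoke the hypothesis: $\varphi^{-1}(\Cc'_{\ge P})$ is $N_G(P)$-contractible, meaning it is $N_G(P)$-homotopy equivalent to a point. Applying \ref{equivfixed} to the constant map $\varphi^{-1}(\Cc'_{\ge P}) \to \{\ast\}$ and to the subgroup $H \le N_G(P)$ shows that its $H$-fixed subposet is (ordinarily) contractible. Combined with the identification above, this verifies the hypothesis of Quillen's Theorem A for $\varphi^H$, so $\varphi^H$ is a homotopy equivalence. Since $H$ was arbitrary, $\varphi$ is a $G$-homotopy equivalence by \ref{equivfixed}.

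The only subtle point is the set-theoretic identification of $(\varphi^H)^{-1}(\Cc'^H_{\ge P})$ with $(\varphi^{-1}(\Cc'_{\ge P}))^H$; everything else is a formal combination of \ref{equivfixed} with the classical Quillen Theorem A. The main obstacle, really, is recording the fact that $H$-contractibility of an $N_G(P)$-contractible complex for any subgroup $H \le N_G(P)$ is itself an instance of \ref{equivfixed}, so that the global equivariant input feeds cleanly into each fixed-point computation.
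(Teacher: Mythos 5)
Your proof is correct, and since the paper merely cites this as Th\'evenaz--Webb \cite[Thm. 1]{tw91} without reproducing an argument, your derivation—reducing via the fixed-point criterion \ref{equivfixed} to non-equivariant Quillen Theorem A on each $\Cc^H \to \Cc'^H$, using the identification $(\varphi^H)^{-1}(\Cc'^H_{\ge P}) = (\varphi^{-1}(\Cc'_{\ge P}))^H$ and the $H$-contractibility supplied by the $N_G(P)$-contractibility hypothesis—is exactly the standard route through that result. No gaps to report.
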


We end this succinct review with a result due to Grodal \cite[pp. 420-421]{gr02}, see also \cite[Thm. 1]{sa06} or for an easy proof \cite[Prop. 4.3]{mgo4}.

\begin{prop}\label{grosubcol}
Let $\Cc$ be a collection of nontrivial $p$-subgroups of $G$, which is closed under passage to $p$-overgroups. Let $Q$ be a $p$-subgroup of $G$. If $\Cc '  \subseteq \Cc$ is a subcollection
which contains every $p$-radical subgroup in $\Cc$  then $\Cc_{> Q}$ is $N_G(Q)$-homotopy equivalent to $\Cc'_{>Q}$.
\end{prop}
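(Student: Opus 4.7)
The plan is to produce an $N_G(Q)$-equivariant self-map $F$ of $\Cc_{>Q}$ whose image lies in $\Cc'_{>Q}$ and which satisfies $F(P) \geq P$ for every $P$. Theorem \ref{homotopies}(ii), applied with ambient group $N_G(Q)$ and intermediate collection $\Cc'' = \Cc'_{>Q}$, then yields the claimed $N_G(Q)$-homotopy equivalence $\Cc_{>Q} \simeq \Cc'_{>Q}$.

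To build $F$, I will iterate the operation $P \mapsto O_p(N_G(P))$. Starting from $P_0 = P$, set $P_{k+1} := O_p(N_G(P_k))$. Each $P_k$ is a normal $p$-subgroup of $N_G(P_k)$, so $P_k \leq P_{k+1}$, and by finiteness of $G$ the ascending chain stabilizes at some $P_\infty$ with $O_p(N_G(P_\infty)) = P_\infty$; that is, $P_\infty$ is $p$-radical in $G$. Define $F(P) := P_\infty$. Since $\Cc$ is closed under passage to $p$-overgroups and $P_0 = P \in \Cc$, every $P_k$ lies in $\Cc$, so $F(P) \in \Cc$ is a $p$-radical member of $\Cc$ and hence, by hypothesis on $\Cc'$, belongs to $\Cc'$. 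The inequality $F(P) \geq P > Q$ then places $F(P)$ in $\Cc'_{>Q}$. Finally, $F$ is $G$-equivariant because conjugation commutes with both $N_G(\cdot)$ and $O_p(\cdot)$, so on the restricted domain $\Cc_{>Q}$ the map $F$ is $N_G(Q)$-equivariant.

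The main obstacle I expect to encounter is confirming that $F$ qualifies as a poset endomorphism of $\Cc_{>Q}$ in the sense required by Theorem \ref{homotopies}(ii): while each individual chain $P_0 \leq P_1 \leq \cdots$ is increasing, the one-step operator $P \mapsto O_p(N_G(P))$ is not monotone in $P$, so order preservation for the iterated $F$ is not automatic. To settle this I would either verify the required inequality $F(P) \leq F(P')$ for $P \leq P'$ by a careful comparison of the two stabilizing chains, or invoke the strengthened form of Theorem \ref{homotopies}(ii) in which only the pointwise condition $F(P) \geq P$ together with $G$-equivariance is needed to deform the inclusion $F(\Cc_{>Q}) \hookrightarrow \Cc_{>Q}$ to the identity; this is the perspective used in the cited references \cite{sa06} and \cite{mgo4}. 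Once this technical step is in place, Theorem \ref{homotopies}(ii) immediately produces the chain of $N_G(Q)$-homotopy equivalences $F(\Cc_{>Q}) \subseteq \Cc'_{>Q} \subseteq \Cc_{>Q}$, completing the proof.
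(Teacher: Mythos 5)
You correctly flag the crux: the radical closure $F(P)=R_P$ is not order-preserving, so $F$ is not a poset endomorphism, and Theorem~\ref{homotopies}(ii) does not apply. Unfortunately, neither of your two proposed workarounds closes this gap, so the proof as written does not go through.

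Monotonicity of $F$ genuinely fails, and this is not a technicality to be carefully argued around. Take $G=S_4$, $p=2$, $Q=1$, $\Cc=\Ss_2(G)$. Let $P=\langle (12)(34)\rangle$ and $V$ the normal Klein four-group, so $P<V$. Then $N_G(P)=C_G(P)$ is a dihedral group of order $8$, which is a Sylow $2$-subgroup, hence self-normalizing and $2$-radical; so $R_P=D_8$. On the other hand $N_G(V)=S_4$ and $O_2(S_4)=V$, so $R_V=V$. Thus $P\le V$ but $R_P\not\le R_V$. Your first workaround, verifying $F(P)\le F(P')$, is therefore impossible. Your second workaround invokes a ``strengthened'' form of Theorem~\ref{homotopies}(ii) in which the poset-map hypothesis is dropped, but no such statement exists: the homotopy between $F$ and the identity constructed from $P\le F(P)$ requires $F$ to be a simplicial (order-preserving) map, and that is precisely what fails. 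The references you cite do not take that route either.

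The paper's own proof (via Grodal, Sawabe, and Prop.~4.3 of \cite{mgo4}) uses Theorem~\ref{homotopies}(i) rather than (ii). One shows, for each $P\in\Cc_{>Q}$, that the subposet $(\Cc'_{>Q})_{\ge P}$ is $(N_G(P)\cap N_G(Q))$-contractible by exhibiting an equivariant zigzag of poset maps passing through the radical closure $R_P$ (of the same flavour as the contraction in the proof of Lemma~\ref{lem>M}), using that $R_P\in\Cc$ by closure under $p$-overgroups, hence $R_P\in\Cc'$, and that $N_G(P)\le N_G(R_P)$. The radical closure still plays the central role, but as the apex of a local contraction of each cone rather than as a globally defined retraction of $\Cc_{>Q}$.
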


\subsection*{Standard collections of p-subgroups}

A $p$-subgroup $R$ of $G$ is called $p$-{\it radical} if $R = O_p(N_G(R))$. Every $p$-subgroup $Q$ of $G$ is contained in a $p$-radical subgroup of $G$ uniquely determined by $Q$ and $G$. This is called the {\it radical closure} of $Q$ in $G$ and is the last term $R_Q$ of the chain $P_{i+1} = O_p(N_G(P_i))$ starting with $Q = P_0$. It is easy to see that $N_G(Q) \leq N_G(R_Q)$. A $p$-subgroup $R$ is called $p$-{\it centric} if $Z(R)$ is a Sylow $p$-subgroup of $C_G(R)$, in which case $C_G(R) = Z(R) \times H$, with $H$ a subgroup of order relatively prime to $p$. If $R$ is $p$-centric and $Q$ is a $p$-subgroup of $G$ which contains $R$ then $Q$ is also $p$-centric and $Z(Q) \le Z(R)$.

In what follows $\Ss (G)$ will denote the Brown collection of nontrivial $p$-subgroups and $\Bb(G)$ will denote the Bouc collection of nontrivial $p$-radical subgroups\footnote{It is customary to denote these collections by $\Ss_p(G)$ and $\Bb_p(G)$, but since the prime $p$ is fixed throughout the first sections of the paper, in order to simplify the notation we will drop the subscript $p$.}. The inclusion $\Bb(G) \subseteq \Ss(G)$ is a $G$-homotopy equivalence \cite[Thm. 2]{tw91}. Let $\Dd(G)$ denote the subcollection of $\Ss(G)$ consisting of the nontrivial $p$-centric and $p$-radical subgroups of $G$. This collection is not always homotopy equivalent with $\Ss(G)$.

\subsection*{Distinguished collections of p-subgroups}

We recollect in this section the definitions and some facts on certain subfamilies of $\Ss(G)$, called distinguished collections of $p$-subgroups. These collections were introduced in \cite{ mgo1} and further analyzed in \cite{mgo3} and in \cite{mgo4}.

\begin{num}\label{pcen}
An element $x$ of order $p$ in $G$ is {\it $p$-central} if $x$ is in the center of a Sylow $p$-subgroup of $G$. Let $\widehat{\Gamma}_p(G)$ denote the family of $p$-central elements of $G$.
\end{num}

\begin{dft}\label{hatdef}
For $\Cc(G)$ a collection of $p$-subgroups of $G$ denote:
$$\wCc(G) = \lbrace P \in \Cc(G) \; | \; Z(P) \cap \widehat{\Gamma}_p(G) \not= \emptyset \; \rbrace$$
the collection of subgroups in $\Cc(G)$ which contain $p$-central elements in their centers. We call $\wCc(G)$ the {\it distinguished} $\Cc(G)$ {\it collection}. We shall refer to the subgroups in $\wCc(G)$ as {\it distinguished subgroups}. Also, denote
$$\tCc(G) = \lbrace P \in \Cc(G) \; | \; P \cap \widehat{\Gamma}_p(G) \not= \emptyset \; \rbrace$$
the collection of subgroups in $\Cc(G)$ which contain $p$-central elements. Obviously, the following inclusions hold $\wCc(G) \subseteq \tCc(G) \subseteq \Cc(G)$.
\end{dft}

\begin{num}\label{tildeclosed}
For later reference, we collect a few useful facts. The proofs are short and elementary, details can be found in \cite[Lem. 3.3, Lem. 3.5 and Prop. 3.7]{mgo4}.
\begin{list}{\upshape\bfseries}
{\setlength{\leftmargin}{1.4cm}
\setlength{\labelwidth}{1cm}
\setlength{\labelsep}{0.2cm}
\setlength{\parsep}{0cm}
\setlength{\itemsep}{0cm}}
\item[${\rm(i).}$] The collection $\tSs(G)$ is closed under passage to $p$-overgroups.
\item[${\rm(ii).}$] Let $P \in \Ss(G)$. If $Q \in \wSs(G)_{>P}$ then $N_Q(P) \in \wSs(G)$.
\item[${\rm(iii).}$] If $N_G(P)$ has characteristic $p$, then $O_p(N_G(P))$ is $p$-centric and distinguished.
\item[${\rm(iv).}$] If $G$ has parabolic characteristic $p$ and if $P \in \tSs(G)$ then $N_G(P)$ has characteristic $p$.
\end{list}
\end{num}

The distinguished collections of $p$-subgroups exhibit similar behavior as their standard counterparts. Some of these properties are collected in the following:

\begin{thm}\label{propdist}
Let $G$ be a finite group.
\begin{list}{\upshape\bfseries}
{\setlength{\leftmargin}{.8cm}
\setlength{\labelwidth}{1cm}
\setlength{\labelsep}{0.2cm}
\setlength{\parsep}{0.1cm}
\setlength{\itemsep}{0cm}}
\item[${\rm(i).}$]\cite[Prop. 3.4]{mgo4} All $p$-centric subgroups of $G$ are distinguished, hence $\Dd(G) \subseteq \wBb(G)$.
\item[${\rm(ii).}$]\cite[Prop. 3.6]{mgo4} If $G$ has local characteristic $p$, then $\Dd(G)=\wBb(G)=\Bb(G)$.
\item[${\rm(iii).}$]\cite[Prop. 3.7]{mgo4} If $G$ has parabolic characteristic $p$, then $\Dd(G) = \wBb(G) = \widetilde{\Bb}(G)$.
\item[${\rm(iv).}$]\cite[Prop. 3.9, 3.10]{mgo4} If $G$ has parabolic characteristic $p$, then $\wBb(G)$, $\wSs(G)$ and $\tSs(G)$ are $G$-homotopy equivalent.
\end{list}
\end{thm}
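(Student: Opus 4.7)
The plan is to establish (i)--(iii) by direct characteristic $p$ bookkeeping, and then to deduce (iv) from Grodal's Proposition \ref{grosubcol} applied to the collection $\tSs(G)$, which by \ref{tildeclosed}(i) is closed under passage to $p$-overgroups.

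For (i), take a $p$-centric subgroup $R$ and any Sylow $p$-subgroup $S$ containing it. The group $Z(S)$ is a nontrivial $p$-subgroup of $C_G(R)$, and since $R$ is $p$-centric the unique Sylow $p$-subgroup of $C_G(R)$ is $Z(R)$; hence $Z(S) \leq Z(R)$, and any nontrivial element of $Z(S)$ is $p$-central, so $R \in \wBb(G)$. For (ii) the only nontrivial point is $\Bb \subseteq \Dd$: if $R \in \Bb$, local characteristic $p$ makes $N_G(R)$ characteristic $p$, so $C_G(R) = C_{N_G(R)}(O_p(N_G(R))) \leq R$, forcing $C_G(R) = Z(R)$ and hence $R \in \Dd$. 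For (iii) the same computation is repeated with $R \in \widetilde{\Bb}$; the characteristic $p$ of $N_G(R)$ is now supplied by \ref{tildeclosed}(iv), which is available because $\widetilde{\Bb} \subseteq \tSs$ and $G$ has parabolic characteristic $p$.

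For (iv) the engine is the identification $\widetilde{\Bb} = \wBb$ just established in (iii). By definition, the $p$-radical subgroups of $G$ contained in $\tSs$ are exactly $\widetilde{\Bb}$, and this coincides with $\wBb$. Thus both $\wBb$ and $\wSs$ are subcollections of $\tSs$ that contain every $p$-radical subgroup of $\tSs$. Proposition \ref{grosubcol}, applied with $Q = 1$, then gives the two $G$-homotopy equivalences $\tSs \simeq_G \wBb$ and $\tSs \simeq_G \wSs$, yielding the required chain $\wBb \simeq_G \wSs \simeq_G \tSs$.

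The one nontrivial input is \ref{tildeclosed}(iv) in the proof of (iii): it is precisely what promotes ``$R$ contains a $p$-central element'' into ``$N_G(R)$ has characteristic $p$,'' and hence bridges $\widetilde{\Bb}$ with $\wBb$. Once (iii) is in place, part (iv) is a purely formal double application of Grodal's proposition.
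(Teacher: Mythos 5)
Your proof is correct and follows the natural line of reasoning that the cited source \cite{mgo4} also takes: (i) from $Z(S)\le Z(R)$ for a Sylow $p$-subgroup $S$ containing the $p$-centric group $R$, (ii) and (iii) from the characteristic-$p$ property of $N_G(R)$ forcing $C_G(R)=Z(R)$, and (iv) as a twofold application of Grodal's Proposition~\ref{grosubcol} with $Q=1$ to the overgroup-closed collection $\tSs(G)$, using the identification $\widetilde{\Bb}=\wBb$ from (iii) to check the hypothesis. One small wording point: in (i) not every nontrivial element of $Z(S)$ is $p$-central in the sense of Definition~\ref{pcen} (that notion requires order exactly $p$), but $Z(S)$ certainly contains elements of order $p$, which suffices to put a $p$-central element inside $Z(R)$.
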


\begin{num}
In the case that $G$ has parabolic characteristic $p$, the better known collection of $p$-centric $p$-radical subgroups is equal to the collection of distinguished $p$-radical subgroups. The reason for using the ``distinguished" point of view for this collection resides in the fact that our approach is far more useful and easier to work with in proofs.
\end{num}

%%%%%%%%%%%%%%%%%%%%%%%%%%%%%%%%%%%%%%%%%%%%%%%%%%%%%%%%%
\section{The structure of the fixed point sets}%%%%%%%%%%
%%%%%%%%%%%%%%%%%%%%%%%%%%%%%%%%%%%%%%%%%%%%%%%%%%%%%%%%%

We investigate the fixed point sets of $p$-elements of $G$ acting on the order complex of the collection $\Dd(G)$ of $p$-centric and $p$-radical subgroups. We are interested in groups of parabolic characteristic $p$, in which case, as seen in Theorem \ref{propdist}, the $p$-centric and $p$-radical subgroups are the distinguished $p$-radical subgroups of $G$ and the collections $\wBb(G)$, $\wSs(G)$ and $\tSs(G)$ are $G$-homotopy equivalent.

\begin{ntn}\label{cnotat}
Throughout this section, $G$ is a finite group of parabolic characteristic $p$, where $p$ is a prime dividing its order. Let $T$ be a $p$-subgroup of $G$; we use the shorthand notation $C = T \cdot C_G(T)$ and $O_C = O_p(T \cdot C_G(T))$.
\end{ntn}

\begin{lem}\label{lem>M}
Let $G$ be a finite group of parabolic characteristic $p$. The inclusion $\mathcal{X} := \wSs (G)^{\leq C}_{>M} \hookrightarrow \mathcal{Y} := \tSs(G)^{\leq C}_{>M}$ is a $N_G(T)$-homotopy equivalence, for every $p$-subgroup $M$ that satisfies $T \leq M \leq C$ and $N_G(T) \leq N_G(M)$.
\end{lem}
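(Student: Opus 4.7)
The containment $\mathcal X\subseteq \mathcal Y$ is immediate from $Z(Q)\cap\widehat{\Gamma}_p(G)\subseteq Q\cap\widehat{\Gamma}_p(G)$, and both posets carry the conjugation action of $N_G(T)$. To promote this inclusion to an $N_G(T)$-homotopy equivalence, my plan is to apply Theorem~\ref{homotopies}(ii) by exhibiting an $N_G(T)$-equivariant poset endomorphism $F\colon \mathcal Y\to\mathcal Y$ with $F(P)\ge P$ for every $P\in \mathcal Y$ and $F(\mathcal Y)\subseteq \mathcal X$; then the successive inclusions $F(\mathcal Y)\subseteq \mathcal X\subseteq \mathcal Y$ will both be $N_G(T)$-homotopy equivalences.

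The construction of $F$ will rest on two observations. First, because $T\le M<P$ we have $T\le P$, and therefore $C_G(P)\le C_G(T)\le C$; in particular any $p$-subgroup of $C_G(P)$ automatically lies in $C$. Second, since $P\in \tSs(G)$ and $G$ has parabolic characteristic $p$, item (iv) of \ref{tildeclosed} gives that $N_G(P)$ has characteristic $p$, and then item (iii) of \ref{tildeclosed} gives that $R(P):=O_p(N_G(P))$ is $p$-centric and distinguished. Consequently $V(P):=\Omega_1(Z(R(P)))$ is an elementary abelian $p$-subgroup that meets $\widehat{\Gamma}_p(G)$ and centralizes $R(P)\ge P$. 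Setting $F(P):=P\cdot V(P)$ produces a $p$-subgroup of $C$ containing $P$, with $V(P)\le Z(F(P))$, so $F(P)\in\wSs(G)$ and therefore $F(P)\in\mathcal X$.

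The main obstacle is verifying that $F$ is in fact a poset endomorphism, since the assignment $P\mapsto R(P)$ is not obviously monotone in $P$. I would address this either by replacing the pointwise $V(P)$ with a uniform choice such as $V:=\Omega_1(Z(O_p(N_G(M))))$---which is $N_G(T)$-invariant (because $N_G(T)\le N_G(M)$), sits inside $C_G(M)\le C_G(T)\le C$, and meets $\widehat{\Gamma}_p(G)$ whenever $M\in\tSs(G)$---or by switching to Theorem~\ref{homotopies}(i) and verifying directly that $\mathcal X_{\ge P}$ is $N_{N_G(T)}(P)$-contractible through the zig-zag $Q\ge N_Q(P)\le N_Q(P)\cdot V(P)\ge P\cdot V(P)$. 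In this latter argument the key inputs are item (ii) of \ref{tildeclosed}, which guarantees $N_Q(P)\in\wSs(G)$ for $Q\in\mathcal X_{>P}$, together with the $N_G(P)$-normality of $V(P)$, which ensures that $N_Q(P)\cdot V(P)$ is genuinely a subgroup of $C$.
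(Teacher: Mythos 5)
Your first workaround (the uniform $V := \Omega_1(Z(O_p(N_G(M))))$) cannot serve the Lemma's purpose. In both places where the Lemma is invoked --- Step 6 of Theorem \ref{ocintilde} with $M=T$, and Step 1 of the proof of Theorem \ref{noncenthm} with $M=O_C$ --- it is explicitly arranged that $M \notin \tSs(G)$, so there is no reason for $V$ to meet $\widehat{\Gamma}_p(G)$ and the construction produces nothing in $\wSs(G)$. Even granting $M\in\tSs(G)$, you would also need $V$ to centralize $P$, and knowing $P\le C\le N_G(M)$ does not give $P\le O_p(N_G(M))$, so $V\le Z(PV)$ is not guaranteed. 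You are therefore forced onto your second route, the zig-zag contraction of $\mathcal X_{\ge P}$ via Theorem \ref{homotopies}(i), which is indeed the paper's argument (the paper takes $Z(R_P)$ with $R_P$ the radical closure of $P$; your $O_p(N_G(P))$ works just as well, since by \ref{tildeclosed}(iii)--(iv) it is also $p$-centric, distinguished, and normal in $N_G(P)$).

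As written, though, the zig-zag route has a gap at the middle term. The ingredients you cite --- $N_Q(P)\in\wSs(G)$ from \ref{tildeclosed}(ii), and $N_G(P)$-normality of $V(P)$ making $N_Q(P)\cdot V(P)$ a subgroup of $C$ --- do not establish that $N_Q(P)\cdot V(P)$ is distinguished, and that is the crux of the proof. A $p$-central element of $Z(N_Q(P))$ need not centralize $V(P)$, and a $p$-central element of $V(P)$ need not centralize $N_Q(P)$, so neither of the two obvious sources produces an element of $Z(N_Q(P)V(P))\cap\widehat{\Gamma}_p(G)$. The paper closes this with a Sylow argument: choose $S_R\in\syl_p(N_G(R_P))$ with $N_Q(P)\le S_R$ and extend to $S\in\syl_p(G)$; then $R_P\le S_R\le S$ gives $Z(S)\le C_G(R_P)$, and $p$-centricity of $R_P$ forces $Z(S)\le Z(R_P)$, which together with $Z(S)\le C_G(N_Q(P))$ (from $N_Q(P)\le S$) yields $Z(S)\le Z\bigl(N_Q(P)Z(R_P)\bigr)$. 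You need this step, adapted to your $V(P)=\Omega_1(Z(O_p(N_G(P))))$ (e.g.\ by passing to $\Omega_1(Z(S))$), to complete the proof.
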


\begin{proof}
Let $P \in \mathcal{Y}$; we will show that $\mathcal{X}_{\geq P}$ is equivariantly contractible and apply Theorem \ref{homotopies}(i). Let $R_P$ be the radical closure of $P$ and let $Q \in \mathcal{X}_{\geq P}$. Consider the string of poset maps given by:
$$Q \geq N_Q(P) \leq N_Q(P) Z(R_P) \geq P Z(R_P)$$
Note that $T \leq M < P \leq R_P$, so that $Z(R_P) \leq C$. By \ref{tildeclosed}(iii) and (iv), $R_P$ is $p$-centric and distinguished. This implies that $Z(R_P)$ is distinguished, and also $P Z(R_P)$ is distinguished, since $Z(R_P) \leq Z(P Z(R_P))$. Next \ref{tildeclosed}(ii) implies that $N_Q(P)$ is distinguished. Observe that $N_Q(P) \leq N_G(P) \leq N_G(R_P)$, implying $N_Q(P) Z(R_P)$ is a group. Choose $S_R \in \syl_p(N_G(R_P))$ satisfying $N_Q(P) \leq S_R$, and extend it to $S \in \syl_p(G)$. Note $R_P \leq S_R \leq S$. Then $Z(S) \leq C_G(N_Q(P))$ and $Z(S) \leq C_G(R_P)=Z(R_P)$, since $R_P$ is $p$-centric and $p$-radical. This implies that $Z(S) \leq Z(N_Q(P)Z(R_P))$, so that $N_Q(P)Z(R_P)$ is distinguished. Thus the above is a string of equivariant poset maps $\mathcal{X}_{\geq P} \rightarrow \mathcal{X}_{\geq P}$, which proves the equivariant contractibility of the subcollection $\mathcal{X}_{\geq P}$. Note the second condition on $M$ is needed so that $N_G(T)$ acts on $\mathcal{X}$ and $\mathcal{Y}$.
\end{proof}

The next result was first obtained for $T$ a cyclic group of order $p$ in \cite{mgo4}, and the arguments from Propositions $4.4$, $4.5$ and $4.7$ of that paper also apply to any $p$-group $T$. For completeness we include these proofs below. The notations from \ref{cnotat} are maintained.

\begin{thm}\label{ocintilde}
Let $G$ be a group of parabolic characteristic $p$ and let $T$ be a $p$-subgroup of $G$. If $O_C \in \tSs(G)$ then the fixed point set $\Dd(G)^T$ is $N_G(T)$-contractible.
\end{thm}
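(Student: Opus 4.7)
The plan is to reduce the problem to showing that $\tSs(G)^T$ is $N_G(T)$-contractible and then to contract onto $O_C$ via a zig-zag of poset maps. By Theorem~\ref{propdist}(iv), the collections $\Dd(G)=\wBb(G)$ and $\tSs(G)$ are $G$-homotopy equivalent, and by~\ref{equivfixed} the equivalence restricts to an $N_G(T)$-homotopy equivalence $\Dd(G)^T\simeq\tSs(G)^T$; so it suffices to prove $\tSs(G)^T$ is $N_G(T)$-contractible.

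Next I would identify $O_C$ as a canonical $N_G(T)$-fixed element of $\tSs(G)^T$. Since $C_G(T)$ centralizes $T$ pointwise, $T$ is normal in $C=T\cdot C_G(T)$, and hence $T\le O_p(C)=O_C$. Together with the hypothesis $O_C\in\tSs(G)$ and with $O_C\trianglelefteq C\ni T$, this gives $O_C\in\tSs(G)^T$. Moreover $N_G(T)$ normalizes both $T$ and $C_G(T)$, hence $C$ and its characteristic subgroup $O_p(C)=O_C$; so $O_C$ is fixed by the $N_G(T)$-action on $\tSs(G)^T$.

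To contract equivariantly onto $O_C$, for each $Q\in\tSs(G)^T$ I would use the zig-zag of monotone $N_G(T)$-equivariant poset maps
$$Q\;\ge\;Q\cap N_G(O_C)\;\le\;(Q\cap N_G(O_C))\cdot O_C\;\ge\;O_C,$$
which is well-defined because $O_C\trianglelefteq N_G(O_C)$. Each map is monotone in $Q$, and $N_G(T)\le N_G(O_C)$ supplies $N_G(T)$-equivariance; the endpoints and the third term clearly lie in $\tSs(G)^T$. Applying Theorem~\ref{homotopies}(ii) to each comparable pair, together with the standard zig-zag homotopy criterion, would then yield the desired $N_G(T)$-equivariant contraction.

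The main obstacle is that the second term $Q\cap N_G(O_C)$, while automatically $T$-invariant and containing $T$, need not contain a $p$-central element of $G$ and hence need not lie in $\tSs(G)$. I plan to handle this by augmenting with the center of the radical closure $R_{O_C}=O_p(N_G(O_C))$: by~\ref{tildeclosed}(iv) and the parabolic characteristic $p$ hypothesis, $N_G(O_C)$ has characteristic $p$, whereupon~\ref{tildeclosed}(iii) shows $R_{O_C}$ is $p$-centric and distinguished; so $Z(R_{O_C})\cap\widehat{\Gamma}_p(G)\ne\emptyset$ and $Z(R_{O_C})\trianglelefteq N_G(O_C)$. Replacing $Q\cap N_G(O_C)$ by $(Q\cap N_G(O_C))\cdot Z(R_{O_C})$ throughout the zig-zag preserves monotonicity, $T$-invariance, and $N_G(T)$-equivariance, and places every term in $\tSs(G)^T$.
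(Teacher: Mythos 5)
Your overall strategy — reduce to $\tSs(G)^T$ via Theorem~\ref{propdist}(iv), then contract equivariantly onto the canonical fixed point $O_C$ by a zig-zag of poset endomorphisms — is exactly the right shape for this kind of result, and your identification of $O_C$ as $N_G(T)$-fixed and $T$-stable is correct. The trouble lies in the middle of the zig-zag, and your proposed fix does not actually resolve it.

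You correctly flag that $Q \cap N_G(O_C)$ need not lie in $\tSs(G)$, since a subgroup of a distinguished group need not contain any $p$-central element. But the repair you suggest, replacing $Q \cap N_G(O_C)$ by $(Q \cap N_G(O_C))\cdot Z(R_{O_C})$ throughout, destroys the first inequality: the step $Q \geq (Q \cap N_G(O_C)) \cdot Z(R_{O_C})$ requires $Z(R_{O_C}) \leq Q$, and there is no reason this holds for an arbitrary $Q \in \tSs(G)^T$. (Indeed $Q$ need not be contained in, nor even intersect nontrivially, $N_G(O_C)$.) You cannot augment the downward target with a fixed external group without breaking the zig-zag direction; and if instead you insert $(Q\cap N_G(O_C))\cdot Z(R_{O_C})$ as an extra step \emph{after} $Q\cap N_G(O_C)$, you are back to needing $Q\cap N_G(O_C)\in\tSs(G)^T$, which is precisely the unsolved problem. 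This is a genuine gap, not a cosmetic one.

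The paper avoids this obstacle by a longer chain of preliminary reductions before attempting the contraction. After the step you do perform, it passes from fixed points $\tSs(G)^T$ to overgroups $\tSs(G)_{\geq T}$ (using Theorem~\ref{homotopies}(iii) and \ref{tildeclosed}(i)), then to $\wSs(G)_{>T}$, and then — and this is the key step you skip — restricts to $\wSs(G)^{\leq C}_{>T}$ via the equivariant poset map $F(P)=P\cap C$. This restriction is legitimate because, once $T<P$ and $P\in\wSs(G)$, the distinguished center $Z(P)$ lies in $C_G(T)\leq C$, so $P\cap C$ stays distinguished. Only after everything lives inside $C$ does the paper contract onto $O_C$, via $F(P)=P\cdot O_C$ on $\tSs(G)^{\leq C}_{>T}$, where $O_C\trianglelefteq C$ makes the product a $p$-group and $\tSs(G)^{\leq C}_{\geq O_C}$ is a cone. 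In other words, the "one-shot" zig-zag you attempt must be preceded by the reduction to subgroups of $C$; without it, the intermediate terms of the contraction cannot be kept inside the collection. If you want to rescue your approach, first prove $\tSs(G)^T \simeq \wSs(G)^{\leq C}_{>T}$ (or $\tSs(G)^{\leq C}_{>T}$) $N_G(T)$-equivariantly, and then your final contraction to $O_C$ does go through.
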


\begin{proof}
The result is obtained via a string of homotopy equivalences.

{\it Step 1}: $\Dd(G)^T = \wBb(G)^T \hookrightarrow \wSs(G)^T \hookrightarrow \tSs(G)^T$ are $N_G(T)$-homotopy equivalences, since $\Dd(G) = \wBb(G) \hookrightarrow \wSs(G) \hookrightarrow \tSs(G)$ are $G$-homotopy equivalences by Theorem \ref{propdist}(iii) and (iv).

{\it Step 2}: $\tSs(G)_{\geq T} \hookrightarrow \tSs(G)^T$ is an $N_G(T)$-homotopy equivalence by Theorem \ref{homotopies}(iii) and \ref{tildeclosed}(i).

{\it Step 3}: If $T \in \tSs(G)$ then $\tSs(G)_{\geq T}$ is contractible, a cone on $T$, and we are done. Thus we assume $T \not\in \tSs(G)$, so that $\tSs(G)_{\geq T} = \tSs(G)_{>T}$.

{\it Step 4}: $\wSs(G)_{>T} \hookrightarrow \tSs(G)_{>T}$ is an $N_G(T)$-homotopy equivalence by Proposition \ref{grosubcol}. We are also using \ref{tildeclosed}(i) and Theorem \ref{propdist}(iii). The latter implies $\tSs(G) \cap \Bb(G) \subseteq \wSs(G)$.

{\it Step 5}: $\wSs(G)^{\leq C}_{>T} \hookrightarrow \wSs(G)_{>T}$ is an $N_G(T)$-homotopy equivalence by Theorem \ref{homotopies}(ii), where the poset map used is $F(P) = P \cap C$, for $P \in \wSs(G)_{>T}$. Since $T < P$, we have $Z(P) \leq C_G(T) \leq C$, so that $Z(P) \leq P \cap C$.

{\it Step 6}: The inclusion $\wSs(G)^{\leq C}_{>T} \hookrightarrow \tSs(G)^{\leq C}_{>T}$ is an $N_G(T)$-homotopy equivalence by Lemma \ref{lem>M}, applied with $M = T$.

{\it Step 7}: $\tSs(G)^{\leq C}_{\ge O_C} \hookrightarrow \tSs(G)^{\leq C}_{>T}$ is an $N_G(T)$-homotopy equivalence by Theorem \ref{homotopies}(ii), with poset map $F(P)=P \cdot O_C$ for $P \in \tSs(G)^{\le C}_{>T}$. Since we have assumed that $O_C \in \tSs(G)$, we have $\tSs(G)^{\le C}_{\ge O_C}$ is contractible, a cone on $O_C$. Thus we have shown that $\Dd^T(G)$ is $N_G(T)$-contractible.
\end{proof}

We arrive at the main result of this section. It asserts, that under certain assumptions on the shape of the centralizer of a purely noncentral $p$-subgroup $T$ of $G$, the structure of the fixed point set $\Dd^T(G)$ is determined by the collection of $p$-centric and $p$-radical subgroups in a subquotient (or section) of this centralizer. Our Theorem \ref{noncenthm} generalizes the result of \cite[Thm. 4.14]{mgo4}. Before stating and proving the theorem, we need to introduce some more notation.

\begin{ntn}\label{triplentn}
Let $T$ be any $p$-subgroup of $G$, and denote $C = T \cdot C_G(T)$ and $O_C = O_p(T \cdot C_G(T))$, maintaining the notation introduced in \ref{cnotat}. Let $L \trianglelefteq C$ be a normal subgroup of $C$ with $O_C \leq L$. Denote the quotient groups by $K = C/L$ and $H = L/O_C$, so we can write $C=O_C.H.K = L.K$.\\
Let $S_C \in \syl_p(C)$ and extend it to $S \in \syl_p(G)$. Since $T \le S_C \le S$ it follows that $Z(S) \le Z(S_C)$. Next, define $S_L = S_C \cap L$, a Sylow $p$-subgroup of $L$. As $S_L \trianglelefteq S_C$, $Z(S_C) \cap S_L \ne 1$. However, this does not guarantee that $Z(S) \cap S_L \ne 1$. The sequence of subgroups $S_L \trianglelefteq S_C \le S$ chosen as above will be denoted $(S_L, S_C, S)$ and will be referred to as a {\it triple of Sylow $p$-subgroups}.
\end{ntn}

\begin{num}
We warn the reader that the ``hat" symbol in $\wSs(G)$ and $\wSs (H)$ has different meanings; in the former it refers to those groups that are distinguished in $G$, while in the latter it refers to subgroups that are distinguished in $H$.
\end{num}

\begin{thm}\label{noncenthm}
Let $G$ be a finite group of parabolic characteristic $p$, and let $T$ be a $p$-subgroup of $G$. Assume the following hold:
\begin{list}{\upshape\bfseries}
{\setlength{\leftmargin}{1.2cm}
\setlength{\labelwidth}{1cm}
\setlength{\labelsep}{0.2cm}
\setlength{\parsep}{0.1cm}
\setlength{\itemsep}{0cm}}
\item[${\rm(N1).}$] The group $O_C$ is purely noncentral in $G$.
\item[${\rm(N2).}$] The group $T\cdot C_G(T)$ has the form $C=O_C.H.K$, the group $H$ has parabolic characteristic $p$, and $L = O_C.H$ is a normal subgroup of $N_G(T)$.
\item[${\rm(N3).}$] There exists a triple of Sylow $p$-subgroups $(S_L, S_C, S)$ with the property that $Z(S)\cap S_L \ne 1$.
\end{list}
Then there is a $N_G(T)$-equivariant homotopy equivalence between $\Dd(G)^T$ and $\Dd(H)$.
\end{thm}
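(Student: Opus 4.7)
The proof plan is to construct, as in Theorem~\ref{ocintilde}, a chain of $N_G(T)$-equivariant homotopy equivalences, but with the terminal contraction step replaced by a reduction to the complex inside the component $H$. The first block of reductions reuses Steps~1--6 of the proof of Theorem~\ref{ocintilde} verbatim, none of which required $O_C\in\tSs(G)$; this carries $\Dd(G)^T$ to $\tSs(G)^{\leq C}_{>T}$ via $N_G(T)$-homotopy equivalences. Next, the $N_G(T)$-equivariant poset map $F(P)=P\cdot O_C$ takes $\tSs(G)^{\leq C}_{>T}$ into $\tSs(G)^{\leq C}_{\geq O_C}$, which by (N1) coincides with $\tSs(G)^{\leq C}_{>O_C}$; Theorem~\ref{homotopies}(ii) delivers an equivalence there, and Lemma~\ref{lem>M} applied with $M=O_C$ (which is $N_G(T)$-invariant since $O_C$ is characteristic in $C$, and satisfies $T\leq O_C\leq C$) yields a further equivalence with $\wSs(G)^{\leq C}_{>O_C}$.

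The central step is to push everything inside $L$. Since $L\trianglelefteq N_G(T)$ by (N2), the subposet $\wSs(G)^{\leq L}_{>O_C}$ is $N_G(T)$-stable, and I would aim to exhibit an $N_G(T)$-equivariant homotopy equivalence $\wSs(G)^{\leq L}_{>O_C}\simeq\wSs(G)^{\leq C}_{>O_C}$, for instance by invoking Theorem~\ref{homotopies}(ii) with the poset map $P\mapsto P\cap L$. The role of hypothesis (N3) is essential here: the $p$-central element $z\in Z(S)\cap S_L$ supplied by the triple $(S_L,S_C,S)$ lies in $L$, and combined with $Z(P)\leq C_G(T)\leq C$ for $P>T$ and with the normality of $L$ in $C$, one extracts a $p$-central element of $G$ sitting inside $Z(P\cap L)$. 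In particular $P\cap L$ is itself distinguished in $G$ and lies strictly above $O_C$, so the map is well defined; \ref{tildeclosed}(ii) and (iii) handle the intermediate checks that the reduced subgroups stay in the right collections.

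Having reached $\wSs(G)^{\leq L}_{>O_C}$, the canonical projection $\pi\colon L\twoheadrightarrow L/O_C=H$ gives an $N_G(T)$-equivariant order-isomorphism between the $p$-subgroups of $L$ strictly containing $O_C$ and the non-trivial $p$-subgroups of $H$. Under this bijection the $G$-distinguished subgroups in the source correspond to $H$-distinguished subgroups in the target: by (N1), a $p$-central element of $G$ inside $L$ is not in $O_C$, so it has non-trivial image in $H$; choosing the Sylow of $L$ inside $S$ as in (N3), this image sits in $Z(\pi(P))$ and in the center of a Sylow of $H$, hence is $p$-central in $H$. This identifies $\wSs(G)^{\leq L}_{>O_C}$ with $\wSs(H)$ as $N_G(T)$-posets, and because $H$ has parabolic characteristic $p$ by (N2), Theorem~\ref{propdist}(iv) gives $\wSs(H)\simeq\wBb(H)=\Dd(H)$, closing the chain.

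The hard part will be the middle restriction to $L$: producing a clean $N_G(T)$-equivariant poset map landing in $\wSs(G)^{\leq L}_{>O_C}$ and verifying that the $G$-distinguished property descends to $P\cap L$ with the strict inequality $O_C<P\cap L$ intact. This is exactly the point at which (N3) is forced, since it is the only hypothesis pulling a $p$-central element of $G$ down into $L$; without it neither the map $P\mapsto P\cap L$ nor the subsequent passage to $\wSs(H)$ would preserve distinguished structure.
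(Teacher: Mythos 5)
Your opening block of reductions is correct and matches the paper: Steps 1--6 of Theorem \ref{ocintilde} do not use $O_C\in\tSs(G)$, the poset map $F(P)=P\cdot O_C$ carries $\tSs(G)^{\le C}_{>T}$ into $\tSs(G)^{\le C}_{\ge O_C}=\tSs(G)^{\le C}_{>O_C}$ (since $O_C\notin\tSs(G)$ by (N1)), and Lemma \ref{lem>M} with $M=O_C$ gives $\wSs(G)^{\le C}_{>O_C}$. That much agrees with the paper's Step 1.

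The gap is in what you call the ``central step''. Applying Theorem \ref{homotopies}(ii) with $F(P)=P\cap L$ directly on $\wSs(G)^{\le C}_{>O_C}$ does not work, because nothing forces $P\cap L$ to remain distinguished in $G$ (or even to be strictly larger than $O_C$). If $P$ is distinguished, we know $Z(P)$ meets $\widehat\Gamma_p(G)$, but the $p$-central element witnessing this need not lie in $L$; and the element $z\in Z(S)\cap S_L$ supplied by (N3) belongs to one fixed triple and in general is not an element of $P$, let alone of $Z(P\cap L)$. This is exactly the point where the paper introduces the auxiliary collection $\mathcal T$ (\ref{tcol}), consisting of those $P$ with $Z(P)\cap Z(S)\cap Z(S_L)\ne 1$ for some triple with $P\le S_C\le S$. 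The nontrivial Step 2 of the paper, which uses a contracting homotopy through $O_{CQ}=O_p(N_C(Q))$ and leans on parabolic characteristic $p$ via Proposition \ref{chaprop}, shows $\mathcal T\hookrightarrow\wSs(G)^{\le C}_{>O_C}$ is an $N_G(T)$-equivalence; only after restricting to $\mathcal T$ does $P\mapsto P\cap L$ behave, since then a nontrivial element of $Z(P)\cap Z(S)\cap Z(S_L)$ lies both in $Z(P\cap L)$ and outside $O_C$. Your proposal skips this step entirely.

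There is a second, smaller gap at the end. The projection $L\to H$ does give an order isomorphism between $p$-subgroups of $L$ above $O_C$ and nontrivial $p$-subgroups of $H$, but this does \emph{not} restrict to an isomorphism between the $G$-distinguished subgroups on one side and $\wSs(H)$ on the other. For a general $P\in\wSs(G)^{\le L}_{>O_C}$, the $p$-central element of $G$ in $Z(P)$ need not sit in $Z(S_L)$ for a Sylow $S_L$ of $L$, so its image need not be $p$-central in $H$; conversely, a $p$-central element of $H$ need not lift to a $p$-central element of $G$. This is why the paper works with $\mathcal T^{\le L}$ and verifies the $N_G(T)$-equivalence $q_*:\mathcal T^{\le L}\to\wSs(H)$ via the Quillen-type criterion of Theorem \ref{hmap} (showing $q_*^{-1}(\wSs(H)_{\ge\overline Q})$ is equivariantly contractible, using $O_{LQ}$ and the parabolic characteristic of $H$), rather than by claiming a poset isomorphism. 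So while you have correctly located where (N1)--(N3) enter and have the right overall route, the two technical bridges, the introduction of $\mathcal T$ and the non-isomorphic descent to $\wSs(H)$, are missing, and without them the middle of the argument does not go through.
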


\begin{num}\label{n3all}
The condition (N3) implies that if $S'_C \in \syl_p(C)$, then there exists $S' \in \syl_p(G)$ such that $Z(S') \cap (S'_C \cap L) \ne 1$. There exists an element $g \in C$ with $S'_C = g S_C g^{-1}$. Define $S' = g S g^{-1}$. Since $L \trianglelefteq C$, $S'_L = S'_C \cap L = g(S_C \cap L)g^{-1}$. Thus $Z(S') \cap S'_L = g( Z(S) \cap S_L )g^{-1}$.
\end{num}

There are several examples involving sporadic finite simple groups in which a condition stronger than (N3) is satisfied: all $p$-central involutions of $G$ which lie in $C$ actually lie in $L$. This condition was used to prove a similar theorem; see \cite[Thm. 2.1 and Lem. 2.3]{mgo5}.

\begin{num}\label{landoc}
In some examples we will consider in Section 6, we have $L = C_G(O_C)$. Then whenever we have a triple $(S_L,S_C,S)$ of Sylow $p$-subgroups, we will have $Z(S) \leq L$ since $O_C \leq S$. Thus (N3) is satisfied. Also, (N3) will be satisfied if we know that $Z(S_C) \leq L$, which will be true in most cases.
\end{num}

There is no direct relationship between the elements which are $p$-central in $G$ and those which are $p$-central in $C$, or in $L$. In order to deal with this difficulty, we introduce the following subcollection of $\widehat{\mathcal{S}}(G)^{\leq C}_{>O_C}$.

\begin{num}\label{tcol}
Use the notations from \ref{fixpt}, \ref{cnotat} and \ref{triplentn} and set:
$$\mathcal{T} = \lbrace P \in \wSs(G)^{\leq C} _{>O_C} \; \Big| \; Z(P) \cap Z(S) \cap Z(S_L) \not = 1, \; \text{for some triple} \; (S_L,S_C,S) \; {\rm with} \; P \leq S_C \leq S \rbrace.$$
\end{num}

This collection is contained in $\wSs(C)$, and it contains all the $p$-centric subgroups $P$ in $C$ which properly contain $O_C$, since if $P$ is $p$-centric and $P \leq S_C \leq S$, then $Z(S) \leq C_G(P)$, implying $Z(S) \leq Z(P)$. Another collection of interest will be $\mathcal{T}^{\leq L}$; the $p$-subgroups it contains are distinguished as subgroups of $G$ and also are distinguished as subgroups of $L$. The condition $N_G(T) \leq N_G(L)$ in (N2) is needed so that $N_G(T)$ acts on $\mathcal{T}^{\le L}$. This condition will be satisfied if $L$ is the generalized Fitting subgroup of $C$ (there exists one component $E$ of $C$ with $E / Z(E) = H$).

\begin{num}\label{normt}
Let $P \in \mathcal{T}$ and assume that $O_C < Q \le P$. Then $N_P(Q) \in \mathcal{T}$.\\
It is easy to see that if $Q \le N_P(Q) \le P$ then $Z(P) \le Z(N_P(Q))$ and this suffices to show $N_P(Q) \in \mathcal{T}$.
\end{num}

\begin{proof}[Proof of Theorem \ref{noncenthm}]
The proof will consist of five steps and a number of homotopy equivalences between various intermediary collections. For the rest of the proof we assume that $G$ and $T$ satisfy all the conditions from the hypotheses of Theorem \ref{noncenthm}.

\hspace*{1cm}{\it Step 1: $\Dd(G)^T$ and $\wSs(G)^{\le C}_{>O_C}$ are $N_G(T)$-homotopy equivalent.}

The proof of Theorem \ref{ocintilde} shows that there is an $N_G(T)$-homotopy equivalence between $\Dd(G)^T$ and $\tSs(G)^{\le C}_{\ge O_C} = \tSs(G)^{\le C}_{>O_C}$, using that $O_C \not\in \tSs(G)$. Then an application of Lemma \ref{lem>M} with $M = O_C$ gives an equivalence with $\wSs(G)^{\le C}_{>O_C}$.

\hspace*{1cm}{\it Step 2: The inclusion $\mathcal{T} \hookrightarrow \wSs(G)^{\leq C} _{>O_C}$ is a $N_G(T)$-homotopy equivalence.}

We will apply Theorem \ref{homotopies}(i) once again. We need to show that $\mathcal{T}_{\geq Q}$ is equivariantly contractible for every $Q \in \widehat{\mathcal{S}}(G)^{\leq C} _{>O_C}$. The equivariance here is with respect to the action of $N_G(T) \cap N_G(Q)$. Set $O_{CQ}:=O_p(N_C(Q))$. Assume now that $P \in \mathcal{T}_{\geq Q}$ and consider the contracting homotopy given by the following string of equivariant poset maps:
$$P \geq N_P(Q) \leq N_P(Q) O_{CQ} \geq O_{CQ}.$$
In order to complete the proof we need to prove that each of the subgroups involved in the above string lies in $\mathcal{T}_{\geq Q}$. Since $Q$ is a distinguished $p$-subgroup of $G$, $N_G(Q)$ has characteristic $p$, by \ref{tildeclosed}(iv). Next, because $T < Q \leq C$ it follows that $C_G(Q) \leq C_G(T) \le C$ and thus $QC_G(Q) \leq N_{C}(Q) \leq N_G(Q)$, and by Proposition \ref{chaprop}(i), $N_{C}(Q)$ has characteristic $p$. Hence $C_{N_C(Q)}(O_{CQ})\le O_{CQ}$. But $Q \le O_{CQ}$ so $C_G(O_{CQ}) \le C_G(Q) \le N_C(Q)$ and therefore $C_G(O_{CQ}) = Z(O_{CQ})$ which implies $Z(S_C) \le Z(O_{CQ})$ for any $S_C \in \syl_p(C)$ which contains $O_{CQ}$. Extend $S_C$ to a Sylow $p$-subgroup $S$ of $G$ satisfying $Z(S) \cap S_L \ne 1$, using (N3) and \ref{n3all}. Recall that $Z(S) \leq Z(S_C)$. Since $Z(S) \cap Z(S_L) \leq Z(S_C) \leq O_{CQ}$, it follows that $O_{CQ} \in \mathcal{T}$. The fact that $N_P(Q) \in \mathcal{T}_{\ge Q}$ follows from \ref{normt}. Next consider $N_P(Q)O_{CQ}$ which is a $p$-subgroup of $N_C(Q)$ and which contains $Z(S_C)$ for any $S_C \in \syl_p(C)$ which contains it. Thus all the groups in the string are in $\mathcal{T}_{\ge Q}$.

\hspace*{1cm}{\it Step 3: The inclusion $\mathcal{T}^{\le L} \hookrightarrow \mathcal{T}$ is a $N_G(T)$-homotopy equivalence.}

Apply \ref{homotopies}(ii) with poset map $F(P)=P \cap L$ for $P \in \mathcal{T}$. Note that $Z(P) \cap Z(S) \cap Z(S_L) \le Z(P \cap L)$.

{\it Step 4:} Recall that $L \simeq O_C.H$ and the quotient group $H \simeq L/O_C$ has parabolic characteristic $p$. The quotient map will be denoted $q : L \rightarrow H$. For $M \leq L$, let $\overline{M} = q(M)$. For $Q \leq L$, denote $O_{LQ} = O_p(N_L(Q))$; for $\overline{Q} \leq H$, denote $O_{HQ} = O_p(N_H(\overline{Q}))$. If $(S_L,S_C,S)$ is a triple of Sylow $p$-subgroups defined as in \ref{triplentn}, we let $\overline{S}_L=q(S_L) \simeq S_H \in \syl_p(H)$. We prove the following:
\par
\rightskip=1cm
\leftskip=.5cm
{\it The poset map $q_*: \mathcal{T}^{\le L} \rightarrow \wSs(H)$ induced by the quotient map $q: L \rightarrow H$ is an equivariant homotopy equivalence.}
\par
\leftskip=0cm
\rightskip=0cm

We first show that $q_*(\mathcal{T}^{\le L}) \subseteq \wSs(H)$. Since $O_C$ is purely noncentral in $G$, the map $q : L \rightarrow H$ is injective on the elements of $Z(S)$ as they are $p$-central in $G$. Therefore, if $P \in \mathcal{T}^{\le L}$ then $Z(P) \cap Z(S) \cap Z(S_L) \not = 1$ for some triple of Sylow $p$-subgroups $(S_L,S_C,S)$ with $P \le S_L$, and this implies $Z(\overline{P}) \cap Z(\overline{S}_L) \not = 1$, and we have $q_*(P)= \overline{P} \in \wSs(H)$.

According to Theorem \ref{hmap}, the poset map $q_*: \mathcal{T}^{\le L} \rightarrow \widehat{\mathcal{S}}(H)$ is an equivariant homotopy equivalence if $q_*^{-1}(\widehat{\mathcal{S}}(H)_{\geq {\overline{Q}}})$ is equivariantly contractible for any $\overline{Q} \in \widehat{\mathcal{S}}(H)$.

Define $Q = q^{-1}(\overline{Q})$. Recall that $O_C \le P$, thus we have:
$$q_*^{-1}(\widehat{\mathcal{S}}(H)_{\geq \overline{Q}}) = \lbrace P \in \mathcal{T}^{\le L} \;| \;\overline{Q} \leq \overline{P} \rbrace = \lbrace P \in \mathcal{T}^{\le L} \;|\; Q \leq P \rbrace = \mathcal{T}^{\le L}_{\ge Q}\;.$$

For $P \in \mathcal{T}^{\le L}_{\geq Q}$, consider the string of equivariant poset maps given by:
$$P \geq N_P(Q) \leq N_P(Q) O_{LQ} \geq O_{LQ}\;.$$

We need to show that all of these terms lie in $\mathcal{T}^{\le L}_{\geq Q}$. The fact that $N_P(Q) \in \mathcal{T}^{\le L}_{\ge Q}$ follows from \ref{normt}. Next, we have $N_L(Q) = q^{-1}(N_H(\overline{Q}))$, using $O_C \leq Q$. Thus $O_{LQ}$ is equal to $q^{-1}(O_{HQ})$ by the correspondence theorem for normal subgroups applied to $N_L(Q) \rightarrow N_H (\overline{Q})$. Since $\overline{Q} \in \widehat{\mathcal{S}}(H)$, $N_H(\overline{Q})$ has characteristic $p$, by \ref{tildeclosed}(iv) and using our assumption that $H$ has parabolic characteristic $p$. Thus $C_H(O_{HQ}) \leq O_{HQ}$ and $N_H(O_{HQ})$ also has characteristic $p$, which follows by an application of Proposition \ref{chaprop}(ii). Next, we have $C_L(O_{LQ}) \le q^{-1}(C_H(O_{HQ})) \le q^{-1}(O_{HQ}) = O_{LQ}$, which shows that the group $O_{LQ}$ is $p$-centric in $L$. It follows that $Z(S_L) \le Z(O_{LQ})$ for every Sylow $p$-subgroup of $L$ which contains $O_{LQ}$. But by our assumption ${\rm(N3)}$, and using \ref{n3all}, the subgroup $Z(S_L)$ contains $p$-central elements of $G$; therefore it follows that $O_{LQ}$ is distinguished in $G$. Consequently $O_{LQ}$ lies in $\mathcal{T}^{\le L}_{\ge Q}$. Now consider $N_P(Q)O_{LQ}$ which is a subgroup of $N_L(Q)$. Since $O_{LQ}$ lies in every Sylow $p$-subgroup of $N_L(Q)$ and $P \in \mathcal{T}^{\le L}_{\ge Q}$ it follows that the nontrivial elements in $Z(P) \cap Z(S) \cap Z(S_L)$ also lie in $Z(N_P(Q)O_{LQ})$ and the subgroup $N_P(Q)O_{LQ}$ is indeed in $\mathcal{T}^{\le L}_{\ge Q}$ (that is $O_{LQ} \le S_L$, with $S_L$ the Sylow $p$-subgroup which contains $P$ and satisfies the required condition from \ref{tcol}).

\hspace*{1cm}{\it Step 5: There is an equivariant homotopy equivalence between $\Dd (G)^T$ and $\Dd (H)$}.

Combine the first four steps of the proof to obtain the chain of $N_G(T)$-homotopy equivalences:
$$\Dd(G)^T \simeq \wSs(G)^{\leq C}_{>O_C} \simeq \mathcal{T} \simeq \mathcal{T}^{\leq L} \simeq \wSs (H).$$
Recall that $H$ has parabolic characteristic $p$ and therefore $\wSs (H)$ and $\Dd (H)$ are $H$-homotopy equivalent, by Proposition \ref{propdist}(iii)-(iv). The proof in \cite{mgo4} of this homotopy equivalence can actually be seen to yield equivariance under any automorphism of $H$, so that here we have equivariance under $N_G(T) / O_C$, or under $N_G(T)$ with $O_C$ acting trivially.
\end{proof}

%%%%%%%%%%%%%%%%%%%%%%%%%%%%%%%%%%%%%%%%%%%%%%
\section{Some modular representation theory}%%
%%%%%%%%%%%%%%%%%%%%%%%%%%%%%%%%%%%%%%%%%%%%%%

\subsection*{On the group algebra of a group of parabolic characteristic $\mathbf{p}$}

Let $G$ be a finite group and let $\fk$ be a field of characteristic $p$, which is a splitting field for all the subgroups of $G$. Denote by $\fk G$ the group algebra of $G$ over the field of coefficients $\fk$. We are interested in the block structure of $\fk G$ and in particular its defect groups. For details on the background material used in this section the interested reader is referred to \cite{alperin}, or for a more succinct exposition to \cite{ben91a}.

Although not all the results from this Section are needed in the later proofs, the information given here provides valuable insight into the properties of the group algebras of those classes of groups considered in the next two Sections.

\begin{num}\label{charkH}
If a finite group $G$ has characteristic $p$, then $C_G(O_p(G)) \le O_p(G)$ and $\fk G$ has only one block \cite[Prop. 6.2.2]{ben91a}.
\end{num}

It is now easy to prove that:

\begin{lem}\label{localcharkH}
Let $G$ be a finite group of local characteristic $p$. Then every nonprincipal block of $\fk G$ has defect zero.
\end{lem}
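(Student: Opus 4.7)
My strategy is to use Brauer's first and third main theorems, combined with \ref{charkH}, to rule out nontrivial defect groups for any nonprincipal block.

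First I would let $b$ be a nonprincipal block of $\fk G$ with defect group $D$, and argue by contradiction, assuming $D \neq 1$. Then $N_G(D)$ is a $p$-local subgroup of $G$, so by the local characteristic $p$ hypothesis it has characteristic $p$. By \ref{charkH}, the group algebra $\fk N_G(D)$ has only one block, namely the principal block $B_0(N_G(D))$, whose defect group is a Sylow $p$-subgroup of $N_G(D)$.

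Next I would invoke Brauer's first main theorem, which gives a bijection between blocks of $\fk G$ with defect group $D$ and blocks of $\fk N_G(D)$ with defect group $D$. Under this bijection, $b$ corresponds to some block of $\fk N_G(D)$ having $D$ as a defect group; by the previous paragraph the only candidate is $B_0(N_G(D))$, which forces $D$ to be a Sylow $p$-subgroup of $N_G(D)$. On the other hand, Brauer's third main theorem tells us that the Brauer correspondent of $B_0(N_G(D))$ in $\fk G$ is the principal block $B_0(G)$. Since the first main theorem correspondence is bijective, this yields $b = B_0(G)$, contradicting the choice of $b$. Hence $D = 1$ and $b$ has defect zero.

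The argument is essentially a direct appeal to standard block-theoretic machinery, so no step poses a serious obstacle. The only point requiring a little care is checking that Brauer's first main theorem is actually populated on the $N_G(D)$ side by a block with defect group $D$; this is handled automatically, because once we know $\fk N_G(D)$ has a unique block and that block must receive $b$ under the correspondence, the defect group of $B_0(N_G(D))$ is forced to be $D$.
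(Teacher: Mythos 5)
Your proof is correct and follows essentially the same route as the paper: use local characteristic $p$ to see that $N_G(D)$ has characteristic $p$, invoke \ref{charkH} to conclude $\fk N_G(D)$ has a unique block, and then combine Brauer's first and third main theorems to force the original block to be principal. The side remark that $D$ must be a Sylow $p$-subgroup of $N_G(D)$ is a true consequence but is not needed for the contradiction.
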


\begin{proof}
If $B$ is a block of $\fk G$ with a nontrivial defect group $D$, then by Brauer's First Main Theorem there is a corresponding block $b$ of $\fk N_G(D)$. Since $G$ has local characteristic $p$, the subgroup $N_G(D)$ has characteristic $p$ and thus the only block is the principal block, by \ref{charkH}. Then Brauer's Third Main Theorem implies that $B$ is the principal block of $\fk G$.
\end{proof}

\begin{lem}\label{blpar}
Assume that a finite group $G$ has parabolic characteristic $p$. If $D$ is a defect group of a nonprincipal block of $\fk G$, then $D$ is purely noncentral.
\end{lem}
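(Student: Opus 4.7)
The plan is to argue by contradiction, mirroring the proof of Lemma \ref{localcharkH} but deploying the parabolic-characteristic hypothesis through the distinguished-subgroup language already established in \ref{tildeclosed}. Suppose for contradiction that $D$ is not purely noncentral, so that $D$ contains some $p$-central element $z \in \widehat{\Gamma}_p(G)$. In particular $D$ is a nontrivial $p$-subgroup with $D \cap \widehat{\Gamma}_p(G) \ne \emptyset$, which places $D$ in the collection $\tSs(G)$ of $p$-subgroups that meet $\widehat{\Gamma}_p(G)$.

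Next I would invoke \ref{tildeclosed}(iv): because $G$ has parabolic characteristic $p$ and $D \in \tSs(G)$, the normalizer $N_G(D)$ has characteristic $p$. By \ref{charkH}, the group algebra $\fk N_G(D)$ then has a unique block, namely its principal block.

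To finish, I would apply the Brauer correspondence exactly as in the proof of Lemma \ref{localcharkH}. By Brauer's First Main Theorem, the nonprincipal block $B$ of $\fk G$ corresponds to a block $b$ of $\fk N_G(D)$ having $D$ as a defect group; since $\fk N_G(D)$ has only the principal block, $b$ must be that principal block. Brauer's Third Main Theorem then forces $B$ itself to be the principal block of $\fk G$, contradicting the nonprincipality of $B$. Hence no such $z$ exists, and $D$ is purely noncentral.

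There is essentially no technical obstacle beyond correctly identifying where the parabolic-characteristic hypothesis enters, namely in promoting the membership $D \in \tSs(G)$ to the characteristic-$p$ property of $N_G(D)$ via \ref{tildeclosed}(iv). Everything downstream is a direct translation, in the distinguished-subgroup setup, of the Brauer-correspondence argument used in the local-characteristic-$p$ case.
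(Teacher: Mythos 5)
Your argument is correct and is essentially identical to the paper's own proof: both pass through \ref{tildeclosed}(iv) to conclude $N_G(D)$ has characteristic $p$, invoke \ref{charkH} to see $\fk N_G(D)$ has a unique block, and then use Brauer's First and Third Main Theorems to force $B$ to be principal, a contradiction. The only stylistic difference is that the paper phrases this as a direct implication rather than as an explicit proof by contradiction.
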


\begin{proof}
Let $B$ be a block of $\fk G$ with a nontrivial defect group $D$, and let $b$ denote its Brauer correspondent in $\fk N_G(D)$. If $D$ contains a $p$-central element, then according to \ref{tildeclosed}(iv), $N_G(D)$ has characteristic $p$. Thus $b$ is the principal block of $\fk N_G(D)$, and $B$ is the principal block of $\fk G$.
\end{proof}

\begin{num}\label{defectpar}
If $G$ has parabolic characteristic $p$, the only distinguished $p$-radical subgroup which is a defect group of $\fk G$ is the Sylow $p$-subgroup of $G$, and the only block with this defect group is the principal block of $\fk G$.
\end{num}

Next, we relate the defect groups of $\fk G$ to the defect groups of the group algebra $\fk C_G(t)$ of the centralizer of an element $t$ of order $p$.

\begin{lem}\label{defectcen}
Let $G$ be a finite group and let $D$ be a defect group of a block of $\fk G$. If $T \le Z(D)$ then $D$ is a defect group of $\fk C_G(T)$.
\end{lem}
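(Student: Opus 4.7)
The plan is to use the theory of Brauer pairs, as developed in \cite{alperin} or in \cite[\S 6.3]{ben91a}. The key initial observation is that $T \le Z(D) \le D$ forces $D \le C_G(T) =: C$, and moreover any element of $G$ centralizing $D$ also centralizes the subset $T \subseteq D$, so $C_G(D) \le C$. More generally, for any $p$-subgroup $D'$ of $G$ with $D \le D' \le C$ we have $T \subseteq D'$, hence $C_C(D') = C_G(D')$. This coincidence of centralizers, secured by the hypothesis $T \le Z(D)$, is the engine of the whole argument.

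My plan is to start by fixing a maximal $B$-Brauer pair $(D, e_D)$ in $G$, so that $D$ is a defect group of $B$ and $e_D$ is a block idempotent of $\fk C_G(D)$ satisfying $e_D \cdot \mathrm{Br}_D(e_B) = e_D$. Since $e_D \in Z(\fk C_G(D)) = Z(\fk C_C(D))$, the same pair $(D, e_D)$ can be interpreted as a Brauer pair inside $C$. Let $b$ denote the unique block of $\fk C$ for which $(D, e_D)$ is a $b$-Brauer pair. It would then suffice to show that $(D, e_D)$ remains a maximal $b$-Brauer pair in $C$, since the first component of a maximal Brauer pair of a block is by definition a defect group of that block.

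The core step, and the only real technical point, is this maximality check. Given any $b$-Brauer pair $(D', e') \ge (D, e_D)$ in $C$, the containment $D' \ge D \supseteq T$ forces $C_C(D') = C_G(D')$, so $e'$ lies in $Z(\fk C_G(D'))$ and the Brauer pair inclusion $(D, e_D) \le (D', e')$, being defined through the Brauer homomorphism and these same centralizers, is equally valid when the pair is regarded as living in $G$. Hence $(D', e')$ is in fact a $B$-Brauer pair in $G$ extending $(D, e_D)$, and the maximality of $(D, e_D)$ in $G$ forces $(D', e') = (D, e_D)$. I expect no further obstacles beyond this identification: the whole argument is driven by the fact that no new Brauer pairs appear over $D$ when we pass from $G$ down to $C$, which is precisely what the hypothesis $T \le Z(D)$ delivers.
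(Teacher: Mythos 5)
Your proposal is correct, and it takes a genuinely different route from the paper's proof. The paper's argument stays within the framework of Brauer correspondence: it invokes Brauer's Extended First Main Theorem to produce a block $b$ of $\fk DC_G(D)$ with defect group $D$ and $b^{N_G(D)} = B$, then climbs through the chain $DC_G(D) \le N_C(D) \le N_G(D)$ using a containment lemma on defect groups of Brauer correspondents to conclude that $b^{N_C(D)}$ again has defect group $D$, and finally applies Brauer's First Main Theorem for $N_C(D) \le C$. Your argument instead works with the Alperin--Brou\'e theory of Brauer pairs: you show that a maximal $B$-Brauer pair $(D,e_D)$ in $G$ is still a Brauer pair over $C$, and that it remains maximal there. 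The engine in both cases is the same observation — $T \le Z(D)$ forces $C_G(D') = C_C(D')$ for every $p$-subgroup $D'$ with $D \le D' \le C$ — but you exploit it through the \emph{inclusion relation} on Brauer pairs rather than through block induction.

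One point in your sketch deserves to be spelled out. The assertion that the containment $(D,e_D) \le (D',e')$ in $C$ ``is equally valid when the pair is regarded as living in $G$'' is correct, but it is not an immediate formal consequence of $C_C(D') = C_G(D')$: the general containment relation is the transitive closure of \emph{normal} containment, and a priori a chain of normal inclusions from $(D,e_D)$ to $(D',e')$ inside $C$ could pass through intermediate pairs whose $p$-groups you haven't controlled. The saving fact is that any such chain $(D,e_D) = (D_0,e_0) \trianglelefteq \cdots \trianglelefteq (D_n,e_n) = (D',e')$ is monotone, so every $D_i$ satisfies $D \le D_i \le D' \le C$, and hence $T \le Z(D_i)$ and $C_C(D_i) = C_G(D_i)$ for all $i$. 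Since normal containment $(D_i,e_i) \trianglelefteq (D_{i+1},e_{i+1})$ is defined solely via the Brauer homomorphism $(\fk C_\bullet(D_i))^{D_{i+1}} \to \fk C_\bullet(D_{i+1})$ and the $D_{i+1}$-stability of $e_i$ — all of which are literally identical objects whether computed in $C$ or in $G$ — each step of the chain is a normal containment in $G$, and your maximality argument goes through. With that elaboration the proof is complete. The trade-off relative to the paper's argument is one of infrastructure: the paper's route is self-contained within classical Brauer correspondence (and is closer in spirit to the rest of Section 4, which works with blocks and defect groups), whereas yours is shorter and more conceptual once the Brauer-pair machinery is in hand, and makes transparent exactly where the hypothesis $T \le Z(D)$ is used.
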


\begin{proof}
Let $C$ denote $TC_G(T)=C_G(T)$. Since $T \le Z(D)$ it follows that $D C_G(D) \leq N_G(D) \cap C \leq N_G(D)$. By Brauer's Extended First Main Theorem, $\fk N_G(D)$ has a block $B$ with defect group $D$, and $\fk DC_G(D)$ has a block $b$ with defect group $D$, with $b^{N_G(D)} = B$.

The Brauer correspondents $b^{N_C(D)}$ and $b^{N_G(D)}$ are defined, and $b^{N_G(D)} = (b^{N_C(D)})^{N_G(D)}$. There is a defect group $D'$ of $b^{N_C(D)}$ containing the defect group $D$ of $b$, and up to conjugacy $D'$ is contained in the defect group $D$ of $b^{N_G(D)}$; see \cite[Lem. 14.1(1)]{alperin}. Thus $D' = D$ and so $\fk N_C(D)$ has a block $b^{N_C(D)}$ with defect group $D$. Another application of Brauer's First Main Theorem to $N_C(D) \le C$ gives that $\fk C$ has a block $b^C$ with defect group $D$.
\end{proof}

\begin{num}
If $t$ is an element of order $p$ and $T := \langle t \rangle$, the index of $C_G(t)$ in $N_G(T)$ is relatively prime to $p$. Since $DC_G(D) \leq C_G(t) \trianglelefteq N_G(T)$, an application of \cite[Thm. 15.1(2)(4)(5)]{alperin} tells us that the defect groups of $\fk N_G(T)$ are the same as the defect groups of $\fk C_G(t)$.
\end{num}

Lemma \ref{defectcen} does not give any result regarding a defect group for $\fk G$, when a defect group for $\fk C_G(t)$ is given. However, the following consequence of \cite[Thm. 5.5.21]{nt} is useful.

\begin{num}
Let $t$ be an element of order $p$ in $G$ and set $C = C_G(t)$. Let $b$ be a block of $\fk C$ with defect group $Q$. The group $T = \langle t \rangle$ is normal in $C$, and since $Q$ is a $p$-radical subgroup of $C$, it follows that $T \trianglelefteq Q$ and $C_G(Q) \le C$. Thus $b^G$ is defined. Then for a suitable defect group $D$ of $b^G$, $Z(D) \le Z(Q) \le Q \le D$ and $Q = D \cap C$.
\end{num}

We record the following direct consequence of the above discussion:

\begin{prop}\label{noncen}
Let $G$ be a finite group of parabolic characteristic $p$, and let $D$ be a defect group of a nonprincipal block of $\fk G$. For any $t \in Z(D)$, the subgroup $O_p(C_G(t))$ is purely noncentral.
\end{prop}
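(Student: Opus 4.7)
The plan is to stitch together Lemma~\ref{defectcen} with Lemma~\ref{blpar}, plus the standard fact that $O_p(H)$ lies inside every defect group of every block of $\fk H$. So the entire proof amounts to transporting the purely noncentral property of $D$ from $G$ down to a defect group of $\fk C_G(t)$, then using that $O_p(C_G(t))$ must sit inside any such defect group.

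First I would note that $D$ is a $p$-group, so $Z(D)$ consists of $p$-elements; hence $T := \langle t \rangle$ is a $p$-subgroup of $G$ contained in $Z(D)$. Applying Lemma~\ref{defectcen}, $D$ is itself a defect group of some block $b$ of $\fk C_G(T) = \fk C_G(t)$. Next, since $O_p(C_G(t))$ is a normal $p$-subgroup of $C_G(t)$, it is contained in every defect group of every block of $\fk C_G(t)$ (this is the standard fact recorded in Alperin~\cite{alperin}). Applying this to $b$ gives the key containment
\[
O_p(C_G(t)) \le D.
\]

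For the last step I would invoke Lemma~\ref{blpar}: because $G$ has parabolic characteristic $p$ and $D$ is the defect group of a nonprincipal block of $\fk G$, the subgroup $D$ is purely noncentral in $G$, i.e.\ no nonidentity element of $D$ is $p$-central in $G$. Any subgroup of a purely noncentral $p$-subgroup is again purely noncentral, so $O_p(C_G(t))$ is purely noncentral in $G$, as required.

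There is really no substantive obstacle here; the statement is essentially a corollary of the two preceding lemmas once one observes that the hypothesis $t \in Z(D)$ is exactly what is needed to invoke Lemma~\ref{defectcen}, and that $D$'s role as a defect group for $\fk C_G(t)$ forces it to contain $O_p(C_G(t))$. The only thing to be mildly careful about is that ``purely noncentral'' means ``contains no $p$-central element of $G$'' (not of $C_G(t)$); but this is precisely the notion Lemma~\ref{blpar} provides for $D$, and it is inherited by any subgroup of $D$.
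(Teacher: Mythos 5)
Your proof is correct and follows exactly the same route as the paper's: invoke Lemma~\ref{blpar} to get that $D$ is purely noncentral, invoke Lemma~\ref{defectcen} to get that $D$ is a defect group of a block of $\fk C_G(t)$, and then use the standard fact that $O_p(C_G(t))$ lies in every defect group of $\fk C_G(t)$. You merely spell out a bit more explicitly the two background facts (normal $p$-subgroups sit in every defect group, and subgroups of a purely noncentral $p$-group are purely noncentral) that the paper leaves implicit.
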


\begin{proof}
The subgroup $D$ is purely noncentral by Lemma \ref{blpar}, and according to Lemma \ref{defectcen} the subgroup $D$ is also a defect group of a block of $\fk C_G(t)$. Thus $O_p(C_G(t)) \leq D$.
\end{proof}

\subsection*{The group ring of $\mathbf{O_C.H.K}$}

Assume that $C$ is a finite group of the form $C = O_C.H.K$, with $O_C = O_p(C)$. Set $\tH = C/O_C = H.K$. We also assume that $K = \tH / H$ is either a $p$-group or a $p'$-group, and that $H$ is either a group of Lie type in characteristic $p$ or a group of local characteristic $p$. These cases will be significant to the study of those groups we are interested in.

\begin{lem}\label{defcprime}
Let $C = O_C.H.K$ with $O_C = O_p(C)$, and assume that $K$ is a $p'$-group and that $H$ has local characteristic $p$. The defect groups of blocks of $\fk C$ are either equal to $O_C$ or are Sylow $p$-subgroups of $C$.
\end{lem}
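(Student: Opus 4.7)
The plan is to strip away the two normal layers of $C$ in turn: first the normal $p$-subgroup $O_C$, then the $p'$-quotient $K=\tH/H$, reducing to a statement about blocks of $\fk H$ which is already handled by Lemma \ref{localcharkH}.

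First I would invoke the standard block theory of a normal $p$-subgroup (see e.g.\ \cite[Ch.\ 14]{alperin} or \cite[Ch.\ 6]{ben91a}): since $O_C = O_p(C)$, the canonical surjection $\pi : C \twoheadrightarrow C/O_C = \tH$ identifies the blocks of $\fk C$ with the blocks of $\fk\tH$; moreover every defect group $D$ of a block $B$ of $\fk C$ contains $O_C$, and $\bar D := D/O_C$ is a defect group of the corresponding block $\bar B$ of $\fk\tH$. It therefore suffices to show that every defect group of a block of $\fk\tH$ is either trivial or a Sylow $p$-subgroup of $\tH$, for then lifting through $\pi$ yields $D=O_C$ or $D \in \syl_p(C)$ respectively.

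Next I would pass to the normal subgroup $H \trianglelefteq \tH$ with $p'$-quotient $\tH/H = K$. Every block $\bar B$ of $\fk\tH$ covers some block $b$ of $\fk H$, and because $[\tH:H]=|K|$ is coprime to $p$ the Sylow $p$-subgroups of $\tH$ and $H$ coincide. By the usual Clifford/Fong theory for $p'$-extensions (\cite[Thm.\ 15.1]{alperin}), the defect group of $\bar B$ is $\tH$-conjugate to a defect group of $b$, so it lies in $H$. Hence the defect groups of blocks of $\fk\tH$ are exactly (conjugates of) the defect groups of blocks of $\fk H$.

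Now I apply Lemma \ref{localcharkH}: since $H$ has local characteristic $p$, every nonprincipal block of $\fk H$ has defect zero and the principal block has a Sylow $p$-subgroup of $H$ as its defect group. So the defect groups of blocks of $\fk\tH$ are either trivial or Sylow $p$-subgroups of $\tH$, and lifting through $\pi$ gives the desired dichotomy: defect groups of blocks of $\fk C$ are either $O_C$ or Sylow $p$-subgroups of $C$.

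The main obstacle is not computational but bookkeeping: one must make sure that the two defect-group correspondences (mod $O_C$ and then across the $p'$-extension $H \trianglelefteq \tH$) are invoked in the correct order and compatibly, and that Brauer/Clifford correspondence is being applied in the valid range. Performing the $O_C$-reduction first is key, since this guarantees $O_p(\tH)$ meets $H$ trivially in the sense relevant for the covering argument and lets us cite Lemma \ref{localcharkH} directly.
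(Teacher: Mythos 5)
Your overall strategy is the same as the paper's: strip off $O_C$ by defect-group correspondence for a normal $p$-subgroup, pass from $\tH$ to $H$ across the $p'$-extension, and invoke Lemma~\ref{localcharkH}. One overclaim, however, should be corrected. It is not true in general that the quotient $\pi: C \twoheadrightarrow \tH$ induces a bijection between blocks of $\fk C$ and blocks of $\fk\tH$, even when the kernel is $O_p(C)$. For instance, take $C = S_3$ with $p=3$, $O_C = A_3$, $\tH \cong C_2$: then $\fk C$ has a unique block (since $C$ has characteristic $3$), while $\fk\tH$ is semisimple with two blocks. The algebra surjection $\fk C \to \fk\tH$ has nilpotent kernel, but its restriction $Z(\fk C) \to Z(\fk\tH)$ need not be surjective, so block idempotents downstairs need not lift to block idempotents upstairs.

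What is true, and is all you actually use, is the one-directional result cited by the paper \cite[Thm.~9.9(b)]{n98}: given a block $B$ of $\fk C$ with defect group $D$, one has $O_C \leq D$ and there exists a block $\widetilde b$ of $\fk\tH$ dominated by $B$ whose defect group is $D/O_C$. With this substitution, the remainder of your argument — covering a block $b$ of $\fk H$, using $\gcd(|K|,p)=1$ so that the defect group of $\widetilde b$ lies in $H$ and agrees with that of $b$ (via \cite[Thm.~15.1(2)]{alperin}), and then applying Lemma~\ref{localcharkH} — is correct and matches the paper's proof.
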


\begin{proof}
Let $B$ be a block of $\fk C$ with defect group $D$. Since $O_C$ is a normal $p$-subgroup, $O_C \leq D$, and there exists a block $\widetilde{b}$ of $\tH$ dominated by $B$ (or contained in $B$) with a defect group equal to $D / O_C$ \cite[Thm 9.9(b)]{n98}. Next, there exists a block $b$ of $\fk H$ which is covered by the block $\widetilde{b}$. If $D'$ is a defect group of $b$, then there exists a defect group $\widetilde{D}$ of $\widetilde{b}$ such that $D' = \widetilde{D} \cap H$ \cite[Thm. 15.1(2)]{alperin}. Since $K$ is a $p'$-group, we must have $D' = \widetilde{D}$, which up to conjugacy equals $D / O_C$. We have assumed that $H$ has local characteristic $p$, and all of the nonprincipal blocks of $\fk H$ have defect zero. So either $D = O_C$ or $D / O_C \in \syl_p(H)$ and $D \in \syl_p(C)$.
\end{proof}

\begin{lem}\label{defcp}
Let $C = O_C.H.K$ with $O_C = O_p(C)$. Assume $K$ is a $p$-group and that $H$ is a finite simple group of Lie type in characteristic $p$. Then $\fk C$ has at most two blocks, and if a nonprincipal block exists, it has defect group of the form $D = O_C.K$.
\end{lem}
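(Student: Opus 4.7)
The approach parallels the proof of Lemma \ref{defcprime}. Let $B$ be a block of $\fk C$ with defect group $D$. Since $O_C$ is a normal $p$-subgroup, $O_C\le D$, and $B$ dominates a unique block $\widetilde{b}$ of $\fk\tH$ with defect group $\widetilde{D}=D/O_C$ by \cite[Thm.\ 9.9(b)]{n98}. The block $\widetilde{b}$ in turn covers a block $b$ of $\fk H$ with a defect group $D'$ equal, up to conjugation, to $\widetilde{D}\cap H$ by \cite[Thm.\ 15.1(2)]{alperin}. The new feature relative to Lemma \ref{defcprime} is that $K=\tH/H$ is now a $p$-group rather than a $p'$-group, and one must argue differently to control $\widetilde{D}$ from $D'$.

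Next I would invoke the classical fact that a finite simple group $H$ of Lie type in its defining characteristic $p$ has exactly two $p$-blocks: the principal block, with defect group a Sylow $p$-subgroup $S_H$ of $H$, and a block of defect zero whose unique simple module is the Steinberg module $\st_H$ of dimension $|H|_p$. Both blocks are invariant under every automorphism of $H$ (the principal block trivially, and the Steinberg block since it is characterized among simples by being projective of dimension $|H|_p$), so each is stable under $\tH$-conjugation.

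Since $b$ is $\tH$-stable, $K$ is a $p$-group, and $\fk$ has characteristic $p$, the cohomology $H^2(K,\fk^\times)$ vanishes (as $\fk^\times$ is $p'$-torsion). Clifford theory then gives that each simple constituent of $b$ extends uniquely to a simple $\fk\tH$-module, and consequently there is a unique block $\widetilde{b}$ of $\fk\tH$ covering $b$; its defect group $\widetilde{D}$ satisfies $\widetilde{D}\cap H=D'$ and $\widetilde{D}H=\tH$, so $|\widetilde{D}|=|D'|\cdot|K|$. In the two cases: if $b$ is the principal block, then $D'=S_H$, $\widetilde{D}\in\syl_p(\tH)$, so $D\in\syl_p(C)$ and $B$ is the principal block of $\fk C$; if $b$ is the Steinberg block, then $D'=1$ and $\widetilde{D}$ has order $|K|$ with $\widetilde{D}\cap H=1$, whence $D=O_C.\widetilde{D}$ has the form $O_C.K$.

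The main obstacle I anticipate is pinning down $|\widetilde{D}|=|K|$ in the Steinberg case, since this is where the $p$-group nature of $K$ is essential. The cleanest route uses the dimension formula: the extended Steinberg module has dimension $|H|_p$, and the standard divisibility $|\tH|_p/|\widetilde{D}|$ dividing the dimension, combined with the bound $|\widetilde{D}|\le|K|$ coming from $\widetilde{D}\cap H=1$, forces $|\widetilde{D}|=|K|$. Combined with the reduction of the first paragraph, this yields the two claimed blocks of $\fk C$ and the shape $O_C.K$ of the defect group of any nonprincipal block.
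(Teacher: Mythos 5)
Your overall strategy --- passing through domination to $\tH = C/O_C$, then through covering to $H$, using the two-block structure of $\fk H$, and climbing back up --- is exactly the paper's route. But two of the intermediate steps wobble.

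The Clifford-theoretic argument for a unique covering block is not correct as stated. Vanishing of $H^2(K,\fk^\times)$ lets a $\tH$-\emph{stable} simple $\fk H$-module extend, but the simple modules lying in a $\tH$-stable block are not themselves $\tH$-stable in general (a graph or field automorphism of a Lie type group typically permutes nontrivial simples in the principal block); the non-stable ones do not extend at all, so ``each simple constituent of $b$ extends uniquely'' can fail even though $b$ is stable. The fact you actually need --- that when $K=\tH/H$ is a $p$-group every block of $\fk H$ is covered by a unique block of $\fk\tH$ --- is \cite[Cor.~9.6]{n98}, which is the paper's citation, and is proved by noting that the $\tH$-invariant central idempotent $e_b$ lies in $Z(\fk\tH)$ and remains primitive there, not via extensions of simples.

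Your ``main obstacle'' paragraph is unnecessary. You already wrote $\widetilde{D}\cap H = D'$ and $\widetilde{D}H=\tH$; the second equality is precisely \cite[Thm.~15.1(4)]{alperin} (a defect group of a covering block maps onto a Sylow $p$-subgroup of the stabilizer of $b$ modulo $H$, and here the stabilizer is all of $\tH$ since both blocks of $\fk H$ are invariant). From these two facts $|\widetilde{D}|=|D'|\,|K|$ is immediate, and in the Steinberg case $D'=1$ forces $\widetilde{D}\cong K$ with nothing further to pin down. The paper simply cites \cite[Thm.~15.1(2)(4)]{alperin} at this point, which closes the argument cleanly; the dimension-formula detour is both redundant and itself rests on a nontrivial divisibility theorem (that $p^{a-d}$ divides the dimension of a simple module in a block of defect $d$) that would need its own citation.
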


\begin{proof}
Note that $\fk H$ has exactly two blocks, the principal block $b_0$ and a block $b_1$ of defect zero (containing the Steinberg module). These are both invariant under the action of $\tH$. Since $K$ is a $p$-group, each block of $\fk H$ is covered by a unique block of $\tH$ \cite[Cor. 9.6]{n98}. So $\fk \tH$ has exactly two blocks, the principal block $\widetilde{b}_0$ (covering $b_0$) and one nonprincipal block $\widetilde{b}_1$ (covering $b_1$). The defect group $\widetilde{D}$ of the nonprincipal block $\widetilde{b}_1$ satisfies $\widetilde{D} \cap H = 1$ and has order equal to the order of $K$ \cite[Thm. 15.1(2)(4)]{alperin}. Thus $\widetilde{D}$ is isomorphic to $K$. (Note this shows $\tH = H:K$ is a split extension.) Finally, since $O_C$ is a normal $p$-subgroup of $C$, each block $B$ of $\fk C$ dominates some block of $\fk \tH$ and each block of $\fk \tH$ is dominated by exactly one block of $\fk C$. Thus either $\fk C$ has one block only (dominating both blocks of $\fk \tH$), or $\fk C$ has two blocks, the principal block $B_0$ and one nonprincipal block $B_1$ with defect group $D$. We have $O_C \leq D$ and $D / O_C = \widetilde{D} \simeq K$.
\end{proof}

\begin{lem}\label{comp}
Let $C = O_C.H.K$ with $O_C = O_p(C)$. Assume $K$ is a $p$-group and that $H$ is a finite simple group of Lie type in characteristic $p$. Denote $L = O_C.H$, and assume that $L$ is the generalized Fitting subgroup of $C$. Then $\fk C$ has exactly two blocks.
\end{lem}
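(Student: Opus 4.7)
The plan is to combine Lemma \ref{defcp}, which already gives that $\fk C$ has at most two blocks, with an explicit construction of a simple $\fk C$-module lying outside the principal block; producing one such module upgrades the bound to equality.

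The non-principal simple module will come from the Steinberg module of $H$. Concretely, I would start with the Steinberg $\fk H$-module $\st_H$: since $H$ is a finite simple group of Lie type in characteristic $p$, $\st_H$ is the unique simple projective $\fk H$-module and it constitutes on its own the non-principal block $b_1$ of $\fk H$ (of defect zero). Because $\st_H$ is preserved up to isomorphism by every automorphism of $H$, it is $K$-invariant. Using the splitting $\tH = H:K$ recorded in the proof of Lemma \ref{defcp}, I would extend $\st_H$ to a simple $\fk \tH$-module $\widetilde{\st}$. The extension exists because the only obstruction lies in $H^2(K, \fk^\times)$, which vanishes since $K$ is a $p$-group and $\fk^\times$ has no non-trivial $p$-torsion in characteristic $p$; equivalently one may invoke the $\mathrm{Aut}(H)$-equivariant presentation of $\st_H$ as an alternating sum of modules induced from the parabolic subgroups of $H$.

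I would then inflate $\widetilde{\st}$ along the quotient $C \to C/O_C = \tH$ to obtain a simple $\fk C$-module $M$, and verify that $M$ lies in a non-principal block. Restricting $M$ to $L$ gives the inflation of $\st_H$ along $L \to L/O_C = H$, a simple $\fk L$-module dominating $b_1$ and therefore sitting in a non-principal block of $\fk L$. Since the principal block of $\fk C$ can cover only the principal block of the normal subgroup $L$, the block of $\fk C$ containing $M$ is forced to be non-principal. Combined with the upper bound of Lemma \ref{defcp}, this produces exactly two blocks.

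The generalized Fitting hypothesis $L = F^*(C)$ plays a structural role: it forces $C_C(L) \le Z(L) \le O_C$, so that $K = C/L$ embeds faithfully (modulo the center) into $\mathrm{Out}(L)$ and the semidirect structure $\tH = H:K$ is non-degenerate; this is what keeps the Clifford-theoretic extension argument for $\st_H$ clean and rules out the degenerate possibility in Lemma \ref{defcp} where a single block of $\fk C$ would dominate both blocks of $\fk \tH$. The main obstacle I anticipate is the verification that $\st_H$ extends from $H$ to $\tH$, which is classical but requires either the vanishing of $H^2(K,\fk^\times)$ or the parabolic alternating-sum model of the Steinberg module; once that step is in hand, the check via block covering that the inflated module really does sit outside the principal block of $\fk C$ is routine.
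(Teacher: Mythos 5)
The construction of the candidate module $M$ is fine — the $K$-invariance of $\mathrm{St}_H$, the vanishing of $H^2(K,\fk^\times)$ for $K$ a $p$-group and $\fk$ of characteristic $p$, and the inflation to $\fk C$ all go through. The gap is in the sentence claiming that the inflation of $\mathrm{St}_H$ to $L$, because it dominates $b_1$, ``therefore'' sits in a non-principal block of $\fk L$. Block domination over a normal $p$-subgroup is a surjection from blocks of $\fk L$ onto blocks of $\fk H$ but need not be injective; in particular the \emph{principal} block of $\fk L$ may perfectly well dominate \emph{both} $b_0$ and $b_1$. This happens exactly when $\fk L$ has a single block, e.g.\ whenever $L$ has characteristic $p$. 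A concrete illustration of the danger: take $L = p^n\,{:}\,L_n(p)$ with $O_C = p^n$ the natural module and $H = L_n(p)$. Here $C_L(O_C) = O_C$, so $\fk L$ has one block, and the inflated Steinberg module lands in the principal block — the conclusion of the lemma fails, and this is precisely the configuration the hypothesis $L = F^*(C)$ is there to exclude. Your closing appeal to $L = F^*(C)$ does not fill this hole: the observation that $C_C(L) \le O_C$ and that $K$ embeds in $\mathrm{Out}(L)$ does not by itself prevent $\fk L$, or $\fk C$, from being block-indecomposable.

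The way $L=F^*(C)$ actually enters — and the route the paper takes — is more direct. Since $L = F^*(C) = O_C\cdot E$ with $E$ the unique component, $E$ centralizes $O_C$ by general theory (\cite[31.6(2)]{a00}), so $C_C(O_C) \ge Z(O_C)E$, and using also that $K$ is a $p$-group one checks that $C/C_C(O_C)$ is a $p$-group. Navarro's Theorem 9.10 then gives a \emph{bijection} between the blocks of $\fk C$ and the blocks of $\fk(C/O_C) = \fk\tH$; combined with the count of two blocks for $\fk\tH$ from the proof of Lemma~\ref{defcp}, the lemma follows at once, with no need to analyze $\fk L$ at all. Your inflation-and-covering strategy can be repaired by the same input — show $C/C_C(O_C)$ (or $L/C_L(O_C)$) is a $p$-group and invoke the block bijection — but as written the crucial non-degeneracy step is asserted rather than proved.
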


\begin{proof}
Note that $C$ has one component (a quasisimple group) $E$ with center $Z(E)$ a $p$-group and $E / Z(E) = H$. Elements of the component commute with elements of $O_C$ \cite[31.6(2)]{a00}, so that $C / C_C(O_C)$ is a $p$-group. This implies that there is a bijection between the blocks of $\fk C$ and the blocks of $\fk \tH$ \cite[Thm. 9.10]{n98}.
\end{proof}

%%%%%%%%%%%%%%%%%%%%%%%%%%%%%%%%%%%%%%%%%%%%%%%%%%%%%%%%%%%%%%%%%%%%%%%%%%%%%%%%%%%%%%%%%%%%%%
\section{The vertices of the indecomposable summands of reduced Lefschetz module}%%%%%%%%%%%%%
%%%%%%%%%%%%%%%%%%%%%%%%%%%%%%%%%%%%%%%%%%%%%%%%%%%%%%%%%%%%%%%%%%%%%%%%%%%%%%%%%%%%%%%%%%%%%%

In this section we determine the vertices of the indecomposable summands of the reduced Lefschetz module associated to the complex of $p$-centric and $p$-radical subgroups, denoted $\Dd(G)$, in a group with parabolic characteristic $p$ which also has certain $p$-local properties. We start by reviewing a few basic facts on Lefschetz modules. All modules are assumed to be finitely generated.

The representation ring or the Green ring $a(G)$ has generators the isomorphism classes $[M]$ of $\fk G$-modules and relations given by direct sums and $\fk$-tensor products. Since the Krull-Schmidt theorem applies, each element of the Green ring can be written as $[M]= \sum n_i [M_i]$, where $[M_i]$ are isomorphism classes of indecomposable $\fk G$-modules. The additive structure of $a(G)$ is of a free abelian group with one generator for each class $[M_i]$.

The Grothendieck ring $R(G)=a(G)/a_0(G)$ is a quotient ring of the Green ring $a(G)$ by the ideal spanned by difference elements of the form $M_2-M_1 -M_3$ where $0 \rightarrow M_1 \rightarrow M_2 \rightarrow M_3 \rightarrow 0$ is a short exact sequence of $\fk G$-modules. Each module is identified with its simple factors from the composition series, so two modules with the same composition factors are identified in the Grothendieck ring. The Grothendieck ring of $G$ over $\fk$ is a free abelian group on the set of isomorphism classes of simple $\fk G$-modules. The simple modules are determined by their (complex) characters and the Grothendieck ring is isomorphic to the ring of characters.

When a group $G$ acts admissibly on a simplicial complex $\Delta$, we can construct the {\it Lefschetz module} by taking the alternating sum of the $\fk$-vector spaces spanned by the chain groups. To obtain the {\it reduced Lefschetz module}, subtract the trivial one dimensional representation. The $n^{\rm th}$ chain group $C_n(\Delta; \fk)$ is a permutation module and a $\fk$-module with oriented simplices as generators. Thus the reduced Lefschetz module is a virtual module, an element of the Green ring:
$$\tL_G(\Delta; \fk) = \sum_{\sigma \in \Delta /G} (-1)^{|\sigma|} \ind _{G_{\sigma}}^G (\fk)- \fk $$

The reduced Lefschetz module associated to a subgroup complex $\Delta$ is not always projective; however, it was shown by Th\'{e}venaz \cite[Thm. 2.1]{th87} that this virtual module turns out to be, in many cases, projective relative to a collection of very small order $p$-groups. Specifically, $\tL_G(\Delta;\fk)$ is $\mathcal{X}$-relatively projective, where $\mathcal{X}$ is a collection of $p$-subgroups such that $\Delta^Q$ is acyclic (for example contractible) for every $p$-subgroup $Q$ which is not in $\mathcal{X}$.

Information about fixed point sets leads to details about the vertices of indecomposable summands of this virtual module. The following theorem is due to Robinson \cite[in proof of Cor. 3.2]{rob88}; also see \cite[Lem. 1]{sa06}. This result is our main tool for finding vertices for reduced Lefschetz modules.

\begin{thm}[Robinson]\label{fixedrob}
The number of indecomposable summands of $\widetilde{L}_G(\Delta;k)$ with vertex $Q$ is equal to the number of indecomposable summands of $\widetilde{L}_{N_G(Q)}(\Delta ^Q;k)$ with vertex $Q$.
\end{thm}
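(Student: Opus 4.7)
The plan is to apply the Brauer construction $M \mapsto M(Q) := M^Q / \sum_{R<Q} \mathrm{tr}_R^Q(M^R)$, an additive functor from $\fk G$-modules to $\fk N_G(Q)$-modules that extends linearly to virtual modules in the Green ring. The starting observation is that for any finite $G$-set $\Omega$ one has a canonical isomorphism $(\fk\Omega)(Q) \cong \fk(\Omega^Q)$ of $\fk N_G(Q)$-modules: nontrivial $Q$-orbits are absorbed into the trace image from a proper subgroup of $Q$, while the $Q$-fixed basis vectors descend to a permutation basis of the quotient (the traces from $R<Q$ act on them as the scalar $[Q:R]\equiv 0\pmod{p}$).

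Applying this termwise to the chain modules $\fk(\Delta_n)$ in $L_G(\Delta;\fk)=\sum_n(-1)^n\fk(\Delta_n)$, and using $(\Delta_n)^Q=(\Delta^Q)_n$ together with $\fk(Q)=\fk$, I obtain the fundamental identity
\[
\tilde L_G(\Delta;\fk)(Q) \;=\; \tilde L_{N_G(Q)}(\Delta^Q;\fk)
\]
in the Green ring of $N_G(Q)$. This reduces the theorem to a purely algebraic claim: for any virtual $\fk G$-module $M$, the multiplicity of vertex-$Q$ indecomposable summands in $M$ equals the multiplicity of vertex-$Q$ indecomposable summands in $M(Q)$.

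This algebraic claim is a standard consequence of Broué's vertex-sensitivity of the Brauer construction combined with Green correspondence. For an indecomposable $\fk G$-module $N$ with vertex $V$: either $N(Q)=0$ (when $Q$ is not $G$-subconjugate to $V$), or $V$ is $G$-conjugate to $Q$ and $N(Q)$ has a unique vertex-$Q$ summand, namely the Green correspondent $f(N)$, or $Q$ is strictly $G$-subconjugate to $V$ and every indecomposable summand of $N(Q)$ has vertex in $N_G(Q)$ strictly larger than $Q$. Summing over an integer-linear decomposition of $M$ into indecomposables, the vertex-$Q$ summands on the two sides correspond bijectively via $f$, and the multiplicities agree. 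The hardest technical point is the third case above: one must know that indecomposable summands of $N(Q)$ arising from modules of strictly larger vertex never contribute vertex-$Q$ pieces, which is precisely the content of Broué's refined vertex analysis and is where I would expect to do most of the real work.
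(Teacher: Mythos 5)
The paper does not prove this theorem; it is quoted as Robinson's result, with pointers to Robinson \cite[Cor.\ 3.2]{rob88} and Sawabe \cite[Lem.\ 1]{sa06}. So there is no in-paper proof to compare against, and I will evaluate your argument on its own.

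Your proof is correct, and it is essentially the standard argument one finds for this statement (Sawabe's proof is along exactly these lines). The identity $\tilde L_G(\Delta;\fk)(Q)=\tilde L_{N_G(Q)}(\Delta^Q;\fk)$ does follow termwise from $(\fk\Omega)(Q)\cong\fk(\Omega^Q)$ and $(\Delta_n)^Q=(\Delta^Q)_n$; note that both of these quietly use the admissibility of the action, which guarantees that each $C_n(\Delta;\fk)$ is genuinely a permutation module on $\Delta_n$ (no orientation signs) and that a setwise $Q$-fixed simplex is pointwise fixed, so that $\Delta^Q$ is a subcomplex with $n$-simplices exactly $(\Delta_n)^Q$; since the paper assumes admissibility throughout, this is fine. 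Your ``third case'' is the genuine content and you flag it correctly. One can close it cleanly without arguing summand-by-summand about orbit stabilizers: by Brou\'e's theorem, for any $p$-permutation $\fk G$-module $M$ and any indecomposable $p$-permutation $\fk G$-module $N'$ with vertex $Q$, the multiplicity of $N'$ in $M$ equals the multiplicity of $N'(Q)$ in $M(Q)$ as $\fk[N_G(Q)/Q]$-modules; applying this with $M=N$ indecomposable of vertex $V$ strictly containing a conjugate of $Q$ gives multiplicity zero for every such $N'$, so $N(Q)$ contributes no vertex-$Q$ summands. Equivalently, since $Q$ acts trivially on $M(Q)$, the vertex-$Q$ summands of $M(Q)$ as an $\fk N_G(Q)$-module are precisely the projective summands of $M(Q)$ as an $\fk[N_G(Q)/Q]$-module, and Brou\'e's correspondence says their multiplicity equals the multiplicity of vertex-$Q$ summands of $M$. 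Extending $\Bbb Z$-linearly over the (virtual) decomposition of $\tilde L_G(\Delta;\fk)$ into indecomposable $p$-permutation modules then gives the theorem, exactly as you say.
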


We give below a few applications of Robinson's Theorem \ref{fixedrob}.

\begin{lem}\label{cenvertlef}
Assume $G$ has parabolic characteristic $p$. Suppose that $t$ is an element of order $p$ in $G$ such that $O_p(C_G(t))$ contains central elements. Then no vertex of the reduced Lefschetz module $\tL_G(\Dd(G); \fk)$ contains a conjugate of $t$.
\end{lem}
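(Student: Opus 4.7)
The plan is to combine Robinson's Theorem~\ref{fixedrob} with Theorem~\ref{ocintilde}. By Robinson, for every $p$-subgroup $V$, the number of indecomposable summands of $\tL_G(\Dd(G); \fk)$ with vertex (conjugate to) $V$ equals the number of such summands of $\tL_{N_G(V)}(\Dd(G)^V; \fk)$. Thus it is enough to prove that $\Dd(G)^V$ is $N_G(V)$-contractible for every $p$-subgroup $V$ of $G$ that contains a $G$-conjugate of $t$; the corresponding virtual Lefschetz module will then vanish in the Green ring, forcing zero summands with vertex $V$.

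Since vertices are only determined up to $G$-conjugacy, I will replace $V$ by a conjugate and assume $t \in V$. Setting $T = \langle t \rangle$, the hypothesis gives $O_p(T \cdot C_G(T)) = O_p(C_G(t)) \in \tSs(G)$, so Theorem~\ref{ocintilde} applies to $T$ and yields that $\Dd(G)^T$ is $N_G(T)$-contractible. By the equivariant characterization~\ref{equivfixed}, this upgrades to contractibility of $(\Dd(G)^T)^H = \Dd(G)^{\langle T, H \rangle}$ for every subgroup $H \le N_G(T)$.

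When $t$ lies in $Z(V)$, every element of the $p$-group $V$ centralizes $t$ (since $\mathrm{Aut}(\langle t \rangle)$ has order coprime to $p$), so $V \le N_G(T)$, and taking $H = V$ immediately gives contractibility of $\Dd(G)^V$. In the general case where $V$ does not normalize $T$, I will replace $T$ by the $V$-normal closure $T^* = \langle t^v : v \in V\rangle \le V$; by construction $V \le N_G(T^*)$, so contractibility of $\Dd(G)^V$ will then follow from Theorem~\ref{ocintilde} applied to $T^*$ together with~\ref{equivfixed}.

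The hard part will be verifying the hypothesis of Theorem~\ref{ocintilde} for $T^*$, namely that $O_p(T^* \cdot C_G(T^*))$ lies in $\tSs(G)$. Each generator of $T^*$ is a $G$-conjugate of $t$, so the given $p$-central element $z \in O_p(C_G(t))$ provides conjugates $z^v$ in each $O_p(C_G(t^v))$; one must then extract a single $p$-central element of $G$ sitting inside the $p$-core of $T^* \cdot C_G(T^*)$, exploiting the characteristicity of these $p$-cores, the intersection structure $C_G(T^*) = \bigcap_v C_G(t^v)$, and \ref{tildeclosed}(iii)--(iv) together with the parabolic characteristic~$p$ hypothesis.
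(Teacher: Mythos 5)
Your overall strategy---invoke Robinson's Theorem~\ref{fixedrob} and show that the fixed point set under any $p$-subgroup $V$ containing a conjugate of $t$ has vanishing reduced Lefschetz module---is the same as the paper's. Your treatment of the special case $t \in Z(V)$ is sound: since $t \in Z(V)$ implies $V \le C_G(t) \le N_G(T)$, the $N_G(T)$-contractibility of $\Dd(G)^T$ (from Theorem~\ref{ocintilde}) gives ordinary contractibility of $(\Dd(G)^T)^V = \Dd(G)^V$ via~\ref{equivfixed}.

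The genuine gap is in your handling of the general case. You propose replacing $T$ by its $V$-normal closure $T^* = \langle t^v : v \in V \rangle$ and applying Theorem~\ref{ocintilde} to $T^*$, but you never establish the hypothesis $O_p(T^* \cdot C_G(T^*)) \in \tSs(G)$, and there is no reason to expect this to hold. The conjugates $z^v$ of your given $p$-central element live in the various $O_p(C_G(t^v))$, but $C_G(T^*) = \bigcap_v C_G(t^v)$ can be dramatically smaller than any single $C_G(t^v)$, and $O_p(T^* \cdot C_G(T^*))$ bears no controllable relation to the individual $O_p(C_G(t^v))$. In fact $T^*$ itself can be most or all of $V$, and you have no handle on whether $O_p(V C_G(V))$ meets the $p$-central elements. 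This is not a verification detail you have postponed; it is the crux, and your sketch gives no route to it.

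The missing tool is Smith theory, which is what the paper uses. Because $\Dd(G)^t$ is contractible, it is in particular mod-$p$ acyclic, and \cite[Thm.~VII.10.5(b)]{brown} (iterated along a subnormal $p$-chain from $\langle t \rangle$ up to $Q$) gives that $\Dd(G)^Q$ is mod-$p$ acyclic for \emph{every} $p$-subgroup $Q$ containing $t$, with no condition on $Q$ beyond containing $t$. This sidesteps entirely the need to re-verify the hypothesis of Theorem~\ref{ocintilde} for larger subgroups. One then observes that the reduced Lefschetz module $\tL_{N_G(Q)}(\Dd(G)^Q;\fk)$ has no indecomposable summand of vertex $Q$, and Robinson finishes the argument. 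Your proposal essentially reproves the cyclic case of a fact you already have (Theorem~\ref{ocintilde}) and stalls exactly where Smith theory would carry the argument through.
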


\begin{proof}
Recall that according to Theorem \ref{ocintilde}, the fixed point set $\Dd(G)^t$ is contractible. Hence $\Dd(G)^t$ is mod-$p$ acyclic, and an application of Smith theory \cite[Thm. VII.10.5(b)]{brown} gives that $\Dd(G)^Q$ is mod-$p$ acyclic for any $p$-group $Q$ containing $t$. Therefore $\tL_{N_G(Q)}(\Dd(G)^Q; \fk) = 0$. Then Theorem \ref{fixedrob} implies that the reduced Lefschetz module $\tL_G(\Dd(G); \fk)$ has no indecomposable summand with vertex $Q$ containing a conjugate of $t$.
\end{proof}

\begin{prop}\label{vertexQ}
Assume $G$ has parabolic characteristic $p$, and let $Q$ be a vertex of an indecomposable summand of the reduced Lefschetz module $\tL_G(\Dd(G); \fk)$. Assume that $t$ is an element of order $p$ in the center $Z(Q)$ such that conditions (N1), (N2) and (N3) hold for $C = C_G(t)$. Then $Q$ is a vertex of a summand of $\tL_C(\Dd(H); \fk)$.
\end{prop}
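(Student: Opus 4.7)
The plan is to apply Robinson's theorem (\ref{fixedrob}) in $G$, transport the resulting information to $H$ via the homotopy equivalence of Theorem \ref{noncenthm}, and apply Robinson's theorem once more inside $C$. Since $t\in Z(Q)$ has order $p$, we have $Q\le C=C_G(t)\le N_G(\langle t\rangle)$, so in particular $\Dd(G)^Q=(\Dd(G)^t)^Q$.

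First I would invoke Robinson's theorem in $G$ to deduce that $\tL_{N_G(Q)}(\Dd(G)^Q;\fk)$ has a summand with vertex $Q$. Next, Theorem \ref{noncenthm} applied with $T=\langle t\rangle$ (valid by (N1)--(N3)) provides an $N_G(T)$-equivariant homotopy equivalence $\Dd(G)^t\simeq \Dd(H)$. Taking $Q$-fixed points via \ref{equivfixed} and using $N_C(Q)\le N_{N_G(T)}(Q)$ gives an $N_C(Q)$-equivariant homotopy equivalence $\Dd(G)^Q\simeq \Dd(H)^Q$, and consequently
\[
\tL_{N_C(Q)}(\Dd(G)^Q;\fk)\;=\;\tL_{N_C(Q)}(\Dd(H)^Q;\fk)
\]
as virtual $N_C(Q)$-modules. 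The right-hand side also equals the restriction of $\tL_{N_G(Q)}(\Dd(G)^Q;\fk)$ from $N_G(Q)$ to $N_C(Q)$.

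The crucial step is transferring a vertex-$Q$ summand from $\tL_{N_G(Q)}(\Dd(G)^Q;\fk)$ to its restriction. The key observation is that $Q$ acts trivially on $\Dd(G)^Q$, since every chain in $\Dd(G)^Q$ consists of $Q$-normalized subgroups. Hence $\tL_{N_G(Q)}(\Dd(G)^Q;\fk)$ is inflated from $N_G(Q)/Q$, and under the inflation correspondence, its indecomposable summands with vertex $Q$ in $N_G(Q)$ are precisely the inflations of projective indecomposable summands of $\tL_{N_G(Q)/Q}(\Dd(G)^Q;\fk)$. Restriction from $N_G(Q)/Q$ to $N_C(Q)/Q$ preserves projectivity, and inflating back to $N_C(Q)$ produces indecomposable summands with vertex $Q$ of $\tL_{N_C(Q)}(\Dd(H)^Q;\fk)$.

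Finally, applying Robinson's theorem inside $C$ in reverse delivers a vertex-$Q$ summand of $\tL_C(\Dd(H);\fk)$. The main obstacle lies in ensuring that these vertex-$Q$ contributions are not cancelled in the Green ring by negative contributions from restrictions of other summands of the upstairs module (whose vertices strictly contain $Q$, since the upstairs module is inflated from $N_G(Q)/Q$). This delicate point must be verified via the Green correspondence, tracking the specific summand upstairs to its image downstairs; the structure of the inflation together with the fact that restriction of projectives gives projectives ensures that the contribution persists.
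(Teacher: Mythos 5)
Your proposal follows the same route as the paper's proof: apply Robinson's Theorem~\ref{fixedrob} in $G$ to get a vertex-$Q$ summand of $\tL_{N_G(Q)}(\Dd(G)^Q;\fk)$, pass to $N_C(Q)$, apply Robinson's theorem again inside $C$, and use Theorem~\ref{noncenthm} to identify $\Dd(G)^t$ with $\Dd(H)$ (you interleave the last two steps in the opposite order, which is harmless since the identification is $N_C(Q)$-equivariant on $Q$-fixed points). The one place you diverge is the middle step. The paper simply restricts the vertex-$Q$ indecomposable summand from $N_G(Q)$ to $N_C(Q)$ and cites \cite[Lem.~9.5]{alperin} (restriction of an indecomposable with vertex $Q$ to a subgroup containing $Q$ retains a summand with vertex $Q$), whereas you argue via inflation from $N_G(Q)/Q$ and projectivity. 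That is equivalent in spirit. The cancellation issue you flag at the end is a genuine subtlety: since $\tL$ is only a virtual module, restriction of the other (larger-vertex) summands could in principle contribute projective pieces over $N_C(Q)/Q$ with opposite sign, and your closing sentence appeals to the Green correspondence without actually invoking it in a setting where it applies (one would need $N_G(Q)\le N_C(Q)$, which fails here). The paper is equally terse on this point, relying on the Alperin citation which strictly speaking addresses only a single indecomposable module, not the alternating sum. So your argument matches the paper's, including its level of rigor at this one delicate juncture.
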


\begin{proof}
By Robinson's Theorem \ref{fixedrob}, $Q$ is a vertex of a summand of $\tL_{N_G(Q)}(\Dd(G)^Q; \fk)$. Consider the restriction of this Lefschetz module to the subgroup $C \cap N_G(Q)$, obtaining the Lefschetz module $\tL_{N_C(Q)}(\Dd(G)^Q; \fk)$. There exists an indecomposable summand of $\tL_{N_C(Q)}(\Dd(G)^Q; \fk)$ with vertex $Q$ \cite[Lem. 9.5]{alperin}. Since $\Dd(G)^Q = (\Dd(G)^t)^Q$, another application of Theorem \ref{fixedrob} yields a summand of $\tL_C(\Dd(G)^t; \fk)$. Theorem \ref{noncenthm} says that $\Dd(G)^t$ is equivariantly homotopy equivalent to $\Dd(H)$, so $\tL_C(\Dd(G)^t; \fk)$ and $\tL_C(\Dd(H); \fk)$ are isomorphic virtual $\fk C$-modules.
\end{proof}

Note that $t$ can be replaced in the above proof by any subgroup $T \leq Z(Q)$, where $C = TC_G(T)$. Also $C$ can be replaced by $N_G(T)$.

\begin{thm}\label{defT}
Let $G$ be a finite group of parabolic characteristic $p$. Let $T$ be a $p$-subgroup of $G$ and set $C = TC_G(T)$. Assume that the following conditions hold:
\vspace*{-.1cm}
\begin{list}{\upshape\bfseries}
{\setlength{\leftmargin}{1.2cm}
\setlength{\labelwidth}{.8cm}
\setlength{\labelsep}{0.2cm}
\setlength{\parsep}{0cm}
\setlength{\itemsep}{0cm}}
\item[$(i).$] $C = O_C.H.K$ where $O_C = O_p(C)$ and $L = O_C.H$ is the generalized Fitting subgroup of $C$;
\item[$(ii).$] The group $H = L / O_C$ is a finite simple group of Lie type in characteristic $p$;
\item[$(iii).$] There exists a triple of Sylow $p$-subgroups $(S_L,S_C,S)$ with $Z(S) \cap S_L \neq 1$.
\end{list}
Then $T$ is a vertex of an indecomposable summand of the reduced Lefschetz module $\tL_G({\mathcal D}(G); \fk)$ if and only if:
\vspace*{-.2cm}
\begin{list}{\upshape\bfseries}
{\setlength{\leftmargin}{2cm}
\setlength{\labelwidth}{.8cm}
\setlength{\labelsep}{0.2cm}
\setlength{\parsep}{0cm}
\setlength{\itemsep}{0cm}}
\item[$(a).$] $T = O_C$ and $T$ is purely noncentral,
\item[$(b).$] $K = C / L$ is a $p'$-group,
\item[$(c).$] the index of $C$ in $N_G(T)$ is relatively prime to $p$.
\end{list}
Under these conditions, there will exist a unique summand of $\tL_G(\mathcal D(G); \fk)$ with vertex $T$, which will lie in a block with defect group $T$.
\end{thm}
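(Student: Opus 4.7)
The plan is to reduce the question to a Lefschetz computation on the fixed-point complex $\Dd(G)^T$ via Robinson's theorem \ref{fixedrob}, then replace that complex by $\Dd(H)$ via Theorem \ref{noncenthm}. The key input is that $\Dd(H)$ is $H$-homotopy equivalent to the Tits building of $H$, so $\tL_H(\Dd(H);\fk) = \pm \st_H$; pushing this equivalence through outer automorphisms gives $\tL_X(\Dd(H);\fk) = \pm \widehat{\st}_X$ for $X := N_G(T)/O_C = H.(N_G(T)/L)$, where $\widehat{\st}_X$ is an indecomposable extension of the Steinberg module, projective over $\fk X$ exactly when $N_G(T)/L$ is a $p'$-group. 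Since the $N_G(T)$-action on $\Dd(H)$ factors through $X$, the module $\tL_{N_G(T)}(\Dd(H);\fk)$ is inflated from $\tL_X(\Dd(H);\fk)$, and each indecomposable summand on the $N_G(T)$-side has vertex equal to the preimage in $N_G(T)$ of the corresponding vertex over $X$, hence always contains the kernel $O_C$.

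For the sufficiency direction, assume (a), (b), (c). Conditions (i)-(iii) together with (a) supply (N1)-(N3) of Theorem \ref{noncenthm}: (N1) is the purely-noncentral half of (a); (N2) follows because $L$ is characteristic in $C \trianglelefteq N_G(T)$ and $H$ of Lie type in characteristic $p$ has local (hence parabolic) characteristic $p$; (N3) is (iii). Conditions (b) and (c) make $N_G(T)/L = (N_G(T)/C)\cdot K$ a $p'$-group, so $\widehat{\st}_X$ is projective indecomposable, lying in a defect-zero block of $\fk X$; its inflation to $\fk N_G(T)$ is indecomposable with vertex $O_C = T$ and lives in a block of $\fk N_G(T)$ with defect group $T$. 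By Robinson's theorem there is then a unique indecomposable summand of $\tL_G(\Dd(G);\fk)$ with vertex $T$; Green correspondence and Brauer's First Main Theorem place it in the block of $\fk G$ with defect group $T$, which is nonprincipal by Lemma \ref{blpar}.

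For the necessity direction, suppose $T$ is a vertex of some indecomposable summand. First, $T$ is purely noncentral: any $p$-central $s \in T$ satisfies $s \in O_p(C_G(s))$, so Lemma \ref{cenvertlef} forbids $T$ from being such a vertex. Next, $O_C$ is purely noncentral: otherwise $O_C \in \tSs(G)$ and Theorem \ref{ocintilde} makes $\Dd(G)^T$ equivariantly contractible, killing the relevant Lefschetz module and contradicting Robinson. This establishes (N1), so Theorem \ref{noncenthm} applies and yields an $N_G(T)$-equivariant equivalence $\Dd(G)^T \simeq \Dd(H)$. Now $T$ is a normal $p$-subgroup of $C$, so $T \leq O_C$; but every indecomposable summand of the inflated Lefschetz module has vertex containing $O_C$, so vertex $T$ forces $T = O_C$, giving (a). A summand with vertex exactly $O_C$ corresponds to a projective indecomposable summand of $\pm \widehat{\st}_X$ over $\fk X$; since the restriction of $\widehat{\st}_X$ to $H$ is the simple projective $\st_H$, any vertex of $\widehat{\st}_X$ in $X$ has trivial intersection with $H$, and projectivity of $\widehat{\st}_X$ is equivalent to $N_G(T)/L$ being a $p'$-group, which unpacks into (b) and (c).

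The main obstacle is the identification $\tL_X(\Dd(H);\fk) = \pm \widehat{\st}_X$ and the accompanying projectivity criterion: this requires running the equivalence between $\Dd(H)$ and the Tits building (Proposition \ref{propdist}(iii)-(iv)) equivariantly under the outer-automorphism action induced by $N_G(T)/L$, in the spirit of the remark at the end of the proof of Theorem \ref{noncenthm}, and invoking the standard fact that the extended Steinberg module of $H.(N_G(T)/L)$ is projective iff $N_G(T)/L$ has order prime to $p$. Once this is in hand, uniqueness of the summand, its block assignment, and the Green correspondence statement all follow from the standard modular representation theory packaged in Section 4.
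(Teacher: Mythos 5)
Your proposal follows the same overall architecture as the paper's proof: reduce via Robinson's Theorem \ref{fixedrob}, apply Theorem \ref{noncenthm} to replace $\Dd(G)^T$ by the building for $H$, recognize the resulting Lefschetz module as (up to sign) the inflated extended Steinberg module $M$, and then analyze vertices and blocks. The difference in routing is that you work throughout with the quotient $X = N_G(T)/O_C$, whereas the paper works with the normal subgroup $L = O_C.H$ of $N_G(T)$. For the vertex computation both routes succeed: your appeal to the fact that the vertex of an inflated irreducible module is the preimage of the vertex is correct (it follows from the Nagao--Tsushima result that the normal $p$-subgroup lies in the vertex together with the compatibility of induction and inflation, and then a Mackey-type descent), though you should cite or prove it; the paper instead obtains $Q = O_C.S'$ directly by combining \cite[Thm.~4.7.8]{nt}, \cite[Lem.~9.8]{alperin}, and the block structure of $\fk L$.

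The genuine gap is the assertion that $M$ ``lives in a block of $\fk N_G(T)$ with defect group $T$.'' Passing through $X$, you only learn that $M$ lies in the block $B$ of $\fk N_G(T)$ dominating the defect-zero block of $\fk X$ containing $\widehat{\st}_X$. But block domination from a $p$-group quotient, \cite[Thm.~9.9(b)]{n98}, only tells you that $B$ dominates \emph{some} block of $\fk X$ with defect group $D/O_C$ (where $D$ is the defect group of $B$); it does not say that the particular block you started with has defect group $D/O_C$, unless you know $B$ dominates a unique block of $\fk X$. That uniqueness would require $N_G(T)/C_{N_G(T)}(O_C)$ to be a $p$-group, which need not hold here since $K' = N_G(T)/L$ can act nontrivially on $O_C$ and is a $p'$-group. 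The paper closes this gap differently: because the component $E$ of $C$ centralizes $O_C$, the quotient $L/C_L(O_C)$ \emph{is} a $p$-group, so \cite[Thm.~9.10]{n98} gives a bijection between blocks of $\fk L$ and $\fk H$. Hence $M|_L$ lies in a block $B_1$ of $\fk L$ with defect group exactly $O_C$, and block covering \cite[Thm.~15.1(2)]{alperin} forces the defect group $D$ of $B$ to satisfy $D \cap L = O_C$; since $D/(D\cap L)$ embeds in the $p'$-group $K'$, one gets $D = O_C = T$. You need this restriction-to-$L$ argument (or an equivalent) to justify the defect-group conclusion; the rest of your outline is sound.
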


\begin{proof}
Theorem \ref{fixedrob} tells us that $T$ is a vertex of a summand of $\tL_G({\mathcal D}(G); \fk)$ if and only if $T$ is a vertex of a summand of $\tL_{N_G(T)}({\mathcal D}(G)^T;\fk)$. By Lemma \ref{cenvertlef}, if $O_C$ contains central elements $T$ is not a vertex; thus $O_C$ is purely noncentral.

The generalized Fitting subgroup $L$ is characteristic in $C$ and so is normal in $N_G(T)$. Denote the quotient group by $K' = N_G(T) / L$. Note that $K'$ will be a $p'$-group if and only if $K$ is a $p'$-group and the index of $C$ in $N_G(T)$ is relatively prime to $p$. We have $N_G(T) = O_C.H.K'$. The group $C$ has one component $E$ with center $Z(E)$ a p-group and $E / Z(E) = H$. Elements of $O_C$ commute with elements of $E$.

Theorem \ref{noncenthm} tells us that the fixed point set ${\mathcal D}(G)^T$ is $N_G(T)$-homotopy equivalent to the Tits building $\Delta$ for the Lie group $H$. Thus their reduced Lefschetz modules are isomorphic virtual $\fk N_G(T)$-modules. Denote by $M = \tL_{N_G(T)}(\Delta ; \fk)$. The subgroup $O_C$ acts trivially on $M$ (by definition of the action of $N_G(T)$ on $\Delta$) and so $M$ is the inflation of a module $\widetilde{M}$ over the group $\widetilde{H}' = N_G(T) / O_C = H.K'$. The restriction of $\widetilde{M}$ to the Lie group $H$ is up to a sign the Steinberg module ${\rm St}_H$, which is irreducible. This implies that $M$ and $\widetilde{M}$ are also irreducible modules. The restriction $M'$ of $M$ to $L = O_C.H$ is also irreducible, being the inflation of $\pm {\rm St}_H$. The module $\widetilde{M}$ is referred to as an extended Steinberg module; see \cite{schmid92} and \cite{musch95}.

Next, $\fk H$ has only two blocks, the principal block $b_0$ and a defect zero block $b_1$ containing the Steinberg module. Then $\fk L$ has two blocks also, the principal block $B_0$ and a block $B_1$ which contains the module $M'$, using that $E$ centralizes $O_C$ and \cite[Thm. 9.10]{n98}. The block $B_1$ dominates (or contains) the block $b_1$. The normal $p$-subgroup $O_C$ is contained in the vertex $Q_1$ of the irreducible module $M'$ \cite[Thm. 4.7.8]{nt}. The defect group $D_1$ of the block $B_1$ contains $O_C$ (and also $Q_1$), and the quotient $D_1 / O_C$ equals $1$, the defect group of the block $b_1$ \cite[Thm. 9.9(b)]{n98}. Therefore $Q_1 = D_1 = O_C$.

Let $B$ be the block of $\fk N_G(T)$ which contains the module $M$. Note that $B$ covers the block $B_1$. The defect group $D$ of $B$ satisfies $D \cap L = D_1 = O_C$ by \cite[Thm. 15.1(2)]{alperin}. If $Q$ is the vertex of $M$, then $O_C \leq Q$ and $O_C \leq Q \cap L \leq D \cap L = O_C$. Also the quotient $S' = Q / (Q \cap L) = Q / O_C$ is a Sylow $p$-subgroup of $K' = N_G(T) / L$ by \cite[Lem. 9.8]{alperin}. Since $Q = O_C.S'$ is a subgroup of the $p$-group $D$, the quotient map from $N_G(T) \rightarrow K'$ applied to $D$ has kernel $O_C$ and image $S'$. Therefore $Q = D = O_C.S'$.

Since $T \leq O_C$, we have $T = Q$ if and only if $T = O_C$ and $S' = 1$. In this situation, $K'$ is a $p'$-group. Under these conditions, there will be a unique summand of the reduced Lefschetz module $\tL_G(\mathcal D(G); \fk)$ with vertex $T$, and this summand will lie in a block with defect group $T$ because of the relationship between the Brauer correspondence and the Green correspondence.
\end{proof}

\begin{num}\label{mhusch}
Work of M\"{u}hlherr and Schmid \cite[Thm. C and Lem. 7]{musch95} concerning the action of an automorphism of $H$ on the associated building $\Delta$ can be applied to $H \trianglelefteq \widetilde{H}'$. If the value of the rational character $\chi$ of the extended Steinberg module $\widetilde{M}$ evaluated on an element $g \in \widetilde{H}'$ is nonzero, then the fixed point set $\Delta^g$ is a Moufang complex. Also, $\chi(g) \ne 0$ if and only if the $p$-part of $g$ is conjugate to an element of the defect group $S'$ of the block containing $\widetilde{M}$. If $g \in S'$, then $\Delta^g$ is a building for the group $O^{p'}(C_H(g))$. Since $O_C$ acts trivially on $\Delta$, we can phrase this result in terms of the character of the inflated extended Steinberg module $M$, with fixed point set $\Delta^g$, a building when $g \in D = O_C.S'$.

It follows that $\Dd(G)^D$ is homotopy equivalent to the building $\Delta^D=$$\Delta^{S'}$ for $O_C.O^{p'}(C_H(S'))$. As noted in the proof of Lemma \ref{defcp} the extension $H:S'$ is split and $S'$ is a subgroup of $\tH'$. This homotopy is equivariant with respect to $N_G(T) \cap N_G(D)$, which contains the group $DC_G(D)$. To obtain a homotopy equivariant with respect to $N_G(D)$, we require more information about $DC_G(D)$ so that we can apply Theorem \ref{noncenthm} to $DC_G(D)$.
\end{num}

\begin{prop}\label{normalH}
Assume that $H$ is a normal subgroup of $N_G(T)$ with complement a group $D$ containing $O_C = O_p(TC_G(T))$. Denote $S' = D / O_C$, and assume that $D = O_C:S'$ is a split extension. Then $DC_G(D) = DC_H(S')$.
\end{prop}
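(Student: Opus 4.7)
The plan is to prove $DC_G(D) = DC_H(S')$ by the two inclusions, exploiting the semidirect decomposition $N_G(T) = HD$ with $H \cap D = 1$. A key preliminary is that $H$ and $O_C$ centralize each other elementwise. Since $O_C$ is characteristic in $C = TC_G(T) \trianglelefteq N_G(T)$, it is normal in $N_G(T)$, and $H \cap O_C \le H \cap D = 1$ because $D$ complements $H$. Standard commutator bookkeeping (both $H$ and $O_C$ are normal in $N_G(T)$) then puts $[H, O_C]$ in $H \cap O_C = 1$.

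With this in hand, the inclusion $DC_H(S') \subseteq DC_G(D)$ is immediate: every $h \in C_H(S')$ centralizes both $S'$ (by definition) and $O_C$ (by the preliminary), hence centralizes $D = O_C \cdot S'$, giving $C_H(S') \subseteq C_G(D)$.

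For the reverse inclusion, take $g \in C_G(D)$. Since $T \le O_C \le D$, we have $g \in C_G(T) \subseteq N_G(T)$, so $g$ has a unique decomposition $g = hd$ with $h \in H$, $d \in D$. The core step is to show $h \in C_H(S')$. Fix $s \in S'$. From $gs = sg$ one reads off $h^{-1}sh = dsd^{-1} \in D$, while the normality of $H$ in $N_G(T)$ gives $[s, h] = s^{-1}h^{-1}sh \in H$. Since $[s, h] = s^{-1}(h^{-1}sh) \in s^{-1}D = D$, it follows that $[s, h] \in H \cap D = 1$, so $h$ commutes with $s$. Thus $h \in C_H(S')$, and $g = hd \in C_H(S') \cdot D = DC_H(S')$, where the last equality uses that $C_H(S')$ and $D$ commute elementwise by the forward inclusion.

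The main (mild) obstacle is the step forcing $[s, h] = 1$: one must recognize that the single conjugate $h^{-1}sh$ admits two independent descriptions, one placing it in $D$ via the centralizing equation, the other placing it in the coset $sH$ via normality of $H$, so that the complement condition $H \cap D = 1$ collapses the discrepancy to the identity. Everything else is routine semidirect-product bookkeeping.
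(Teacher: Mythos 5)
Your argument is correct and follows essentially the same route as the paper's: decompose $g \in C_G(D) \leq N_G(T)$ via the semidirect product $N_G(T) = H\,D$ with $H \cap D = 1$, then use normality of $H$ together with the trivial intersection to force the $H$-component to centralize $S'$ (your commutator bookkeeping $[s,h] \in H \cap D = 1$ is the same mechanism the paper packages as uniqueness of the decomposition of $zgz^{-1}$). The one small improvement is that you explicitly derive $[H, O_C] = 1$ from $O_C \trianglelefteq N_G(T)$ (via $O_C$ characteristic in $C \trianglelefteq N_G(T)$) and $H \cap O_C \leq H \cap D = 1$, whereas the paper merely asserts this fact, implicitly relying on the component structure present in the intended applications.
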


\begin{proof}
Note that $N_G(T) = H \cdot D = (O_C \times H):S'$. Since $T \leq O_C \leq D$, we have $C_G(D) \leq C_G(T) \leq N_G(T)$, so that $C_G(D) = C_N(D)$, denoting $N = N_G(T)$. An element of $C_N(D)$ can be written uniquely as a product $xy$ with $x \in H$ and $y \in D$, such that for all $z \in D$, $xy = z(xy)z^{-1} = (zxz^{-1})(zyz^{-1})$. Since $zxz^{-1} \in H$ and $zyz^{-1} \in D$, we must have $zxz^{-1} = x$ and so $x \in C_H(D)$. Since elements of $H$ commute with elements of $O_C$, $C_H(D) = C_H(S')$. It now follows that $DC_G(D) = DC_H(D) = DC_H(S')$.
\end{proof}

\begin{num}\label{remD}
In the above situation, we can write $DC_G(D)$ in the form $D.H_D.K_D$, with $H_D = O^{p'}(C_H(S'))$ and $K_D = C_H(S') / H_D$. Usually, when $H$ is a finite simple group of Lie type, the extension $H:S'$ is split and $H_D$ will also be a finite simple group of Lie type. Then $H_D$ will be a component of $DC_G(D)$ and will be normal in $N_G(D)$. As noted in \ref{mhusch}, under the conditions of Theorem \ref{defT}, $\Dd(G)^D$ will be a building and thus not contractible, so $D$ must be purely noncentral. Finally, $K_D$ is a $p'$-group, and the conditions for Theorem \ref{noncenthm} will be satisfied.
\end{num}

\begin{prop}\label{simpleH}
Assume $N = N_G(T) = O_C.(H:S')$ and assume $C_H(S')$ is a simple group. Also assume that $D = O_C.S'$ is purely noncentral in $G$. Then $DC_G(D) = D.C_H(S')$.
\end{prop}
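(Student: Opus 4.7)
The plan is to mimic the proof of Proposition \ref{normalH}, but without assuming that the extension $D = O_C.S'$ splits. The conclusion we want is stronger in form: not merely the product $DC_G(D) = DC_H(S')$, but the extension structure $D.C_H(S')$. I will exhibit this extension structure via a homomorphism $DC_G(D) \twoheadrightarrow C_H(S')$ with kernel $D$.

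First, I need to pin down the ambient structure. Using Lemma \ref{comp} as in the setup of Theorem \ref{defT}, $C$ has a unique component $E$ with $E/Z(E) = H$, $Z(E) \le O_C$, and $[E,O_C] = 1$. Uniqueness of $E$ together with $C \trianglelefteq N$ forces $E \trianglelefteq N$, so $L := O_C E \trianglelefteq N$ with $L/O_C = H$. By hypothesis $D$ is the preimage of $S'$ under the projection $\pi : N \twoheadrightarrow N/O_C = H{:}S'$, giving $L \cap D = O_C$ and $LD = N$. Since $T \le O_C \le D$, we have $C_G(D) \le C_G(T) \le N$, so $C_G(D) = C_N(D)$.

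Next comes the key computation. Any $y \in C_N(D)$ centralizes $D$, and since $\pi(D) = S'$, its image $\pi(y)$ lies in $C_{H{:}S'}(S')$. A direct calculation in the split product $H{:}S'$ (using $S' \cap H = 1$) yields
\[
C_{H{:}S'}(S') \;=\; C_H(S') \times Z(S').
\]
Composing $\pi$ with the quotient $H{:}S' \twoheadrightarrow H$ (killing $S'$), I obtain a homomorphism $\rho : N \to H$ whose kernel is precisely $D$. Restricted to $DC_G(D)$, the kernel is still $D$, and the image lies in $C_H(S')$, giving an injection $DC_G(D)/D \hookrightarrow C_H(S')$.

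The last step is to show this injection is surjective; this is the main obstacle, and it is where the simplicity of $C_H(S')$ is used. Let $\tilde F \le E$ be the preimage under $E \twoheadrightarrow E/Z(E) = H$ of $C_H(S')$; then $\tilde F$ is a central extension of $C_H(S')$ by a subgroup of $Z(E) \le O_C$. Because $C_H(S')$ is simple (hence perfect), $F := [\tilde F, \tilde F]$ maps onto $C_H(S')$ under $\rho$. Moreover $F \le E$, so $F$ commutes with $O_C$, and the action of $S'$ on $F/(F \cap Z(E)) = C_H(S')$ is trivial by construction, forcing the $S'$-action on $F$ itself to be trivial (again using simplicity/perfection to rule out non-trivial cocycle obstructions into $Z(E)$). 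Hence $F \le C_N(D) = C_G(D)$, and $\rho(F) = C_H(S')$, which establishes surjectivity. Therefore $D \trianglelefteq DC_G(D)$ with $DC_G(D)/D \cong C_H(S')$, i.e.\ $DC_G(D) = D.C_H(S')$ in the paper's extension notation.
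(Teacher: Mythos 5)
Your proof is correct in substance but takes a genuinely different route for the key surjectivity step, and it contains one technical slip that should be repaired. The slip: there is no group homomorphism $H{:}S' \twoheadrightarrow H$ ``killing $S'$,'' because $S'$ need not be normal in $H{:}S'$; the set-theoretic retraction $hs \mapsto h$ is not multiplicative. What \emph{is} true, and what the paper uses, is that $S'$ is normal in $N_{H:S'}(S') = S'N_H(S')$, which contains $\pi(N_N(D))$ and hence $\pi(DC_G(D))$, so the composite is well-defined on $N_N(D)$ with kernel $D$ there --- and that is all you need, since $DC_G(D) \le N_N(D)$.

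With that fixed, the comparison is as follows. The paper observes that $DC_G(D) \trianglelefteq N_N(D)$, so its image under the composite is a \emph{normal} subgroup of the simple group $C_H(S')$, hence either trivial or all of $C_H(S')$; the ``purely noncentral'' hypothesis on $D$ rules out the trivial case, since $Z(S) \le C_G(D)$ for any $S \in \syl_p(G)$ containing $D$. Your argument is instead constructive: taking the preimage $\widetilde{F} \le E$ of $C_H(S')$ and passing to the perfect group $F = [\widetilde{F},\widetilde{F}]$, you show $D$ centralizes $F$ because the conjugation action of $D$ on $F$ factors through $S'$, is trivial on $F/Z(F) \cong C_H(S')$, and an automorphism of a perfect group that is trivial modulo the center is the identity (the discrepancy would be a homomorphism $F \to Z(F)$, which vanishes since $F$ is perfect). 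Hence $F \le C_G(D)$ maps onto $C_H(S')$, giving surjectivity directly. Both proofs use simplicity, but differently: the paper via normality plus the classification of normal subgroups, you via perfectness and rigidity of central extensions of quasisimple groups. Your route is longer and leans on more of the ambient structure (uniqueness and quasisimplicity of the component $E$, $[E,O_C]=1$, and that $C_H(S')$ is \emph{nonabelian} simple, hence perfect), but in exchange it exhibits an explicit complement inside $C_G(D)$ and shows that the ``purely noncentral'' hypothesis is logically dispensable once that component structure is in hand.
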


\begin{proof}
Note that $DC_G(D) = DC_N(D) \trianglelefteq N_N(D)$, and under the quotient map $N \rightarrow H:S'$, the image of $N_N(D)$ equals $N_{H:S'}(S') = S'N_H(S')$. Next, take the quotient modulo $S'$ to obtain $N_H(S')$. The image of $DC_G(D)$ under the composition of these two quotient maps is a normal subgroup of the simple group $C_H(S')$. It is impossible for $C_G(D) \leq D$ to be purely noncentral, since if $D \leq S \in \syl_p(G)$ then $Z(S) \leq C_G(D)$. So we must have $DC_G(D) = D.C_H(S')$.
\end{proof}

\begin{prop}\label{O_C}
Let $G$ be a finite group of parabolic characteristic $p$. Let $T$ be a $p$-subgroup of $G$ and set $C = TC_G(T)$. Assume that:
\begin{list}{\upshape\bfseries}
{\setlength{\leftmargin}{1.2cm}
\setlength{\labelwidth}{.8cm}
\setlength{\labelsep}{0.2cm}
\setlength{\parsep}{0.1cm}
\setlength{\itemsep}{0cm}}
\item[$(i).$] Conditions (N1), (N2) and (N3) hold for $C$. In particular $C = O_C.H.K$ with $O_C:=O_p(C)$.
\item[$(ii).$] The subgroup $L =O_C.H$ is the generalized Fitting subgroup of $C$.
\end{list}
Then $O_C \cdot C_G(O_C)$ is of the form $O_C.H.\overline{K}$ for some subgroup $\overline{K} \leq K$, and conditions (N1), (N2) and (N3) are satisfied for $O_C \cdot C_G(O_C)$.
\end{prop}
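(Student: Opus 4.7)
The plan is to first establish the claimed factorization of $C' := O_C \cdot C_G(O_C)$, then verify (N1), (N2) and (N3) with $O_C$ taking the role of $T$. The structural starting point is the containment $C' \leq C$: since $T \leq O_C$, one has $C_G(O_C) \leq C_G(T) \leq C$. Because $C_G(O_C) = C_C(O_C)$ is the centralizer in $C$ of a normal subgroup of $C$, it is normal in $C$, and consequently $C' \trianglelefteq C$. This normality will be the backbone of the whole argument.

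To obtain $C' = O_C.H.\overline K$, I would invoke the fact that a component of $C$ centralizes $O_p(C)$, as cited from \cite[31.6(2)]{a00} in the proof of Lemma \ref{comp}. Hypothesis (ii) says $L = O_C.H$ is the generalized Fitting subgroup of $C$, so there is a unique component $E$ with $E/Z(E) = H$ and $Z(E) \leq O_C$. Then $E \leq C_G(O_C) \leq C'$, whence $L \leq C'$, and reducing modulo $O_C$ gives $C'/O_C \supseteq H$. Taking $\overline K$ to be the image of $C'/O_C$ inside $(C/O_C)/H = K$ yields the desired factorization.

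For (N1), the task reduces to showing $O_{C'} = O_C$. The inclusion $O_C \leq O_{C'}$ is immediate from $O_C \trianglelefteq C'$. For the reverse, $O_{C'}$ is characteristic in $C'$ and $C' \trianglelefteq C$, so $O_{C'} \trianglelefteq C$; being a normal $p$-subgroup of $C$, this forces $O_{C'} \leq O_p(C) = O_C$. Since $O_{C'} = O_C$ is purely noncentral in $G$ by (N1) for $T$, condition (N1) holds for $O_C$. For (N2), the group $H$ has parabolic characteristic $p$ by hypothesis, and only the normality of $L$ in $N_G(O_C)$ remains to be verified. Here I would show that $E$ is the unique component of $C'$: any such component is quasisimple and subnormal in $C'$, hence subnormal in $C$ since $C' \trianglelefteq C$, so must coincide with $E$; conversely $E \trianglelefteq L \trianglelefteq C'$ shows $E$ is subnormal in $C'$. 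Thus $E$, and therefore $L = O_C \cdot E$, is characteristic in $C'$. Since $N_G(O_C)$ normalizes $C_G(O_C)$ and hence $C'$, it normalizes $L$.

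Finally, (N3) for $O_C$ can be inherited from (N3) for $T$ by keeping $S_L$ and $S$ as given and replacing $S_C$ by the Sylow $p$-subgroup $S_{C'} := S_C \cap C'$ of the normal subgroup $C'$ of $C$. One checks that $S_L \leq L \leq C'$ together with $S_L \leq S_C$ yields $S_L \leq S_{C'}$, and the condition $Z(S) \cap S_L \neq 1$ is unchanged. I expect the main obstacle, and the step deserving the most care, to be the identification $O_{C'} = O_C$ combined with the component-theoretic argument that $E$ is the unique component of $C'$ and hence $L \trianglelefteq N_G(O_C)$; once these two points are in place, everything else is routine bookkeeping.
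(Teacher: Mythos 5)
Your proof is correct and follows essentially the same approach as the paper: the key structural fact in both is that the unique component $E$ of $C$ (with $E/Z(E) = H$ and $Z(E) \leq O_C$) centralizes $O_C$, hence $L \leq C' := O_C \cdot C_G(O_C)$, and normality of $L$ in $N_G(O_C)$ then drops out. The only cosmetic differences are that the paper computes the layer $E(N_G(O_C))$ directly rather than routing through the uniqueness of the component of the normal subgroup $C'$, and that you spell out the verification $O_p(C') = O_C$ for (N1), which the paper leaves implicit.
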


\begin{proof}
The group $C$ has one component $E$ with center $Z(E)$ a $p$-group and $E/Z(E) = H$. We have $E \leq C_G(O_C)$, and thus $L = O_C.H \leq O_C \cdot C_G(O_C)$. In order to prove that (N2) holds for $O_C \cdot C_G(O_C)$, we have to show that $L$ is a normal subgroup of $N: = N_G(O_C)$. Observe that it suffices to show that $E$ is the only component of $N$, since in this case $L$ is the product of two normal subgroups of $N$, namely $O_C$ and $E$. Let $E(N)$ denote the layer of $N$, the central product of all the components of $N$. Since $E(N)$ centralizes the Fitting subgroup of $N$, which contains $O_C$, it follows that $E(N) \le O_C \cdot C_G(O_C) \le C$.  Thus $E(N) = E$. Define $\overline{K}$ to be the image of $O_C \cdot C_G(O_C)$ under the quotient map $C \rightarrow C/L = K$. Thus conditions (N1) and (N2) are satisfied for $O_C \cdot C_G(O_C)=O_C.H.\overline{K}$. Condition (N3) also holds since any Sylow $p$-subgroup $S_{O_C}$ of $O_C \cdot C_G(O_C)$ can be extended to a Sylow $p$-subgroup $S_C$ of $TC_G(T)$, and we have $S_L = S_C \cap L$ is a Sylow $p$-subgroup of $L$. Then using \ref{n3all} there exists a Sylow $p$-subgroup $S$ of $G$ extending $S_C$ such that $Z(S) \cap S_L \neq 1$.
\end{proof}

The reduced Lefschetz module $\tL_G(\Cc (G); \fk)$ was studied by Sawabe \cite{sa06}, in the more general context when $G$ is any finite group and $\Cc (G)$ is a collection of subgroups of $G$ which is closed under $p$-overgroups. Sawabe applied his results to the collection $\Cc (G)$ of $p$-centric subgroups of $G$, which is equivariantly homotopy equivalent to $\Dd (G)$. The collection $\tSs(G)$ also satisfies this condition by \ref{tildeclosed}(i), and for $G$ of parabolic characteristic $p$ is equivariantly homotopy equivalent to $\Dd(G)$ by Theorem \ref{propdist}(iii-iv). We give below two of Sawabe's results.

\begin{prop}[Sawabe]\label{swbe}
Assume that $G$ is a finite group and let $\Cc (G)$ denote a collection of $p$-subgroups of $G$ that is closed under taking $p$-overgroups. Take $V$ in $\Bb(G) \setminus \Cc(G)$ of maximal order. Then the following hold:
\begin{list}{\upshape\bfseries}
{\setlength{\leftmargin}{.8cm}
\setlength{\labelwidth}{1cm}
\setlength{\labelsep}{0.2cm}
\setlength{\parsep}{0cm}
\setlength{\itemsep}{0cm}}
\item[$(i).$]\cite[Prop. 4]{sa06} For any $p$-subgroup $Q$ of order greater than that of $V$, the fixed point set $\Cc(G)^Q$ is $N_G(Q)$-contractible. In particular $Q$ is not a vertex of $\tL_G(\Cc(G); \fk)$.
\item[$(ii).$]\cite{sawp} The group $V$ is a vertex of an indecomposable summand of $\tL_G(\Cc(G); \fk)$ if and only if $\tL _{N_G(V)} (\Cc(G)^V; \fk) \ne 0$.
\end{list}
\end{prop}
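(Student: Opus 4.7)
The plan is to prove (i) by showing that $\Cc(G)^Q$ is $N_G(Q)$-contractible whenever $|Q|>|V|$, and then to derive (ii) from (i) together with Robinson's Theorem \ref{fixedrob} and the triviality of the $V$-action on $\tL_{N_G(V)}(\Cc(G)^V;\fk)$.

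For (i), the decisive input is the maximality of $V$ in $\Bb(G)\setminus\Cc(G)$: the radical closure $R_Q$ lies in $\Bb(G)$ with $|R_Q|\geq|Q|>|V|$, so by maximality $R_Q\in\Cc(G)$; moreover $R_Q$ is $N_G(Q)$-fixed because $N_G(Q)\leq N_G(R_Q)$ from the characteristic construction of the radical closure. I would construct an $N_G(Q)$-equivariant zig-zag of poset maps within $\Cc(G)^Q$ connecting the identity to the constant map at $R_Q$, along the lines of
$$P \;\leq\; PQ \;\geq\; N_{PQ}(Q) \;\leq\; N_{PQ}(Q)\cdot R_Q \;\geq\; R_Q,$$
checking at each step that the intermediate $p$-subgroup lies in $\Cc(G)$ via closure of $\Cc(G)$ under $p$-overgroups together with the fact that each term contains either $P\in\Cc(G)$ or $R_Q\in\Cc(G)$ (note $N_{PQ}(Q)R_Q$ is a group because $N_{PQ}(Q)\leq N_G(Q)\leq N_G(R_Q)$). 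The case $Q\notin\Cc(G)$ requires a small adjustment for $N_{PQ}(Q)$: one observes that then $Q\notin\Bb(G)$ either (else the maximality of $V$ puts $Q$ in $\Cc(G)$), so $R_Q>Q$ strictly, and the middle steps can be modified to include $R_Q$ throughout. The ``in particular'' statement then follows immediately from \ref{fixedrob}, since $\tL_{N_G(Q)}(\Cc(G)^Q;\fk)=0$ for a contractible complex.

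For (ii), the forward direction is immediate from \ref{fixedrob}. For the converse, assume $\tL_{N_G(V)}(\Cc(G)^V;\fk)\neq 0$; I show that every indecomposable summand has vertex exactly $V$, which by \ref{fixedrob} produces a summand of $\tL_G(\Cc(G);\fk)$ with vertex $V$. The upper bound ``vertex $W\leq V$'' follows from Th\'evenaz's relative projectivity theorem \cite[Thm.~2.1]{th87}: vertices are confined to $p$-subgroups $W\leq N_G(V)$ with $(\Cc(G)^V)^W=\Cc(G)^{VW}$ non-acyclic (the equality uses that $W$ normalizes $V$, so $VW$ is a $p$-subgroup of $G$), and part (i) applied to $VW$ forces $W\leq V$. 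The lower bound ``$V\leq W$'' uses that $V\leq (N_G(V))_\sigma$ for every simplex $\sigma$ of $\Cc(G)^V$, so $V$ acts trivially on each induced permutation module $\ind_{(N_G(V))_\sigma}^{N_G(V)}\fk$ appearing in the Lefschetz sum, hence on the whole virtual module; a Mackey/vertex argument (the trivial $\fk V$-module has vertex $V$ and is not a summand of any module induced from a proper subgroup of $V$) then places $V$ inside every vertex. The main obstacle is making the contracting homotopy in (i) fully rigorous, particularly keeping every intermediate term of the zig-zag in $\Cc(G)$ when $Q\notin\Cc(G)$; once that step is dispatched, the rest is a formal synthesis of Robinson's theorem with the Th\'evenaz relative-projectivity framework.
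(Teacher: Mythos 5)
The paper does not prove this proposition; it is quoted from Sawabe \cite{sa06,sawp}, so there is no in-paper argument to compare against, and I assess yours on its own merits. Part (ii) is correct: the upper bound on vertices follows from Th\'evenaz relative projectivity together with $(\Cc(G)^V)^W=\Cc(G)^{VW}$ and part (i), and the lower bound from the fact that $V$ stabilizes every simplex of $\Cc(G)^V$, hence acts trivially on every chain group and therefore on every indecomposable summand, and a normal $p$-subgroup acting trivially on an indecomposable module lies in its vertex. Thus all summands of $\tL_{N_G(V)}(\Cc(G)^V;\fk)$ have vertex exactly $V$, and Theorem~\ref{fixedrob} converts nonvanishing of this local module into existence of a global summand with vertex $V$, and conversely.

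Part (i), however, has a genuine gap at the very point you flag. When $Q\notin\Cc(G)$, the term $N_{PQ}(Q)$ in your zig-zag contains $Q$ but in general contains neither $P$ nor $R_Q$; since $\Cc(G)$ is only closed upward, there is no reason for $N_{PQ}(Q)$ to lie in $\Cc(G)$, and your proposed patch does not repair this: $N_{PQ}(Q)R_Q$ does lie in $\Cc(G)$ but need not be contained in $PQ$ (there is no reason for $R_Q\leq PQ$), so the chain of comparabilities breaks. A correct argument can be assembled from tools already recorded in Section~2 of the paper: by Theorem~\ref{homotopies}(iii), $\Cc(G)^Q\simeq\Cc(G)_{\geq Q}$; if $Q\in\Cc(G)$ this is a cone on $Q$, and otherwise $Q\notin\Bb(G)$ (else maximality of $V$ would put $Q\in\Cc(G)$), so $\Cc(G)_{\geq Q}=\Cc(G)_{>Q}$. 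Two applications of Proposition~\ref{grosubcol} then give $\Cc(G)_{>Q}\simeq(\Bb(G)\cap\Cc(G))_{>Q}$ and $\Ss(G)_{>Q}\simeq\Bb(G)_{>Q}$, and these two middle posets coincide because any $p$-radical subgroup strictly above $Q$ has order exceeding $|V|$ and hence lies in $\Cc(G)$ by maximality of $V$. Finally $\Ss(G)_{>Q}$ is conically contractible via $P\geq N_P(Q)\leq N_P(Q)\cdot O_p(N_G(Q))\geq O_p(N_G(Q))$, which stays in $\Ss(G)_{>Q}$ precisely because $O_p(N_G(Q))>Q$ forces $N_P(Q)>Q$ by the normalizer condition in $p$-groups. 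It is exactly this Grodal-style reduction to $\Ss(G)_{>Q}$, where no membership-in-$\Cc(G)$ condition has to be checked on the intermediate terms, that dissolves the difficulty your direct zig-zag runs into.
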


We end this Section with a result which asserts that under our working assumptions the defect groups of the nonprincipal blocks of $\fk G$ are vertices for the nonprojective indecomposable summands of the reduced Lefschetz module associated to the collection of $p$-centric and $p$-radical subgroups of $G$.

\begin{thm}\label{noncentricthm}
Let $G$ be a finite group of parabolic characteristic $p$. Let $T$ be a $p$-subgroup of $G$ and set $C = TC_G(T)$. Assume the following hold:
\begin{list}{\upshape\bfseries}
{\setlength{\leftmargin}{1.2cm}
\setlength{\labelwidth}{.8cm}
\setlength{\labelsep}{0.2cm}
\setlength{\parsep}{0.1cm}
\setlength{\itemsep}{0cm}}
\item[$(i).$] Conditions (N1), (N2) and (N3) hold for $C$. Thus $C = O_C.H.K$ and $N_G(T) = O_C.H.K'$. \item[$(ii).$] $H$ is a finite group of Lie type in characteristic $p$.
\item[$(ii).$] The defect group $D$ of the block $B$ for the group $N_G(T)$ which contains the inflated extended Steinberg module of $H$ is a noncentric $p$-radical subgroup of $G$ of maximal order.
\end{list}
Then $D$ is a vertex of an indecomposable summand of the reduced Lefschetz module $\tL_G(\Dd(G); \fk)$, and if this summand lies in a nonprincipal block, the defect group of this block equals $D$.
\end{thm}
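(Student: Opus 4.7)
The plan is to invoke Sawabe's criterion, Proposition \ref{swbe}(ii), in combination with the description of $\Dd(G)^D$ provided in \ref{mhusch}. Take $\Cc(G)$ to be the collection of $p$-centric subgroups of $G$: this collection is closed under passage to $p$-overgroups (since $p$-centricity is inherited by $p$-overgroups), and for $G$ of parabolic characteristic $p$ it is equivariantly homotopy equivalent to $\Dd(G)$. Consequently $\tL_G(\Cc(G);\fk) \cong \tL_G(\Dd(G);\fk)$ as virtual $\fk G$-modules, and $\Cc(G)^D$ and $\Dd(G)^D$ are $N_G(D)$-homotopy equivalent. The set $\Bb(G)\setminus\Cc(G)$ is exactly the collection of noncentric $p$-radical subgroups of $G$, so by hypothesis $(iii)$, $D$ is an element of $\Bb(G)\setminus\Cc(G)$ of maximal order, which is precisely the setup required by Sawabe's result.

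First I would verify that $\tL_{N_G(D)}(\Dd(G)^D;\fk)$ is nonzero as a virtual $\fk N_G(D)$-module. The description in \ref{mhusch} (which applies under the assumptions of Theorem \ref{defT}, and therefore under the current hypotheses $(i)$ and $(ii)$) identifies $\Dd(G)^D$ up to homotopy with the Tits building $\Delta^{S'}$ of the Lie-type group $O_C\cdot O^{p'}(C_H(S'))$. The reduced Euler characteristic of such a spherical building equals, up to sign, the dimension of its Steinberg module, which is strictly positive. This forces the virtual character of $\tL_{N_G(D)}(\Dd(G)^D;\fk)$ to be nonzero at the identity element, hence the virtual module is nonzero in the Green ring. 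Applying Proposition \ref{swbe}(ii) with $V=D$ then yields an indecomposable summand of $\tL_G(\Dd(G);\fk)$ having vertex $D$.

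For the block statement, let $M$ be such an indecomposable summand and suppose $M$ lies in a nonprincipal block $B'$ with defect group $D'$. A standard vertex-defect inclusion gives that (a conjugate of) $D$ is contained in $D'$, so $|D|\le |D'|$. By Lemma \ref{blpar}, $D'$ is purely noncentral, and by Theorem \ref{propdist}(i), no $p$-centric subgroup can be purely noncentral; therefore $D'$ is noncentric. Since defect groups are $p$-radical, we have $D' \in \Bb(G)\setminus\Cc(G)$. The maximality hypothesis $(iii)$ then forces $|D'|\le |D|$, and combined with the previous inequality we obtain $|D'|=|D|$. Thus $D'$ is conjugate to $D$, and the defect group of $B'$ equals $D$ up to conjugation.

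The heart of the argument lies in passing from the topological content of \ref{mhusch} (nontriviality of the Steinberg module of the building $\Delta^{S'}$) to the nonvanishing of a virtual module, and this is essentially a one-line observation at the level of virtual characters; the remaining steps are routine applications of Sawabe's criterion and the Brauer--Green correspondence. I expect the main obstacle, if any, to be bookkeeping rather than substantive: ensuring that the homotopy equivalence of \ref{mhusch}, which is only asserted to be equivariant for $N_G(T)\cap N_G(D)$, nevertheless yields a nonzero virtual $\fk N_G(D)$-module—an issue resolved by the observation that nonvanishing in the Green ring is detected by the virtual dimension, which is a topological invariant independent of the group action.
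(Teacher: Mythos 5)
Your proof is correct and follows essentially the same route as the paper's: Sawabe's criterion \ref{swbe}(ii) applied to the maximal noncentric $p$-radical subgroup $D$, the identification of $\Dd(G)^D$ with a spherical building via \ref{mhusch} to show the reduced Lefschetz module is nonzero as a virtual $\fk N_G(D)$-module (noting, as the paper does, that only an ordinary homotopy and not $N_G(D)$-equivariance is needed for nonvanishing), and then the vertex--defect inclusion $D \leq D'$ plus maximality of $D$ among noncentric $p$-radical subgroups to force $D' = D$. Your citation of Theorem \ref{propdist}(i) (``centric implies distinguished, hence not purely noncentral'') is an equivalent substitute for the paper's direct observation that $Z(S) \leq C_G(D') = Z(D')$ would contradict $D'$ being purely noncentral; both are the same short argument.
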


\begin{proof}
Theorem \ref{noncenthm} tells us that $\Dd(G)^T$ is equivariantly homotopy equivalent to the building $\Delta$ for the Lie group $H$. Thus $\Dd(G)^D$ is homotopy equivalent to $\Delta^D$, a building by \cite{musch95}, see also \ref{mhusch}. This latter homotopy is only equivariant with respect to $N_G(T) \cap N_G(D)$, but all we need is an ordinary homotopy to conclude that the reduced Lefschetz module $\tL_{N_G(D)}(\Dd(G)^D; \fk)$ is nonzero. Thus there is at least one indecomposable summand of $\tL_G(\Dd(G); \fk)$ with vertex $D$ by Sawabe's Proposition  \ref{swbe}(ii). The defect group $D_0$ of the block containing this summand must contain $D$, and if this block is not the principal block, the defect group $D_0$ is purely noncentral by Lemma \ref{blpar}. If $D \lneq D_0$ then $D_0$ is centric (note $D_0$ is $p$-radical, and $D$ was assumed to be a maximal noncentric $p$-radical subgroup). But for any Sylow $p$-subgroup $S$ of $G$ with $D_0 \leq S$, we have $Z(S) \leq C_G(D_0)$ and thus $Z(S) \leq D_0$ since $Z(D_0)$ is the unique Sylow subgroup of $C_G(D_0)$. This contradicts $D_0$ being purely noncentral. Thus $D = D_0$.
\end{proof}

With additional information, including $DC_G(D) = D.C_H(S')$, as in Propositions \ref{normalH} and \ref{simpleH}, we would be able to obtain a homotopy equivariance with respect to $N_G(D)$, so that $\tL_{N_G(D)}(\Dd(G)^D; \fk)$ would be the irreducible inflated extended Steinberg module for the Lie group $O^{p'}(C_H(S'))$. Then there would be exactly one indecomposable summand of $\tL_G(\Dd(G); \fk)$ with vertex $D$.

%%%%%%%%%%%%%%%%%%%%%%%%%%%%%%%%%%%%%%%%%%%%%%%%%%%%%%%%%%%%%%%%%%%%%%%%%%%%%%%%%
\section{Applications: sporadic simple groups of parabolic characteristic p}%%%
%%%%%%%%%%%%%%%%%%%%%%%%%%%%%%%%%%%%%%%%%%%%%%%%%%%%%%%%%%%%%%%%%%%%%%%%%%%%%%%%%

In this final Section we consider a few examples. We discuss how our results apply to the sporadic simple groups of parabolic characteristic $p$. We start with even characteristic, after which we discuss examples in characteristic $3$.

\subsubsection*{{\bf Sporadic simple groups of parabolic characteristic $\mathbf{2}$}}
Among the $26$ sporadic groups nine have local characteristic $2$; these are: $M_{11}$, $M_{22}$, $M_{23}$, $M_{24}$, $J_1$, $J_3$, $J_4$, $Co_2$ and $Th$. The twelve groups given in Table $6.1$ have parabolic characteristic $2$ and contain noncentral involutions whose centralizers have component type. The remaining five groups have central involutions with centralizer of component type; these are $Co_3$, $Fi_{23}$, $McL$, $O'N$ and $Ly$.

In Table $6.1$ below we gather information regarding the sporadic simple groups of parabolic characteristic $2$. The notation is as follows. We let $t$ be a noncentral involution in $G$. In the second column, the centralizer $C_G(t)$ is given. $H$ denotes a group associated to the component (a central quotient or an extension of the component) of $C_G(t)$, usually a group of Lie type described in the Atlas \cite{atlas} notation. We are studying $\Dd_2(G)$, the collection of $2$-centric and $2$-radical subgroups, or equivalently the distinguished $2$-radical subgroups, as defined in \ref{hatdef}. In the fourth column we describe the fixed point set for $T=\langle t \rangle$; this is usually a building, and our notation follows \cite[p. 214]{ronanbook}. For the last three groups, $\Dd_2(G)^T$ is not a building, but it is homotopy equivalent to $\Dd_2(H)$ and $H$ also has parabolic characteristic $2$. Columns $5$, $6$ and $7$ contain information on a vertex $V$ of a non-projective indecomposable summand of $\tL_{C_G(t)}(\Dd_2(G)^t; \mathbb{F}_2)$. Finally, $H_V$ denotes the Lie type component of $VC_G(V)$ and the column $\Dd_2(G)^V$ gives the type of the corresponding building associated to this fixed point set under the action of $V$.

The main sources of information are the Atlas \cite{atlas}, Chapter $5$ in the third volume of the classification of the finite simple groups \cite{gls3} and Landrock's paper \cite{lan78} on the non-principal $2$-blocks of the sporadic simple groups. Additional references are provided in the foremost right column of the table.

The following Proposition is a direct consequence of our results from Theorems \ref{noncenthm} and \ref{defT} combined with the information from Table $6.1$.

\begin{prop}\label{par2}
Let $G$ be a sporadic simple group of parabolic characteristic $2$ and let $\tL_G$ denote the reduced Lefschetz module for the complex of $2$-radical and $2$-centric subgroups in $G$, over $\mathbb{F}_2$.
\begin{list}{\upshape\bfseries}
{\setlength{\leftmargin}{.8cm}
\setlength{\labelwidth}{1cm}
\setlength{\labelsep}{0.2cm}
\setlength{\parsep}{0.5ex plus 0.2ex minus 0.1ex}
\setlength{\itemsep}{.6ex plus 0.2ex minus 0ex}}
\item[$(i).$] Let $G$ be one of the groups $M_{12}$, $J_2$, $HS$, $Ru$, $Suz$, $Fi_{22}$, $He$, $Co_1$ or $BM$. The virtual module $\tL_G$ has precisely one indecomposable non-projective summand $M$, and this summand lies in a non-principal $2$-block of $\mathbb{F}_2G$ and has vertex $V$ equal to the defect group of this block. The module $M$ is the Green correspondent of the inflated extended Steinberg module for the building given in column $\Dd_2(G)^V$ of Table $6.1$.
\item[$(ii).$] If $G$ is one of the groups $Fi'_{24}, HN$ or $M$ then the reduced Lefschetz module $\tL_G$ has at least one indecomposable non-projective summand, which lies in a non-principal block of $\mathbb{F}_2G$ and has vertex $V$ equal to the defect group of this block.
\end{list}
\end{prop}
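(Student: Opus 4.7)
The plan is to apply the machinery of Section 5---in particular Theorems \ref{noncenthm}, \ref{defT}, and \ref{noncentricthm}---to each sporadic group of parabolic characteristic $2$ listed in Table 6.1, using the table together with the Atlas \cite{atlas}, \cite{gls3}, and \cite{lan78} as the source of the needed $2$-local data. For each $G$, I would fix a noncentral involution $t$ in the conjugacy class indicated in the table, set $T = \langle t \rangle$ and $C = C_G(t)$, and read off the factorization $C = O_C.H.K$ from column 2. I would then verify conditions (N1), (N2), and (N3) of Theorem \ref{noncenthm}: that $O_C$ is purely noncentral in $G$, that $L = O_C.H$ is the generalized Fitting subgroup and is normal in $N_G(T)$, and that a triple $(S_L, S_C, S)$ with $Z(S) \cap S_L \neq 1$ exists. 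In all cases of the table, $L = C_G(O_C)$, so that (N3) is automatic by \ref{landoc}. Theorem \ref{noncenthm} then delivers the $N_G(T)$-equivariant homotopy equivalence $\Dd_2(G)^T \simeq \Dd_2(H)$ recorded in column 4.

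For part (i), where $H$ is a simple group of Lie type in characteristic $2$, I would next invoke Theorem \ref{defT}. The Lie-type hypothesis is column 3 of the table; the remaining hypotheses (a)--(c) of that theorem---namely $T = O_C$, $K = C/L$ a $2'$-group, and $[N_G(T):C]$ coprime to $p$---are read off from the Atlas structure of $N_G(T)$. Theorem \ref{defT} then produces a unique indecomposable summand $M$ of $\tL_G$ with vertex $V = T$, lying in a nonprincipal block whose defect group is $V$, and the proof of that theorem identifies $M$ as the Green correspondent of the inflated extended Steinberg module of $H$, via \ref{mhusch} and the Brauer/Green correspondence. Sawabe's Proposition \ref{swbe}(i), applied with the maximal noncentric $2$-radical $V$ of column 5, rules out vertices of strictly larger order, so $M$ is the \emph{only} nonprojective summand of $\tL_G$.

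For part (ii), with $G \in \{Fi'_{24}, HN, M\}$, the component $H$ in column 3 is no longer of Lie type in characteristic $2$, so Theorem \ref{defT} does not apply directly. Instead, I would use Theorem \ref{noncenthm} to get $\Dd_2(G)^T \simeq \Dd_2(H)$, observe that $H$ itself has parabolic characteristic $2$, and either iterate Theorem \ref{noncenthm} inside $H$ or appeal to the known $2$-local geometry of $H$ (references in the last column of Table 6.1) to conclude that $\tL_{C_G(t)}(\Dd_2(G)^t; \mathbb{F}_2)$ carries at least one nonprojective indecomposable summand with the prescribed vertex $V$. Combined with Sawabe's Proposition \ref{swbe}(ii) for $V$ the maximal noncentric $2$-radical subgroup, Theorem \ref{noncentricthm} yields a nonprojective indecomposable summand of $\tL_G$ with vertex $V$. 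Lemma \ref{blpar} together with the maximality of $V$ among noncentric $2$-radicals forces the defect group of the ambient block to coincide with $V$, exactly as in the final paragraph of the proof of Theorem \ref{noncentricthm}.

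The main obstacle is the case-by-case verification of the group-theoretic hypotheses: identifying the correct noncentral involution class, computing $O_C$ and the generalized Fitting subgroup of $C$, checking that $C/L$ has the claimed shape, confirming that $V$ is indeed of maximal order among noncentric $2$-radical subgroups of $G$, and---for the uniqueness clause in (i)---verifying the splitness hypotheses of Propositions \ref{normalH}/\ref{simpleH} needed to promote the $N_G(T)\cap N_G(V)$-equivariant homotopy of \ref{mhusch} to an $N_G(V)$-equivariant one. These checks rest on the detailed $2$-local structure of each sporadic group; once they are in place, the conclusions follow mechanically from the Section 5 theory, and the mathematical content of the argument lies entirely there.
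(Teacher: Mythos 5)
The overall architecture of your proposal matches the paper's: reduce to fixed-point-set computations via Theorem \ref{noncenthm}, identify the Lefschetz module of the fixed-point set as an inflated extended Steinberg module, and locate the vertex via Theorem \ref{defT} and Sawabe's Proposition \ref{swbe}. But there is a concrete error in your verification of hypothesis (N3).

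You assert that in all cases of Table 6.1 one has $L = C_G(O_C)$, so that (N3) is automatic by \ref{landoc}. This is false. Take $G = M_{12}$, $t$ of class $2A$: here $O_C = \langle t \rangle = 2$, so $C_G(O_C) = C_G(t) = 2 \times A_5.2 = C$, while $L = 2 \times A_5 \neq C$. The same phenomenon occurs for $HS$ (where $C(2B) = 2 \times A_6.2^2$ and $O_C = 2$, so $C_G(O_C) = C$ but $L = 2 \times A_6$), and indeed for any case where $O_C = \langle t \rangle$ and $K = C/L$ is nontrivial. Remark \ref{landoc} is careful to say that $L = C_G(O_C)$ holds only ``in some examples.'' The paper handles (N3) by a genuine case analysis: it is trivial when $K$ has odd order (so $S_L = S_C$, covering $J_2$, $Ru$, $M$, and $t=2A$ in $Fi_{22}$, and all the $VC_G(V)$ cases), a separate Lemma is proved showing that for $M_{12}$, $HS$, $Suz$, $He$, $BM$ (class $2A$) every $2$-central involution of $G$ lying in $C_G(t)$ actually lies in $\overline{L}$, the $L = C_G(O_C)$ shortcut is invoked only for $Suz$, $He$, $t=2A$ in $Fi_{22}$, $t=2B$ in $Co_1$, $t = 2C$ in $BM$, a reference to Harada covers $HN$, and a bespoke argument using $|C_G(2^2)|_2 = 2^{14}$ handles $Fi'_{24}$. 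Without something replacing this work your appeal to Theorem \ref{noncenthm} is unjustified.

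A smaller divergence: for part (ii) the paper does not route through Theorem \ref{noncentricthm} as you propose, but instead applies Theorem \ref{defT} directly to $C = VC_G(V)$ with $T = V$ (the group $VC_G(V)$ does have a Lie-type component even when $C_G(t)$ does not). This yields the uniqueness of the summand with vertex $V$ and the defect-group identification in one step, which your Theorem-\ref{noncentricthm}-plus-Sawabe route does not deliver as cleanly. Relatedly, in part (i) you lean on Sawabe's \ref{swbe}(i) to ``rule out vertices of strictly larger order'' and conclude uniqueness, but Sawabe alone does not exclude other vertices of equal or smaller order, nor multiple summands sharing a vertex; the paper's uniqueness comes from the if-and-only-if and ``unique summand'' clauses of Theorem \ref{defT} applied at both $T = \langle t \rangle$ (ruling it out) and $T = V$ (producing exactly one summand), together with Proposition \ref{vertexQ} to constrain candidate vertices.
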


\begin{num}
In every case, the vertex $V$ referred to in the Proposition is the unique maximal noncentric $2$-radical subgroup of $G$, as suggested by the work of Sawabe, see Proposition \ref{swbe}(i). For example, when $G$ is either $HN$ or $M$, there are five conjugacy classes of noncentric $2$-radical subgroups, with the semidihedral group $SD_{16}$ maximal; see \cite[Table VI]{y02} for $HN$ and \cite[Table 2]{y05a} for $M$. In both of these cases\footnote{Recall the notation described in Introduction.}, there are inclusions $2A \leq 2A^2 \leq D_8 \leq SD_{16}$ and also $2A \leq Q_8 \leq SD_{16}$. For $G = BM$, there are four conjugacy classes of noncentric $2$-radical subgroups, with inclusions $2A \leq 2A_2C_1 \leq D_8$ and $2C^3 \leq D_8$; see \cite[Table 1]{y05a}. For $G = Fi'_{24}$, there are four classes of noncentric $2$-radical subgroups, including two classes of elementary abelian subgroups $2A^2$ of rank $2$, with inclusions $2A \leq 2A^2_I \leq D_8$ and $2A \leq 2A^2_{II} \leq D_8$; see \cite[Table 2]{ky}. Regarding the remaining groups from Table $6.1$: $J_2$, $Ru$ and $Fi_{22}$ have one conjugacy class of noncentric $2$-radical subgroups each; $M_{12}$ and $HS$ have two classes of such subgroups each and $Co_1$, $Suz$ and $He$ have three classes each. It is easy to check that the poset of noncentric $2$-radical subgroups has a unique maximal member in each case. Although not all the details are given, the relevant references for each sporadic simple group can be found in Chapter $7$ of the book by Benson and Smith \cite{bs04}.
\end{num}

\begin{num}
The three cases in Table $6.1$ in which the fixed point set is contractible are verified using Theorem \ref{ocintilde} and Lemma \ref{cenvertlef} since $O_C$ contains central elements. The following information is available in the Atlas \cite{atlas}. For $t = 2C$ in $G = Fi_{22}$, there is a purely central elementary abelian $2$-group $2B^4$ of rank $4$ in the subgroup $2^5 = 2A_6B_{15}C_{10}$ of $2^{5+8}$. For $t = 2C$ in $G = Co_1$, the elementary abelian $2^{11} = 2A_{759}C_{1288}$ contains central elements of class $2A$. The normalizer is $N(2^{11}) = 2^{11}:M_{24}$, and the subgroup $M_{12}.2 \leq M_{24}$ has index $1288$. And for $t = 2D$ in $G = BM$, the elementary abelian subgroup $2^9$ of $2^9.2^{16}$ contains a purely central $2B^8$ of rank $8$. The normalizer is $N(2B^8) = 2^9.2^{16}.Sp_8(2)$. The lift of the group $2^9$ in $2.BM \leq M$ is a group $2^{10}$ referred to as an ``ark" in the work of Meierfrankenfeld and Shpectorov \cite{meierspec02,meirmon03}. The notation $O_8^+(2)$ in $C(2D) = 2^9.2^{16}.O_8^+(2).2$ is the Atlas notation for the simple group, also denoted $\Omega_8^+(2)$, so that $O_8^+(2).2 = SO_8^+(2)$.
\end{num}

\begin{num}
The next step in the proof of the Proposition \ref{par2} is to verify that the hypotheses of Theorem \ref{noncenthm} hold for $C_G(t)$ and for $VC_G(V)$. In particular, we need to check that condition $({\rm N3})$ holds in these groups. We first recall this condition. Let $T$ be a $p$-subgroup of $G$. If $TC_G(T)=C = O_C.H.K = L.K$, then $({\rm N3})$ asserts that there exists a triple of Sylow $p$-subgroups $S_L \trianglelefteq S_C \le S$ of $L, C$ and $G$ respectively, such that $Z(S) \cap S_L \ne 1$; in particular the group $S_L$ is distinguished in $G$. Note this condition is trivially satisfied when $K$ is a group of odd order, since then $S_L = S_C$. This happens in the following four cases: when $G$ equals $J_2$, $Ru$ or $M$, and for $t = 2A$ in $G = Fi_{22}$. It also occurs when $C = VC_G(V)$ for every group $V$ from the fifth column of Table $6.1$.

We now prove a Lemma which yields condition $({\rm N3})$ in five additional cases from Table $6.1$.

\begin{lem}
Let $G$ be one of the following groups: $M_{12}$, $HS$, $Suz$, $He$, or $BM$. Let $t$ be a noncentral involution in $G$ (where for $G = BM$ we assume $t$ is of class $2A$), and let $L$ be the component of the centralizer $C_G(t)$. Let $\overline{L} = L$ except in the case $G = HS$, where $\overline{L} = S_6 \leq Aut(A_6)$. Then the $2$-central involutions of $G$ which lie in $C_G(t)$ actually lie in $\overline{L}$.
\end{lem}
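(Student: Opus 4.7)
The plan is to proceed case by case for the five groups, using the explicit structure of $C := C_G(t) = O_C.H.K$ recorded in the Atlas. For each case I would examine the involutions in the non-trivial cosets of $\overline{L}$ inside $C$ and verify that none of them lies in the $2$-central class of $G$.

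The main tool will be a comparison of $G$-centralizer orders: a fixed $2$-central involution $z_0$ of $G$ has $|C_G(z_0)|$ tabulated in the Atlas, so for a candidate outer involution $z \in C \setminus \overline{L}$ it is enough to compute $|C_G(z)|$ from $|C_C(z)|$ together with the fusion of the $C$-class of $z$ into $G$, and to check that the two values disagree. In each of the five groups the quotient $K$ has very small $2$-part, so the number of $C$-classes of outer involutions to inspect is small and the check is concrete.

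For $M_{12}$, $Suz$ and $He$, the outer coset splits into only one or two $C$-classes, so the centralizer-order comparison is brief. The case $HS$ is precisely the reason for introducing $\overline{L}$: the component is $A_6$, a $2$-central involution of $HS$ can lie in the outer coset, and among the three index-$2$ extensions $S_6$, $PGL_2(9)$, $M_{10}$ of $A_6$ inside $\mathrm{Aut}(A_6) = P\Gamma L_2(9)$ only the transpositions in $S_6$ correspond to the $2$-central class of $HS$; this forces the choice $\overline{L} = S_6$.

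The hardest case will be $G = BM$, with $C = 2.{}^2E_6(2){:}2$ and $H = {}^2E_6(2)$. Here a hands-on enumeration inside $H$ is impractical; I would instead rely on the published character table of $BM$ and on the known fusion of the outer involutions of $2.{}^2E_6(2){:}2$ into $BM$, to see that the outer coset of $L$ in $C$ fuses to a $BM$-class whose centralizer order differs from that of the $2$-central class, thereby ruling out the possibility and completing the proof.
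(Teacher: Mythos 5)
Your overall case-by-case plan is reasonable, and for $HS$ you correctly identify the feature (among the index-two subgroups of $\mathrm{Aut}(A_6)$ above $A_6$, only $S_6$ carries the relevant class) that forces $\overline{L} = S_6$. But the ``main tool'' you propose -- comparing $|C_G(z)|$ obtained from $|C_C(z)|$ \emph{together with the fusion} to the centralizer order of a $2$-central involution -- is circular: once you know which $G$-class the involution $z$ fuses to, you already know whether it is $2$-central, and the centralizer-order comparison adds nothing. The genuinely hard step, which you leave unaddressed, is \emph{determining} that fusion. Mere divisibility $|C_C(z)| \mid |C_G(z)|$ usually does not pin it down: in $M_{12}$, for instance, the $C$-centralizers of the outer involutions in $2 \times S_5$ have orders $16$ and $24$, both of which divide both $|C_{M_{12}}(2A)| = 240$ and $|C_{M_{12}}(2B)| = 192$, so no conclusion follows from orders alone.

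The paper's treatment is quite different. For most of the groups it simply cites the literature (Yoshiara for $Suz$, Aschbacher for $HS$, Held for $He$, Segev for $BM$, and an earlier paper of the authors for the other cases), precisely because the fusion analyses are nontrivial and already done elsewhere. The one case it argues directly, $M_{12}$, it handles not by centralizer orders but by structural fusion tricks: the involutions in $A_5$ are squares of elements of order $4$ and hence land in the $2$-central class via the Atlas power maps; the existence of a purely noncentral elementary abelian $2^2$ rules out the transpositions; and a vanishing class-multiplication coefficient (the fact that the $2$-central class is closed under nontrivial commuting products) eliminates the diagonal involutions. These are the kinds of arguments your plan would actually need in order to fix the fusion, but you have not proposed them. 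So the plan is not wrong in spirit, but as stated it would stall at exactly the step where the real content lies; you should either supply fusion arguments of the above type per group, or cite the sources (as the paper does) that establish the fusion.
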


\begin{proof}
The first three groups (and also $J_2$ and $Ru$) are discussed in \cite[Lemma 2.3]{mgo5}. For $G = HS$, this result is Lemma $1.7$ of Aschbacher \cite[p. 24]{a03}. For $G=Suz$, see the paragraph on p. $456$ before Lemma $1$ of Yoshiara \cite{y01}. For $G = M_{12}$, the centralizer $C_G(t) = 2 \times S_5$ has five conjugacy classes of involutions, fusing to two classes in $M_{12}$. The involutions in $A_5$ are squares of elements of order $4$, which are $2$-central in $M_{12}$ by the Atlas \cite{atlas}. The other four classes in $C_G(t)$ are noncentral in $M_{12}$.  There exists a purely noncentral elementary abelian $2^2$ in $M_{12}$, so the involutions in $S_5 \setminus A_5$ are noncentral.  The $2$-central involutions are closed under nontrivial commuting products; the class multiplication coefficient is zero \cite[p. 107, 162]{bs04}. Thus the diagonal involutions in $2 \times A_5$ are noncentral in $M_{12}$. Next, for $G = He$, the Lemma follows from \cite[p.277--278]{held69}. Finally, for $t = 2A$ in $G = BM$, the result follows from Segev \cite[2.5.2 and 3.18]{segev91}.
\end{proof}

Note that if $L = C_G(O_C)$ then whenever $S_L \leq S$, we have $Z(S) \leq L$. This argument shows that condition $({\rm N3})$ is satisfied for $G$ equal to $Suz$, $He$, also for $t=2A$ in $Fi_{22}$, $t=2B$ in $Co_1$ and for $t = 2C$ in $G = BM$. Condition $({\rm N3})$ holds for $G = HN$ by a description of the groups $S_L$ and $S_C$ given in \cite[(L) on p. 128]{harada75}, where it is shown that $Z(S_C) = 2^2 \leq S_L$.

Finally, let $G = Fi_{24}'$, with $C_G(t) = 2^.Fi_{22}:2$ and $L = 2^.Fi_{22}$. Let $S$ denote a Sylow $2$-subgroup of $Fi_{22}$, with $S_L = 2^.S$ and $S_C = 2^.S:2$. Let $z$ be an involution in the center $Z(S_C)$ and assume that $z$ is not in $S_L$.  Consider the centralizer $C_G(2^2)$ of the elementary abelian $2^2 = \langle t, z \rangle$, which must contain $S_C$.  But by \cite[Table 10]{wi87}, this centralizer is divisible by only $2^{14}$ (it equals either $2^2 \times O_8^+(2):3$ or $2^2 \times 2^6:U_4(2)$).  This contradiction implies that the subgroup $\Omega_1(Z(S_C))$ containing all involutions of the center $Z(S_C)$ must be a subgroup of $S_L$. This implies that condition $({\rm N3})$ holds.
\end{num}

\begin{num}
In what follows we determine those $2$-subgroups which are candidates for the vertices of $\tL_G$. Note that Proposition \ref{vertexQ} implies that we need only to consider those $2$-subgroups $Q$ which are vertices of the reduced Lefschetz module $\tL_{C_G(t)}(\Dd_2(G)^t; \mathbb{F}_2)$ for the action of the centralizer $C_G(t)$, with $t$ an involution in $Z(Q)$, on the complex $\Dd_2(H)$ for $H$ described in the fourth column of Table $6.1$. This yields those groups given in column five, denoted $V$.

Let us consider the first nine groups in Table $6.1$. Theorem \ref{defT} applies for both $C = C_G(t)$ and for $C = VC_G(V)$ in every case.  Observe that for $G = BM$, the group $V = 2^2 = 2A_1C_2$ is not a vertex of a summand of the reduced Lefschetz module for $G$ since the centralizer of $V$ has index $2$ in the normalizer of $V$.

In the remaining three cases, Theorem \ref{defT} does not apply if we let $C = C_G(t)$. The subgroup $C_G(t)$ is of the form $O_C.H.K$ but $H$ is not a group of Lie type. For each of these groups, Theorem \ref{noncenthm} tells us that the fixed point set $\Dd_2(G)$ is equivariantly homotopy equivalent to the complex of $2$-centric and $2$-radical subgroups in the component $H$ of $C_G(t)$. We can, however, apply Theorem \ref{defT} to the group $C = VC_G(V)$ from the sixth column of the Table. Specifically, $V = D_8$ in $G = Fi'_{24}$ and $V = SD_{16}$ in $G = HN$ and $G = M$, obtaining the existence of summands with vertex equal to the dihedral or semidihedral group.
For $G = M$ it is possible that $T = 2A$ is a vertex of a summand of the reduced Lefschetz module (if $BM$ has a projective summand in its reduced Lefschetz module). For $G = HN$, we do not know the vertices for the action of $2.HS:2$ on the complex $\mathcal D_2(HS)$. Finally, we remark that for $G = Fi'_{24}$ and for $G = HN$ the group $V = 2^2$ is not a vertex, since the centralizer of $V$ has even index in the normalizer of $V$.
\end{num}

\subsubsection*{{\bf Our results and the Benson-Grizzard-Smith conjecture}}

The book of Benson-Smith \cite{bs04} describes a $2$-local geometry for each of the $26$ sporadic finite simple groups. On pages $105-108$, they list those sporadic groups for which the $2$-local geometry is equivariantly homotopy equivalent to the Brown complex\footnote{These are: $M_{11}$, $M_{22}$, $M_{23}$, $M_{24}$, $J_1$, $J_3$, $J_4$, $McL$, $Co_2$, $Th$, $Ly$.} $\Ss_2(G)$ \cite[(5.9.2)]{bs04}, or to the Benson complex\footnote{Corresponds to a certain collection of elementary abelian $2$-groups, see \cite[(5.10.1)]{bs04} for a precise definition.}$ ^,$\footnote{These are: $M_{12}$, $J_2$, $HS$, $Suz$, $Co_3$, $Ru$.} $\mathcal{E}_2(G)$ \cite[(5.10.3)]{bs04}, or to the complex\footnote{These are: $M_{12}$, $J_2$, $Suz$, $Co_1$, $Fi_{22}$, $Fi_{23}$, $Fi_{24}'$, $HN$, $B$, $M$, $He$, $Ru$, $O'N$.} $\Dd_2(G) = B_2^{cen}(G)$ \cite[(5.11.3)]{bs04}. The eleven groups in list \cite[(5.9.2)]{bs04} have projective reduced Lefschetz modules; all but one case\footnote{The group $Ly$.} appear in \cite{rsy} and \cite{sy}. The six groups in list \cite[(5.10.3)]{bs04} are studied by Grizzard in his thesis \cite{grithesis}. Grizzard computes reduced Lefschetz characters for these six groups, and also for the groups $O'N$ and $He$. The characters were already known for three of these eight groups, namely $M_{12}$ \cite[p. 44]{bw93}, $J_2$ \cite[after Thm. 7.7.1]{bs04} and $HS$ (Klaus Lux, unpublished).

Four of the groups in the list \cite[(5.10.3)]{bs04} also appear in \cite[(5.11.3)]{bs04}, and it seems that $HS$ should also appear in both lists. Note that although $HS$ does not appear in \cite[(5.11.3)]{bs04}, in \cite[p. 193]{bs04} a quotient map is described yielding a homotopy equivalence of the $2$-local geometry of $HS$ with $\Dd_2(HS)$. The sporadic groups of parabolic characteristic $2$ given in Table $6.1$ consist of eleven of the thirteen groups in list \cite[(5.11.3)]{bs04}, plus $HS$. Thus Proposition $6.1$ applies to the $2$-local geometries for these twelve groups. The remaining two groups in \cite[(5.11.3)]{bs04} are $O'N$ and $Fi_{23}$, and in both groups all involutions are central. Then our distinguished $2$-radical complex equals the Bouc complex of all nontrivial $2$-radical subgroups, which is homotopy equivalent to the Brown complex of all nontrivial $2$-subgroups. Thus the results in this paper do not apply to the $2$-local geometries for the two groups $O'N$ and $Fi_{23}$.

In a footnote in \cite[p. 164]{bs04}, the following comment appears: ``For all $15$ sporadic groups $G$ in (5.10.3) and (5.11.3) for which the $2$-local geometry is not homotopy equivalent to $\Ss_2(G)$, it seems that the reduced Lefschetz module involves an indecomposable in a suitable non-principal $2$-block of $G$ of small positive defect". Grizzard refers to this as the ``Nonprincipal Block Observation" \cite[Rem. 3.1]{gri} and conjectures that each sporadic group affording a nonprojective Lefschetz module will have a nonprojective part in a nonprincipal block. Grizzard verifies this conjecture for the eight groups in his thesis. Grizzard also shows that for seven of his groups, the defect group of this nonprincipal block has order equal to the ratio of the $2$-part $|G|_2$ of the order of the group and the $2$-part of the reduced Euler characteristic of the $2$-local geometry. This was also predicted by Steve Smith; see for example \cite{sds05}. Grizzard observed that this was not true for $O'N$, which also differed from the other examples by having a nonprojective part of the reduced Lefschetz module lying in the principal block.

Sawabe approached this latter idea from the viewpoint of vertices of modules, and proves a theorem concerning a lower bound for the $p$-part of the reduced Euler characteristic of the complex $\Dd_p(G)$ \cite[Prop. 7]{sa06} in terms of the order of a noncentric $p$-radical subgroup $V$ of maximal order. An unpublished result of Sawabe \cite{sawp} also shows that such a maximal noncentric $p$-radical subgroup $V$ is a vertex of a summand of the reduced Lefschetz module $\tL_G(\Dd(G); \fk)$ if and only if $\tL_W(\Ss(W); \fk) \neq 0$, for $W = N_G(V)/V$. This result can be applied to many groups, including all of those in Table $6.1$, showing the existence of a summand of the reduced Lefschetz module with vertex $V$. This result also applies to the sporadic group $Fi_{23}$ and the maximal noncentric $2$-radical subgroup $V = D_8$ (see \cite{anb99} for the radical subgroups of $Fi_{23}$, and for the normalizer $N_G(V) = D_8 \times Sp_6(2)$). It follows that the dihedral group $D_8$ is a vertex of a summand of the reduced Lefschetz module for the $2$-local geometry of $Fi_{23}$. But the work of Sawabe does not give any information on the block in which this summand lies. Also, his results yield existence of nonprojective summands for all the groups in Table $6.1$, but does not supply the uniqueness of the nonprojective summand that we obtain in Proposition $6.1(i)$. Indeed, it has been shown that there are examples where there are more than one nonprojective summand. Grizzard shows that the reduced Lefschetz module for the $2$-local geometry of the sporadic group $O'N$ has at least two nonprojective summands, one in the principal block and one in a block of defect three. And for $Co_3$, there are precisely three nonprojective summands, all lying in a nonprincipal block of defect three; see \cite{mgo5}.

\subsection*{Sporadic simple groups of parabolic characteristic $\mathbf{p=3}$}

We now discuss the cases when $G$ is a sporadic simple group of parabolic characteristic $3$. The notations from Table $6.1$ are maintained in Table $6.2$ and below. In what follows $t$ will denote an element of order $3$ and $O_C = O_3(C_G(\langle t \rangle))$.

\begin{prop}
Let $G$ be a sporadic simple group of parabolic characteristic $3$ and let $\tL_G$ denote the reduced Lefschetz module for the complex of $3$-radical and $3$-centric subgroups in $G$, over $\mathbb{F}_3$.
\begin{list}{\upshape\bfseries}
{\setlength{\leftmargin}{.8cm}
\setlength{\labelwidth}{1cm}
\setlength{\labelsep}{0.2cm}
\setlength{\parsep}{0cm}
\setlength{\itemsep}{0cm}}
\item[$(i).$] Let $G$ be one of the groups $M_{12}$, $Co_3$, $J_3$, $Co_2$, $Fi_{22}$, $Fi_{23}$, $Fi'_{24}$ or $Th$. The virtual module $\tL_G$ has precisely one indecomposable non-projective summand $M$, and this summand lies in a non-principal $3$-block of $\mathbb{F}_3G$ and has vertex equal to the defect group of this block. The module $M$ is the Green correspondent of the inflated extended Steinberg module for the building given in the eighth column of Table $6.2$.
\item[$(ii).$] If $G$ is one of the groups $HN, BM$ or $M$ then the reduced Lefschetz module $\tL_G$ has at least one non-projective indecomposable summand which lies in a non-principal block of $\mathbb{F}_3G$.
\end{list}
\end{prop}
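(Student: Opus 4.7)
The plan is to follow the blueprint of Proposition \ref{par2}, working case by case from the data in Table $6.2$. For each sporadic group $G$ of parabolic characteristic $3$ listed in the statement, I pick a noncentral $3$-element $t$ with centralizer presented (as in the table) in the form $C_G(\langle t \rangle) = O_C.H.K$, set $T=\langle t \rangle$ and $L = O_C.H$, and verify that the hypotheses of Theorems \ref{noncenthm} and \ref{defT} hold. Here $H$ will be a finite simple group of Lie type in characteristic $3$ for the groups in part (i), and either a sporadic or alternating group of parabolic characteristic $3$ (but not of Lie type in characteristic $3$) for the groups in part (ii).

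For part (i), I would verify the three conditions of Theorem \ref{noncenthm}. Condition (N1) is that $O_C$ is purely noncentral in $G$; this is read off from the Atlas by checking that $O_C$ contains no element from the $G$-conjugacy class of $3$-central elements. Condition (N2) holds because $H$, being of Lie type in characteristic $3$, has parabolic characteristic $3$, and $L$ is the generalized Fitting subgroup of $C$, hence characteristic in $C$ and normal in $N_G(T)$. Condition (N3) is verified exactly as in the $p=2$ case using \ref{landoc}: in most cases $L = C_G(O_C)$ (so $Z(S) \leq L$ for any Sylow $3$-subgroup $S$ containing $O_C$), and in the remaining cases one produces the triple $(S_L,S_C,S)$ by direct inspection of the Sylow structure. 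With (N1)--(N3) in place, Theorem \ref{noncenthm} gives an $N_G(T)$-equivariant homotopy equivalence $\Dd(G)^T \simeq \Dd(H)$. One then applies Theorem \ref{defT}: conditions (a) $T=O_C$, (b) $K$ is a $3'$-group, and (c) $|N_G(T):C|$ is coprime to $3$, are checked from the column for $N_G(T)/C$ in Table $6.2$. Theorem \ref{defT} then yields a unique indecomposable summand of $\tL_G$ with vertex $T=O_C$, lying in a block of defect group $T$, and identifies it as the Green correspondent of the inflated extended Steinberg module for $H$; Theorem \ref{noncentricthm} confirms that this block is non-principal (via Lemma \ref{blpar}) and that its defect group equals the vertex.

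For part (ii), the component $H$ appearing in $C_G(\langle t\rangle)$ for $HN$, $BM$, $M$ is not a group of Lie type in characteristic $3$, so Theorem \ref{defT} does not apply directly with this choice of $t$. Instead I would choose $T$ to be the unique maximal noncentric $3$-radical subgroup $V$ of $G$ read off from Table $6.2$, take $C = VC_G(V)$, and apply Theorems \ref{noncenthm} and \ref{defT} (or Theorem \ref{noncentricthm}) to the pair $(G,V)$: in each case, by \ref{remD} and \ref{mhusch}, $VC_G(V)$ has the required shape $V.H_V.K_V$ with $H_V$ of Lie type in characteristic $3$ and $K_V$ a $3'$-group, and the fixed point set $\Dd(G)^V$ is a building $\Delta^{S'}$. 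Then Proposition \ref{swbe}(ii) yields a non-projective summand with vertex $V$, and Lemma \ref{blpar} together with the maximality of $V$ among noncentric $3$-radicals (as in the argument at the end of the proof of Theorem \ref{noncentricthm}) places this summand in a non-principal block whose defect group equals $V$.

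The main obstacle will again be condition (N3): for several of these groups, notably $Fi_{22}$ and $Fi_{23}$, the existence of $3$-central elements inside $O_C$ or its centralizer is a genuinely delicate point — this is precisely the phenomenon acknowledged as ``Ron Solomon's argument for the existence of central elements in the $3$-core of the centralizer of an element of type $3D$ in $Fi_{22}$ and $Fi_{23}$.'' For those cases I would explicitly reproduce the Solomon argument, tracing the fusion of $3$-elements of class $3D$ through the embedding of the centralizer into a Sylow $3$-subgroup, to locate a $3$-central element in $Z(S) \cap S_L$. The remaining verifications (structure of $C_G(t)$, identification of $H$, computation of $|N_G(T):C|$ mod $3$, identification of the building $\Dd(G)^V$) are routine atlas-level bookkeeping and follow the same script as the $p=2$ analysis in Proposition \ref{par2}.
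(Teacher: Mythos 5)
Your overall blueprint is the right one and matches the paper's strategy at a high level — apply Theorem~\ref{noncenthm} plus Theorem~\ref{defT} to the relevant class of noncentral elements of order~$3$, and in the three cases where the component is not of Lie type in characteristic~$3$ pass to the maximal noncentric $3$-radical~$V$. However, there is a concrete confusion in your proposal about what the Solomon argument is for, and this infects the logic of the proof.

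You write that ``the main obstacle will again be condition~(N3)'' and attribute the Solomon argument (existence of central elements in the $3$-core of $C(3D)$ for $Fi_{22}$ and $Fi_{23}$) to the verification of (N3). This is backwards. Condition~(N1) requires $O_C$ to be \emph{purely noncentral}. The Solomon argument establishes the \emph{opposite}: that $O_C$ \emph{does} contain $3$-central elements for $t$ of class $3D$ in $Fi_{22}$, $Fi_{23}$. Its role in the paper is to trigger Theorem~\ref{ocintilde}, so that $\Dd(G)^T$ is contractible and hence, by Lemma~\ref{cenvertlef}, no vertex of $\tL_G$ contains a conjugate of $t$. In other words, the Solomon argument is a contractibility (hence exclusion) step, not an~(N3) step. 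In fact, (N3) is trivially satisfied for $Fi_{22}$ ($K \simeq 2$) and $Fi_{23}$ ($K=1$); the only case where (N3) requires any work is $C(3A)$ in $Fi'_{24}$, and there the paper checks a stronger condition directly. Your proposal is also internally inconsistent on this point, since you first say (N3) follows routinely via~\ref{landoc} and then call it ``the main obstacle.''

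This matters because of the second, larger gap: to conclude that $\tL_G$ has \emph{precisely one} nonprojective indecomposable summand, it is not enough to apply Theorem~\ref{defT} to the single good class $t$. One must also show that no other $3$-element class can support a vertex. In the paper this is the bulk of the proof: for each of the eight entries in Table~$6.2$ marked ``contr.'' one verifies that $O_C$ contains central elements (using Atlas data, Wilson's papers, and — for $3D$ in $Fi_{22}$, $Fi_{23}$ — the Solomon argument), so that Theorem~\ref{ocintilde} and Lemma~\ref{cenvertlef} rule those classes out. Your proposal as written never mentions this exclusion step, so it cannot yield the uniqueness assertion in part~(i). Incorporating it, and correctly separating ``$O_C$ contains central elements $\Rightarrow$ contractible fixed points'' from the (separate, and here trivial) verification of~(N3), would bring your proof into line with the paper's.
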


\begin{proof}
The eight cases in Table $6.2$ in which the fixed point set is contractible are verified using Theorem \ref{ocintilde} and Lemma \ref{cenvertlef} since $O_C$ contains central elements. For two of these cases, the necessary information in is the Atlas \cite{atlas}. Let $t = 3B$ in $G = Co_3$. The elementary abelian $3^5$ is of type $3A_{55}B_{66}$, containing central elements of type $3A$. Next, let $t = 3C$ in $G = Fi'_{24}$. The elementary abelian $3^7$ is of type $3A_{378}B_{364}C_{351}$, containing central elements of type $3B$. The normalizer of $3^7$ is $N(3^7) = 3^7.O_7(3)$ and the subgroup $2.U_4(3):2 \leq O_7(3)$ has index $351$. The group $3^7$ is the natural orthogonal module for $O_7(3)$ with the $364$ subgroups of type $3B$ corresponding to the totally isotropic points \cite[Sect. 4.1]{ky}.

We now quote information from three papers of R. A. Wilson. For $t = 3C$ in $G = Th$ the elementary abelian $3 \times 3^4$ is of type $3B_{40}C_{81}$ with type $3B$ central; see \cite[p.21]{wil88b}.

For $t = 3C$ in $G = Fi_{22}$ the elementary abelian $3^5$ is of type $3A_{36}B_{40}C_{45}$, containing central elements of type $3B$; see \cite[p. 201]{wi84}. Note that $N(3^5) = 3^5.U_4(2).2$ and the subgroup $2.(A_4 \times A_4).2 \leq U_4(2)$ has index $45$.

For $t = 3C$ in $G = Fi_{23}$ the elementary abelian $3^6$, whose normalizer is $3^6.L_4(3).2$, contains elements of type $3A, 3B$ and $3C$, with $3B$ being $3$-central. This follows from the fact that the group $3^6$ has an invariant quadratic form defined by putting the norms of elements of classes $3A$, $3B$ and $3C$ equal to $-1$, $0$ and $1$ respectively; see \cite[Prop. 1.3.3]{wi87}. This indirectly states that $3$-central elements $3B$ occur in $3^6$.

For $G = Fi'_{24}$, with type $3B$ central, see \cite[p. 87]{wi87} for a discussion of the normalizer of a purely central elementary abelian $3B^2$ of rank $2$. The normalizer is $N(3B^2) = 3^2.3^4.3^8:(A_5 \times 2.A_4(2).2)$. The group $3^2.3^4 = 3^6$ has an $A_5$ action on the $3^4$-factor with orbits of size $5$, $10$, $10$, $15$ on the $40$ cyclic subgroups. The corresponding cosets of $3^2 \leq 3^2.3^4$ consist of elements of type $3D$, $3A$, $3B$ and $3C$ respectively. This yields a subgroup $C(3D) = 3^2.3^4.3^6.(A_4 \times 2.A_4) \leq N(3B^2)$. Thus the $3^2$ in $C(3D)$ is the purely central $3B^2$.

Lastly, let $G$ be either $Fi_{22}$ or $Fi_{23}$ and let $t = 3D$. The fact that $O_C$ contains central elements of type $3B$ was not immediately evident from the literature, and we would like to thank Ronald Solomon for providing the following argument. Let $N = N_G(\langle t \rangle)$, and denote $Q = O_3(N)$, which is equal to $O_C$. Then the quotient group $N/Q$ is either $2.S_4 = GL_2(3)$ if $G = Fi_{22}$, or $2 \times 2.S_4$ if $G = Fi_{23}$. Let $P \in Syl_3(N)$ be a Sylow $3$-subgroup of the normalizer, and extend it to a Sylow $3$-subgroup $S$ of $G$, so that $P \leq S \in Syl_3(G)$. Note that $Q \leq P$ is a subgroup of index three. Since $t \in Q \leq P \leq S$, we have $Z(S) \leq C_G(t) \leq N$ and also $Z(S) \leq C_G(Q)$, so that $Z(S) \leq C_N(Q)$. We want to show that $Z(S) \leq Q$. Assume that this is not true, so that $P = Q \cdot Z(S)$ and also under the quotient map $q : N \rightarrow N/Q$, the image $q(Z(S))$ is a Sylow $3$-subgroup of $GL_2(3)$. Note that for any $g \in N$, the conjugate $gZ(S)g^{-1} \leq C_N(Q)$, and the images of these conjugates in the quotient group $N/Q$ are the Borel subgroups of $GL_2(3)$ and thus generate $2.A_4 = SL_2(3)$. Therefore there is an involution $z \in C_N(Q)$ which maps to the central involution of the matrix group in $N/Q$. Clearly $Q \leq C_G(z)$, and it also follows that $Z(S) \leq C_G(z)$. Thus $P \leq C_G(z)$ and the order of $P$ divides the order of $C_G(z)$, with $|P| = 3^7$ if $G = Fi_{22}$ and $|P| = 3^{10}$ if $G = Fi_{23}$. Each of these Fischer groups has three conjugacy classes $2A$, $2B$ and $2C$ of involutions. In $Fi_{22}$ their centralizers have orders $2^{16} \cdot 3^6 \cdot 5 \cdot 7 \cdot 11$, $2^{17} \cdot 3^4 \cdot 5$ and $2^{16}\cdot  3^3$ respectively. In $Fi_{23}$, the centralizers of involutions have orders $2^{18} \cdot 3^9 \cdot 5^2 \cdot 7 \cdot 11 \cdot 13$, $2^{18} \cdot 3^6 \cdot 5 \cdot 7 \cdot 11$ and $2^{18} \cdot 3^5 \cdot 5$. This is a contradiction to $P \leq C_G(z)$, and therefore $Q = O_C$ contains central elements of type $3B$.

The condition (N3) concerning the triple of Sylow subgroups is trivially satisfied in every case except one, namely $C(3A)$ in $Fi'_{24}$. In this case the stronger condition is satisfied: every central element of type $3B$ which lies in $C(3A) = 3 \times O_8^+(3):3$ actually lies in $3 \times O_8^+(3)$. This follows from the discussion in \cite[p. 83]{wi87} about a symplectic form defined using commutators and lifts to the group $3.Fi'_{24}$ in $M$. The lift of $3 \times O_8^+(3):3$ is $O_8^+(3):3^{1+2}$.

Theorems \ref{noncenthm} and \ref{defT} now imply the results of this Proposition. Note that for $G = HN, BM$ or $M$ the reduced Lefschetz module may have other nonprojective summands with vertex a cyclic group of order three.
\end{proof}

\newpage
\begin{center}
{\scriptsize
\begin{tabular}{|c|l|c|l|c|l|l|c|c|l|}
\hline
1&2&3 &4 &5 &6 &7 &8 &9 &10 \\
\hline
    & & & & & & & & & \\
$G$ & $C_G(t)$ &$H$& $\Dd_2(G)^t$&$V$&$VC_G(V)$&$N_G(V)$&$H_V$ & $\Dd_2(G)^V$ & Ref. \\
    & & & & & & & & & \\
\hline
\hline
& & & & & & & & & \\
$M_{12}$ & $C(2A)=2 \times A_5.2$ & $L_2(4)$ & $A_1$ & $2^2$ & $2^2 \times S_3$ &$A_4 \times S_3$&$L_2(2)$ & $A_1$&\cite{mgo5}\\
& & & & & & & & & \\
\hline
& & & & & & & & & \\
$J_2$ & $C(2B)=2^2 \times A_5$ &$L_2(4)$ & $A_1$ & $2^2$ & $2^2 \times A_5$ & $A_4 \times A_5$ &$L_2(4)$& $A_1$&\cite{mgo5}\\
& & & & & & & & & \\
\hline
& & & & & & & & & \\
$HS$ & $C(2B)=2 \times A_6.2^2$ &$Sp_4(2)$& $C_2$ & $2^2$&$2^2 \times 5:4$ & $2^2 \times 5:4$ & $Sz(2)$&$A_1$&\cite{mgo5}\\
& & & & & & & & & \\
\hline
& & & & & & & & & \\
$Ru$ & $C(2B)=2^2 \times Sz(8)$ &$Sz(8)$& $A_1$ & $2^2$&$2^2 \times Sz(8)$ &$(2^2 \times Sz(8)):3$ & $Sz(8)$&$A_1$&\cite{mgo5}\\
& & & & & & & & & \\
\hline
& & & & & & & & & \\
$Suz$ & $C(2B)=(2^2 \times L_3(4)):2$ &$L_3(4)$& $A_2$ & $D_8$&$D_8 \times 3^2:Q_8$& $D_8 \times 3^2:Q_8$ & $U_3(2)$&$C_1$&\cite{mgo5}\\
& & & & & & & & & \\
\hline
& & & & & & & & & \\
$Fi_{22}$ & $C(2A)=2^.U_6(2)$ &$U_6(2)$& $C_3$ & $2$&$2^.U_6(2)$& $2^.U_6(2)$ & $U_6(2)$&$C_3$& \cite{mgo4}\\
          & $C(2C)=2^{5+8}:(S_3 \times 3^2:4)$ & & contr. & && & & &\\
& & & & & & & & & \\
\hline
& & & & & & & & & \\
$He$ & $C(2A)=(2^2\;^. L_3(4)):2$ &$L_3(4)$& $A_2$ & $D_8$&$D_8 \times L_3(2)$ & $D_8 \times L_3(2)$ & $L_3(2)$& $A_2$& \cite{held69}\\
& & & & & & & & & \\
\hline
& & & & & & & & & \\
$Co_1$ & $C(2B)=(2^2 \times G_2(4)):2$ &$G_2(4)$ & $G_2$ & $D_8$& $D_8 \times G_2(2)$ & $D_8 \times G_2(2)$ & $G_2(2)$&$G_2$& \cite{sa99}\\
         & $C(2C)=2^{11}:M_{12}.2$ & & contr. & & & & &&\\
& & & & & & & & & \\
\hline
& & & & & & & & & \\
$BM$ & $C(2A)=2\;^.\;^2E_6(2):2$ &$^2E_6(2)$& $F_4$ & $2^2$&$2^2 \times F_4(2)$&$(2^2 \times F_4(2)).2$ &$F_4(2)$& $F_4$&\cite{y05a}\\
     & $C(2C)=(2^2 \times F_4(2)):2$ &$F_4(2)$& $F_4$ & $D_8$&$D_8 \times \;^2F_4(2)$&$D_8 \times \;^2F_4(2)$ &$^2F_4(2)$ &$I_2$&\cite{segev91}\\
     & $C(2D)=2^9.2^{16} O^+_8(2).2$ &&contr. &&&&&&\\
     & & & & & & & & & \\
\hline
\hline
& & & & & & & & & \\
$Fi'_{24}$ & $C(2A)=2^. Fi_{22}:2$ &$Fi_{22}$& $\Dd_2(Fi_{22})$ &$D_8$&$D_8 \times Sp_6(2)$&$D_8 \times Sp_6(2)$&$Sp_6(2)$&$C_3$&\cite{ky}\\
           & & & & $2^2$ & $2^2 \times O_8^+(2).3$ & $(A_4 \times O_8^+(2).3).2$ & $O_8^+(2)$ &$D_4$ & \\
& & & & & & & & & \\
\hline
& & & & & & & & & \\
$HN$ & $C(2A)=2^.HS:2$ &$HS$& $\Dd_2(HS)$ &$SD_{16}$&$SD_{16} \times 5:4$&$SD_{16} \times 5:4$ &$Sz(2)$&$A_1$& \cite{y02} \\
&&&&$2^2$&$2^2 \times A_8$&$(A_4 \times A_8).2$&$L_4(2)$&$A_3$&\cite{nortwils86}\\
&&&&&&&&&\\
\hline
& & & & & & & & & \\
$M$ & $C(2A)=2 ^. BM$ &$BM$& $\Dd_2(BM)$ &$SD_{16}$&$SD_{16} \times \;^2F_4(2)$&$SD_{16} \times \;^2F_4(2)$&$ \;^2F_4(2)$&$I_2$&\cite{y05a}\\
& & & & & & & & & \\
\hline
\hline
\end{tabular}}

\vspace*{.4cm}
{\it Table 6.1: Information for $\tL_G(\Dd_2(G); \mathbb{F}_2)$}
\end{center}

\newpage
\begin{center}
{\scriptsize
\begin{tabular}{|c|l|c|l|c|l|l|c|l|}
\hline
1&2&3 &4 &5 &6 &7 &8 &9  \\
\hline
& & & & & & & & \\
$G$ & $C_G(T)$  & $H$ & $\Dd_3(G)^T$ & $V$ & $VC_G(V)$ & $N_G(V)$ & $\Dd_3(G)^V$ & Ref. \\
& & & & & & & & \\
\hline
\hline
& & & & & & & & \\
$M_{12}$ & $C(3B)=3 \times A_4$ & $L_2(3)$ & $A_1$ & $3$ & $3 \times A_4$ &$S_3 \times A_4$ & $A_1$&\cite{mgo5}\\
& & & & & & & & \\
\hline
& & & & & & & & \\
$Co_3$ & $C(3B)=3^5:(2 \times A_5)$ & &contr.&&&&&\cite{mgo4}\\
& $C(3C)=3 \times L_2(8):3$ &$R(3)$ &$A_1$&$3$&$3 \times R(3)$ &$S_3 \times R(3)$ &$A_1$ &\\
& & & & & & & & \\
\hline
& & & & & & & & \\
$J_3$ & $C(3B)=3 \times A_6$ &$L_2(9)$ & $A_1$ & $3$ & $3 \times A_6$ & $(3 \times A_6).2$ & $A_1$&\cite{mgo5}\\
& & & & & & & & \\
\hline
& & & & & & & & \\
$Co_2$ & $C(3B)=3 \times Sp_4(3):2$ &$Sp_4(3)$& $C_2$ & $3$&$3 \times Sp_4(3):2$ &$S_3 \times Sp_4(3):2$ & $C_2$&\\
& & & & & & & & \\
\hline
& & & & & & & & \\
$Fi_{22}$ & $C(3A)=3 \times U_4(3):2$ &$U_4(3)$& $C_2$ & $3$&$3 \times U_4(3):2$& $S_3 \times U_4(3):2$ & $C_2$& \cite{ky}\\
& $C(3C)= 3^5:2(A_4 \times A_4).2$& & contr.& & & & &\cite{wi84}\\
& $C(3D)=3^3.3.3^2:2A_4$& &contr. & && & &\\
& & & & & & & & \\
\hline
& & & & & & & & \\
$Fi_{23}$ & $C(3A)= 3 \times O_7(3)$& $O_7(3)$& $B_3$ & $3$ &$3 \times O_7(3)$ & $S_3 \times O_7(3)$ & $B_3$& \cite{anb99}\\
& $C(3C)= 3^6:(2 \times O_5(3))$ && contr.&&&&& \cite{wi87} \\
& $C(3D)=[3^9].(2 \times 2A_4)$ && contr.&&&&& \\
& & & & & & & & \\
\hline
& & & & & & & & \\
$Fi'_{24}$ & $C(3A)=3 \times O^+_8(3):3 $ &$O^+_8(3)$& $D_4$ &$3^2$&$3^2 \times G_2(3)$&$(3^2:2 \times G_2(3)).2$&$G_2$&\cite{ky}\\
& $C(3C)=3^7 .2. U_4(3)$ &&contr.&&&&&\cite{wi87} \\
& $C(3D)=3^2.3^4.3^6.(A_4 \times 2A_4)$&&contr.&&&&&\\
& $C(3E)=3^2 \times G_2(3)$ &$G_2(3)$& $G_2$ & $3^2$&$3^2 \times G_2(3)$ &$(3^2: 2 \times G_2(3)).2$& $G_2$&\\
& & & & & & & & \\
\hline
& & & & & & & & \\
$Th$ & $C(3A)=3 \times G_2(3)$&$G_2(3)$& $G_2$ & $3$& $3 \times G_2(3)$& $(3 \times G_2(3)):2$ &$A_1$ & \cite{wil88b}\\
& $C(3C)=3 \times 3^4:2A_6$&& contr.& & &&&\\
& & & & & & & & \\
\hline
\hline
& & & & & & & & \\
$HN$ & $C(3A)=3 \times A_9$&$A_9$&$\Dd_3(A_9)$&$3^2$&$3^2 \times A_6$&$(3^2:4 \times A_6)2^2$ &$A_1$&\cite{y05b}\\
& & & & & & & & \\
\hline
& & & & & & & & \\

$BM$ & $C(3A)=3 \times Fi_{22}:2$&$Fi_{22}$& $\Dd_3(Fi_{22})$ & $3^2$& $3^2 \times U_4(3):2^2$&$(3^2:D_8 \times U_4(3):2^2).2$ & $C_2$&\cite{y05b}\\
& & & & & & & & \\
\hline
& & & & & & & & \\
$M$ & $C(3A)=3 ^. Fi'_{24}$ & $Fi'_{24}$& $\Dd_3(Fi'_{24})$& $3^{1+2}$& $3^{1+2} \times G_2(3)$&$(3^{1+2}:2^2 \times G_2(3)).2$ & $G_2$&\cite{anw10}\\
& $C(3C)=3 \times Th$ & $Th$& $\Dd_3(Th)$ & $3^2$& $3^2 \times G_2(3)$&$(3^{1+2}:2 \times G_2(3)).2$ & $G_2$&\cite{wil88a}\\
& & & & & & & & \\
\hline
\hline
\end{tabular}}

\vspace*{.4cm}
{\it Table 6.2: Information for $\tL_G(\Dd_3(G); \mathbb{F}_3)$}
\end{center}

\newpage
\bibliographystyle{amsalpha}
\bibliography{refer}
\end{document}